\documentclass{article}

\usepackage[utf8]{inputenc}
\usepackage[english]{babel}

\usepackage{geometry}
\geometry{a4paper,top=3cm,bottom=3cm,left=3cm,right=3cm}
\usepackage{fancyhdr}
\usepackage{titlesec}
\usepackage{indentfirst}
\usepackage{float}

\usepackage{amsmath}
\usepackage{amssymb}
\usepackage{mathrsfs}
\usepackage{amsthm}
\usepackage{amsfonts}

\usepackage[linktoc=all]{hyperref}
\usepackage{cleveref}

\usepackage{tikz}

\usepackage{array}
\newcolumntype{C}[1]{>{\centering\arraybackslash}p{#1}}


\theoremstyle{plain}
\newtheorem{thm}{Theorem}[section]
\newtheorem{prop}[thm]{Proposition}
\newtheorem{lem}[thm]{Lemma}
\newtheorem{cor}[thm]{Corollary}
\newtheorem{defn}[thm]{Definition}

\newtheorem{introthm}{Theorem}

\theoremstyle{remark}
\newtheorem{ex}[thm]{Example}
\newtheorem{rmk}[thm]{Remark}

\DeclareMathOperator{\rank}{rk}

\newcommand{\bA}{\mathbf{A}}

\newcommand{\bZ}{\mathbf{Z}}

\newcommand{\ba}{\mathbf{a}}
\newcommand{\bb}{\mathbf{b}}

\newcommand{\be}{\mathbf{e}}
\newcommand{\bh}{\mathbf{h}}
\newcommand{\bl}{\mathbf{l}}
\newcommand{\bm}{\mathbf{m}}
\newcommand{\bp}{\mathbf{p}}
\newcommand{\bq}{\mathbf{q}}
\newcommand{\br}{\mathbf{r}}
\newcommand{\bs}{\mathbf{s}}
\newcommand{\bt}{\mathbf{t}}
\newcommand{\bu}{\mathbf{u}}
\newcommand{\bv}{\mathbf{v}}
\newcommand{\bw}{\mathbf{w}}
\newcommand{\bx}{\mathbf{x}}
\newcommand{\by}{\mathbf{y}}
\newcommand{\bz}{\mathbf{z}}
\newcommand{\bepsilon}{\mathbf{\epsilon}}


\newcommand{\bbN}{\mathbb{N}}

\newcommand{\bbR}{\mathbb{R}}
\newcommand{\bbZ}{\mathbb{Z}}


\newcommand{\cG}{\mathcal{G}}

\newcommand{\cP}{\mathcal{P}}
\newcommand{\cS}{\mathcal{S}}


\newcommand{\rar}{\rightarrow}

\newcommand{\ol}[1]{\overline{#1}}
\newcommand{\ot}[1]{\widetilde{#1}}
\newcommand{\abs}[1]{\left|#1\right|}
\newcommand{\divides}{\bigm|}
\newcommand{\gen}[1]{\langle #1 \rangle}

\newcommand{\pA}{\mathbf{A}^+}
\newcommand{\supp}[1]{\mathrm{supp}(#1)}

\newcommand{\cnj}{\sim_{\mathrm{c}}}
\newcommand{\lecnj}{\preceq_{\mathrm{c}}}
\newcommand{\qcnj}{\sim_{\mathrm{qc}}}
\newcommand{\qcsupp}[1]{\mathrm{supp}_\mathrm{qc}(#1)}


\newcommand{\xdash}[1][3em]{\rule[0.5ex]{#1}{0.55pt}}
\newcommand{\edgedash}[1]{\ \xdash[#1]\ }
\newcommand{\edge}{\edgedash{0.6cm}}

\newcommand{\FG}[1]{\mathrm{FG}(#1)}

\title{On the isomorphism problem for generalized Baumslag-Solitar groups: angles}

\author{
Dario Ascari\\
{\small \textit{Department of Mathematics, University of the Basque Country,}}\\
{\small \textit{Barrio Sarriena, Leioa, 48940, Spain}}\\
{\small e-mail: \texttt{ascari.maths@gmail.com}}\\
\and
Montserrat Casals-Ruiz\\
{\small \textit{Ikerbasque - Basque Foundation for Science and Matematika Saila,}}\\
{\small \textit{UPV/EHU,  Sarriena s/n, 48940, Leioa - Bizkaia, Spain}}\\
{\small e-mail: \texttt{montsecasals@gmail.com}}\\
\and
Ilya Kazachkov\\
{\small \textit{Ikerbasque - Basque Foundation for Science and Matematika Saila,}}\\
{\small \textit{UPV/EHU,  Sarriena s/n, 48940, Leioa - Bizkaia, Spain}}\\
{\small e-mail: \texttt{ilya.kazachkov@gmail.com}}
}

\date{}

\begin{document}

\maketitle

\begin{abstract}
    We introduce a new isomorphism invariant for generalized Baumslag–Solitar (GBS) groups, which we call the limit angle. Unlike previously known invariants, which are primarily algebraic, the limit angle admits a dynamical interpretation, arising exclusively in the case of two interacting edges. This invariant captures subtle geometric behavior that does not manifest in configurations with more interacting edges. As an application, we use the limit angle to obtain a classification of GBS groups with one vertex and two edges.
\end{abstract}


\section{Introduction}

The isomorphism problem for generalized Baumslag–Solitar groups (GBSs) is a long-standing open question that has attracted considerable attention (see \cite{For06,Lev07,CF08,Dud17,CRKZ21,Wan25}). Interest in this problem stems from the theory of JSJ decompositions, which plays a central role in resolving the isomorphism problem for hyperbolic groups \cite{Sel95,DG11,DT19}. For those groups the JSJ decomposition is essentially unique and thus this naturally leads to the question of whether the JSJ decomposition can be useful to classify broader classes of groups that admit infinitely many different JSJ decompositions. In this context, GBS groups represent the simplest unsolved case. The significance of GBSs is further underscored by recent results of the authors \cite{ACK-iso1,ACK-out}, which provide evidence that the isomorphism problem and the structure of the outer automorphism groups for large families of groups can be reduced to the case of GBS groups.

In \cite{ACK-iso1} the authors initiated a systematic study of the isomorphism problem for GBS groups, see references therein for previously known results. They proved that for any two isomorphic GBS groups, there exists an explicit (computable) bound on the number of vertices and edges in the graph of groups decompositions appearing along a sequence of moves that realizes the isomorphism.

This line of research was continued in \cite{ACK-iso2}, where the authors introduced several new isomorphism invariants for GBS groups (the linear invariants, the set of rigid edges, and the assignment map) and showed that these invariants are sufficient to determine the isomorphism class within a broad family of GBS groups: those with one quasi-conjugacy class, full-support gaps, and at least three interacting edges. 

However, when only two edges interact with each other, the previously introduced invariants are no longer sufficient to fully characterize the isomorphism class. In this paper, we address this case by introducing a new invariant for GBS groups: the \emph{limit angle}. This invariant has a geometric interpretation that captures qualitatively new dynamics, one that arises exclusively in the case of two interacting edges. Furthermore, with the invariants introduced in \cite{ACK-iso2} and the limit angle, we are able to solve the isomorphism problem  for groups with one vertex and two edges:

\begin{introthm}[\Cref{cor:main}]\label{introthm:main}
    There is an algorithm that, given two GBS graphs $(\Gamma,\psi),(\Delta,\phi)$, where $(\Gamma,\psi)$ has one vertex and two edges, decides whether or not the corresponding GBS groups are isomorphic. 
\end{introthm}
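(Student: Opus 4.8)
The plan is to exhibit a finite tuple of \emph{computable} invariants that is \emph{complete} on the relevant class, so that the algorithm simply computes the tuple for each input and compares. Note that only $(\Gamma,\psi)$ is assumed to have one vertex and two edges, while $(\Delta,\phi)$ is arbitrary; the point is therefore to decide whether an arbitrary GBS graph presents a group isomorphic to a given one-vertex two-edge group. The backbone is the deformation-space framework: isomorphism of GBS groups is equivalent to connectedness under elementary moves (slides, inductions, and $\mathbf{A}^{\pm}$-type moves), and by the main result of \cite{ACK-iso1} the complexity of the graphs needed along such a path is computably bounded. This already provides one half of the algorithm for free — an effective search for a path certifying isomorphism — so the essential work is to produce a computable obstruction that certifies non-isomorphism.

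First I would extract the coarse structural data of $(\Gamma,\psi)$ from its labels $a_1,b_1,a_2,b_2$: the modular homomorphism, the partition of the two edges into quasi-conjugacy classes, and the set of rigid edges. This splits the analysis according to how the two edges relate. When the edges do not interact — they lie in distinct quasi-conjugacy classes, or one is rigid or degenerate (for instance $a_i=b_i$, producing a direct $\bbZ$ factor) — the group decomposes in a controlled way, and the invariants already available from \cite{ACK-iso2}, namely the linear invariants and the assignment map, pin down the isomorphism type; their computability is immediate from their explicit definitions, and one checks directly that they agree on $(\Gamma,\psi)$ and $(\Delta,\phi)$ exactly when the groups coincide.

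The crux is the single remaining case: the two edges interact and form one quasi-conjugacy class. This is precisely the regime where the invariants of \cite{ACK-iso2} fail to separate classes, and where the limit angle enters. Here the plan is threefold: (i) realize the limit angle as the limit of an explicitly computable sequence of geometric quantities attached to the iterated conjugates of the vertex generator under the two stable letters, and show that this limit lies in an effectively comparable set; (ii) verify its invariance under the elementary moves, so that it is a genuine isomorphism invariant; and (iii) prove that two interacting groups with matching linear, rigid, and assignment data \emph{and} matching limit angle are isomorphic. Part (iii) is the completeness statement and requires reconstructing an explicit sequence of moves from the matching data; here the dynamical meaning of the limit angle is what closes the gap, since equal angles force the two configurations to share the same asymptotic geometry, which can then be promoted to an actual isomorphism.

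Assembling these pieces yields the algorithm: normalize both inputs, compare the coarse data to select the case, and in each case compute and compare the finite invariant tuple — including the limit angle in the interacting case. Decidability follows because every invariant is computable and the tuple is complete on this class. I expect the main obstacle to be precisely part (iii) together with the effective comparability of the limit angle: one must guarantee that the limiting quantity can be compared \emph{exactly} between the two groups rather than merely approximated, which is exactly what makes the two-edge case substantively harder than the $\ge 3$-edge case resolved in \cite{ACK-iso2}.
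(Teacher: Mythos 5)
Your proposal has the right overall shape --- reduce to moves via \cite{ACK-iso1}, dispose of the non-interacting cases with the invariants of \cite{ACK-iso2}, and handle the single interacting quasi-conjugacy class with a new asymptotic invariant --- and this matches the paper's architecture. But the two points you defer, namely (i) exact computability of the limit angle and (iii) completeness of the invariant, are precisely where all of the mathematical content lives, and nothing in the proposal substitutes for the mechanism the paper uses to establish them. Concretely, the paper organizes the configurations reachable by slides into (subsets of) rooted binary trees, shows that connection moves only need to be performed at roots and send roots to roots, and then proves the key structure theorem (Proposition 3.11): the resulting bi-infinite sequence of root vectors satisfies a three-term recursion $\bv_{i+2}=k\bv_{i+1}-\bv_i$ with $k\ge 2$ minimal, and on each side it either terminates or becomes an \emph{exact arithmetic progression} after a computable finite prefix. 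This is what makes the limit direction $\bl^{+}=\bv_{i+1}-\bv_i$ an exactly computable element of $\bA$ rather than a mere limit, and it simultaneously gives the algorithm: compute the finite segment plus the two progressions, then test whether the root of $(\Delta,\phi)$ is a member. Your step (i), phrased as taking a limit of ``geometric quantities attached to iterated conjugates,'' does not produce an exactly comparable object without this discreteness result, and your step (iii) (equal angles force a connecting sequence of moves) is the content of the paper's Theorem 4.7, proved by an explicit Euclidean-algorithm descent inside the angle (Proposition 4.6) --- you acknowledge both as obstacles but leave them open.

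Two further gaps that a correct algorithm must close: the case analysis is finer than ``interacting vs.\ not'' --- the paper splits on the number of minimal regions of the quasi-conjugacy class (four and three lead to contradictions or finitely many reachable graphs; one reduces to the controlled case of \cite{ACK-iso1}; two is the hard case), and within the hard case it must separately treat degenerate configurations, \emph{twin roots} (where the root is not unique and no connection is possible), and non-rank-$2$ configurations where $\gen{\bx_1,\bx_2}\cong\bbZ$ and the ``angle'' degenerates. Finally, passing from the move-equivalence statement to the isomorphism statement requires handling induction moves at the start of the sequence; the paper does this in the proof of Corollary 3.13 by an explicit divisibility/support criterion ($\bw=\bw'$, $\qcsupp{\bb-\bb'}\subseteq\qcsupp{\bw}$, $\bx-\bx'\in\gen{\bw}$), which your normalization step does not address.
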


We believe that this invariant, together with the main result of \Cref{introthm:main}, will play a crucial role in solving the isomorphism problem for GBS groups.

\bigskip

\paragraph{Description of the invariant.} We consider one-vertex two-edges GBS graphs $(\Gamma,\psi)$ with two minimal points. For each such GBS graph (with few exceptions, see \Cref{rmk:classification-roots-without-limit-directions}), we define two limit directions $\bl^-,\bl^+\in \bbR^2$ (\Cref{def:limit-directions}). The positive cone $P_{(\Gamma,\psi)}:=\{\lambda \bl^-+\mu\bl^+ : \lambda,\mu\in[0,+\infty)\}\subseteq \bbR^2$ in the Euclidean plane is called the \textit{limit angle} of $(\Gamma,\psi)$.

We show that the limit angle, when considered alongside a fixed family of linear invariants and an assignment map, completely determines the isomorphism type of the GBS group.

\begin{introthm}[\Cref{thm:limit-angles}]\label{introthm:limit-angles}
    Let $(\Gamma,\psi),(\Delta,\phi)$ be two one-vertex two-edges GBS graphs with two minimal points, and the same linear invariants and assignment map. Suppose that the limit angles $P_{(\Gamma,\psi)},P_{(\Delta,\phi)}$ are defined. Then the following are equivalent:
    \begin{enumerate}
        \item Then the corresponding GBS groups are isomorphic.
        \item The limit angles coincide {\rm(}i.e. $P_{(\Gamma,\psi)}=P_{(\Delta,\phi)}${\rm)}.
        \item The vectors defined by the edges of $\Delta$ in the affine representation {\rm(}based at the origin{\rm)} lie inside the limit angle $P_{(\Gamma, \psi)}$.
    \end{enumerate}
\end{introthm}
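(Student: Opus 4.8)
The plan is to prove the three conditions equivalent via the cycle $(1)\Rightarrow(2)\Rightarrow(3)\Rightarrow(1)$, with almost all of the work concentrated in $(1)\Rightarrow(2)$ and $(3)\Rightarrow(1)$. Throughout I would rely on the fact, going back to the deformation theory of GBS groups, that an isomorphism between two GBS groups is realized by a finite sequence of elementary moves between their graph-of-groups decompositions, and that fixing the linear invariants and the assignment map cuts this down to a prescribed class of moves. The first task is therefore to write down, for a one-vertex two-edges graph with two minimal points, exactly how each such move acts on the two edge vectors in the affine representation. I expect each move to act by an explicit integral transformation of $\bbR^2$, and the two-minimal-points hypothesis to be precisely what guarantees that iterating these transformations produces \emph{two} distinct accumulation directions $\bl^-,\bl^+$ (rather than a single one or none), so that the cone $P_{(\Gamma,\psi)}$ is genuinely two-dimensional whenever the limit directions are defined.

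With this dictionary in hand, $(2)\Rightarrow(3)$ should be essentially immediate once one records the auxiliary fact that the edge vectors of a graph always lie inside its own limit angle: since the limit directions are defined as accumulation directions of the orbit of the edge vectors under the move dynamics, the vectors themselves sit inside the cone they span. Granting this, if $P_{(\Gamma,\psi)}=P_{(\Delta,\phi)}$ then the edges of $\Delta$ lie in $P_{(\Delta,\phi)}=P_{(\Gamma,\psi)}$, which is exactly $(3)$. For $(1)\Rightarrow(2)$ the content is the invariance of the cone under the moves: because the two graphs share linear invariants and assignment map, the isomorphism is realized by the restricted moves, and the key computation is that these act on the affine representation by transformations fixing the two limit rays (shear/unipotent type with $\bl^\pm$ as fixed directions, or translations along the boundary rays). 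Hence $\bl^-$ and $\bl^+$, and therefore the cone they bound, are unchanged, giving $P_{(\Gamma,\psi)}=P_{(\Delta,\phi)}$.

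The substantive implication is $(3)\Rightarrow(1)$. Here I would first upgrade the one-sided condition $(3)$ to the symmetric equality $(2)$: since $(\Gamma,\psi)$ and $(\Delta,\phi)$ carry the \emph{same} dynamical data (equal linear invariants and assignment map generate the same move transformations), the limit directions computed from $\Delta$'s edges agree with those computed from $\Gamma$'s whenever $\Delta$'s edges already lie inside $P_{(\Gamma,\psi)}$, forcing $P_{(\Delta,\phi)}=P_{(\Gamma,\psi)}$. It then remains to construct an explicit sequence of moves carrying $(\Gamma,\psi)$ to $(\Delta,\phi)$. The mechanism is that the reachable edge configurations from $(\Gamma,\psi)$ are exactly the admissible integral configurations lying inside $P_{(\Gamma,\psi)}$; a continued-fraction / Euclidean-algorithm argument on the integer labels lets the moves transport $\Gamma$'s edge vectors to any prescribed in-cone position with the correct invariants, in particular to those of $\Delta$. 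Matching the residual combinatorial data, which is pinned down by the shared assignment map, then yields the isomorphism.

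The hard part will be the reachability claim inside $(3)\Rightarrow(1)$: proving that the moves realize \emph{every} admissible lattice configuration inside the cone, rather than merely approaching the rays $\bl^\pm$ asymptotically. This demands careful control of the integer labels along the approach to the limit directions, together with separate attention to the boundary rays versus the interior of the cone, and to the degenerate configurations excluded in \Cref{rmk:classification-roots-without-limit-directions} where the limit directions are not defined. Establishing that the common dynamics genuinely forces the limits computed from $\Delta$'s in-cone edges to coincide with $\Gamma$'s is the step that closes the loop, and it is where I expect the main technical effort to lie.
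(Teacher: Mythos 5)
Your plan is a faithful sketch of the paper's strategy in outline, but the two places where you yourself locate the difficulty are exactly the places where the argument is missing, and one intermediate mechanism is misstated. The substantive implication $(3)\Rightarrow(1)$ is carried in the paper by a chain of structural results that your proposal names only as ``a continued-fraction / Euclidean-algorithm argument'': first the organization of all configurations reachable by slides into (subsets of) rooted binary trees with essentially unique roots (\Cref{prop:unique-roots}, \Cref{prop:binary-trees}), then the reduction of all connection moves to connections performed at roots (\Cref{lem:connection-above}, \Cref{lem:connection-below}, \Cref{lem:jump-to-root}), producing the bi-infinite \emph{sequence of roots} with its recursion $\bv_{i+2}=k\bv_{i+1}-\bv_i$ and eventual arithmetic-progression behaviour (\Cref{prop:sequence-of-roots}). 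The Euclidean algorithm appears only at the last stage (\Cref{prop:angles}), and it characterizes reachability \emph{within a single binary tree}, i.e.\ membership in the angle spanned by one root's vectors, not in the limit angle. Bridging from ``$\by_1,\by_2$ lie in the cone of $\bl^-,\bl^+$'' down to ``$\by_1,\by_2$ lie in the angle of some single reachable full-tree configuration'' is a separate convexity argument (the two Observations in the paper's proof of \Cref{thm:limit-angles}, together with $\bv_j/j\rar\bl^+$); without it the Euclidean algorithm has nothing to act on. None of this is supplied in your proposal, and it is the actual content of the theorem.

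Two further points. For $(1)\Rightarrow(2)$ you assert that the moves act by ``transformations fixing the two limit rays''; this is not how the invariance arises. An individual slide strictly narrows the angle of the current configuration and a connection rotates it past one of its old boundary rays; neither preserves $\bl^\pm$ as fixed directions of a linear map. What is invariant is the sequence of roots attached to the configuration (via uniqueness of roots and \Cref{lem:jump-to-root}), and $\bl^\pm$ are defined from that sequence. Finally, your proposed upgrade of $(3)$ to $(2)$ (``the common dynamics forces the limits computed from $\Delta$'s in-cone edges to coincide with $\Gamma$'s'') is, in the paper, the statement that distinct limit angles have disjoint interiors (\Cref{prop:limit-angles-disjoint}), and its proof there is a \emph{consequence} of the reachability theorem, not an input to it; taking it as a stepping stone toward $(3)\Rightarrow(1)$ without an independent argument is circular. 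As written, the proposal is an accurate roadmap of where the work lies, but it does not close either of the gaps it identifies.
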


\begin{ex}
    Let us consider the GBS graph $(\Gamma,\psi)$ of \Cref{fig:introlimitangles}. In this example, one can compute the limit directions, which are given by $\bl^-=(2,5)$ and $\bl^+=(5,2)$, see \Cref{fig:introlimitangles2}.
    
    Consider the GBS graph $(\Delta_1,\phi_1)$, with two loops labeled by $3^3, 2^{9}3^{8}$ and $2^3, 2^{17}3^{7}$ respectively. It is easy to check that $(\Delta_1,\phi_1)$ has the same linear invariants and assignment map as $(\Gamma,\psi)$. Moreover, the two edges of $(\Delta_1,\phi_1)$ define the two vectors $(9,5)$ and $(14,7)$, which fall inside the limit angle $P_{(\Gamma,\psi)}$, as $\frac{2}{5}<\frac{9}{5}< \frac{14}{7}< \frac{5}{2}$. Therefore, we deduce from Theorem \ref{introthm:limit-angles} that $(\Delta_1,\phi_1)$ is isomorphic to $(\Gamma,\psi)$.

    Consider now the GBS graph $(\Delta_2,\phi_2)$, with two loops labeled by $3^3, 2^{6}3^{4}$ and $2^3,2^{28}3^{3}$ respectively. Once again $(\Delta_2,\phi_2)$ has the same linear invariants and assignment map as $(\Gamma,\psi)$. In this case, the two edges of $(\Delta_2,\phi_2)$ define the two vectors $(6,1)$ and $(25,3)$, which fall outside the limit angle $P_{(\Gamma,\psi)}$, as $\frac{2}{5}< \frac{5}{2}<\frac{6}{1}< \frac{25}{3}$. Therefore, from Theorem \ref{introthm:limit-angles} we conclude that $(\Delta_2,\phi_2)$ is not isomorphic to $(\Gamma,\psi)$.

    In fact, among groups with the same linear invariants and assignment map, there are infinitely many possible limit angles, each of them corresponding to an isomorphism class of GBS groups, see \Cref{fig:introlimitangles3}. We refer the reader to \Cref{ex7} for further details.
\end{ex}

\begin{figure}[H]
\centering
\includegraphics[width=\textwidth]{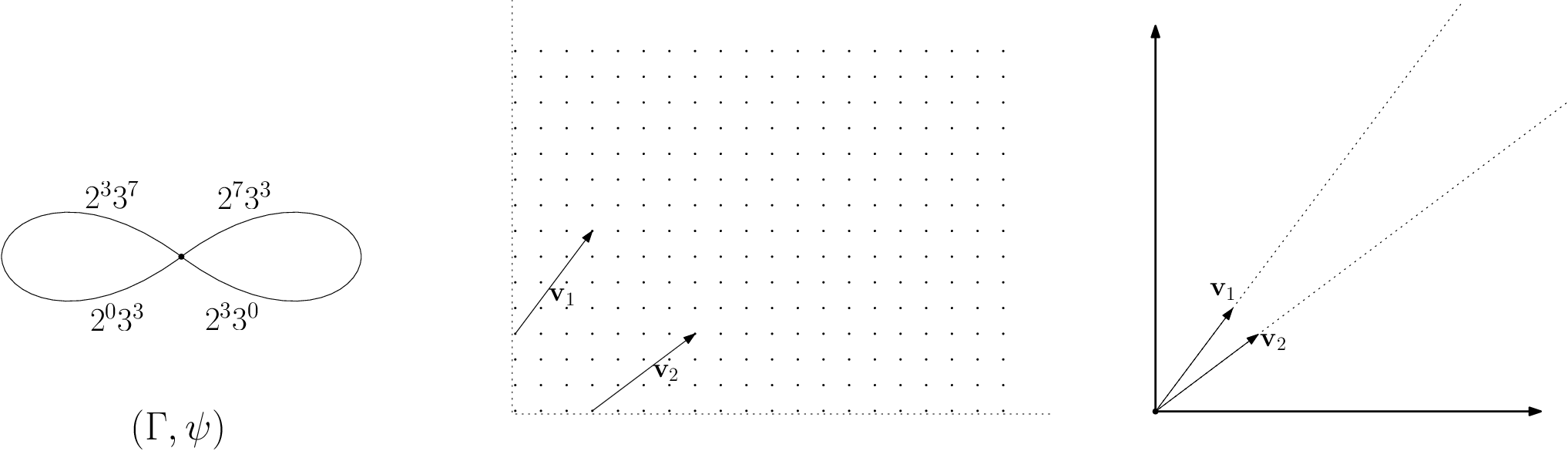}
\centering
\caption{From left to right: the GBS graph $(\Gamma,\psi)$, its affine representation, the two vectors at the origin of the Euclidean plane $\bbR^2$. Note that the picture on the right is not supposed to represent a GBS (but it is useful to emphasize the angle defined by the two vectors).}
\label{fig:introlimitangles}
\end{figure}

\begin{figure}[H]
\centering
\includegraphics[width=0.4\textwidth]{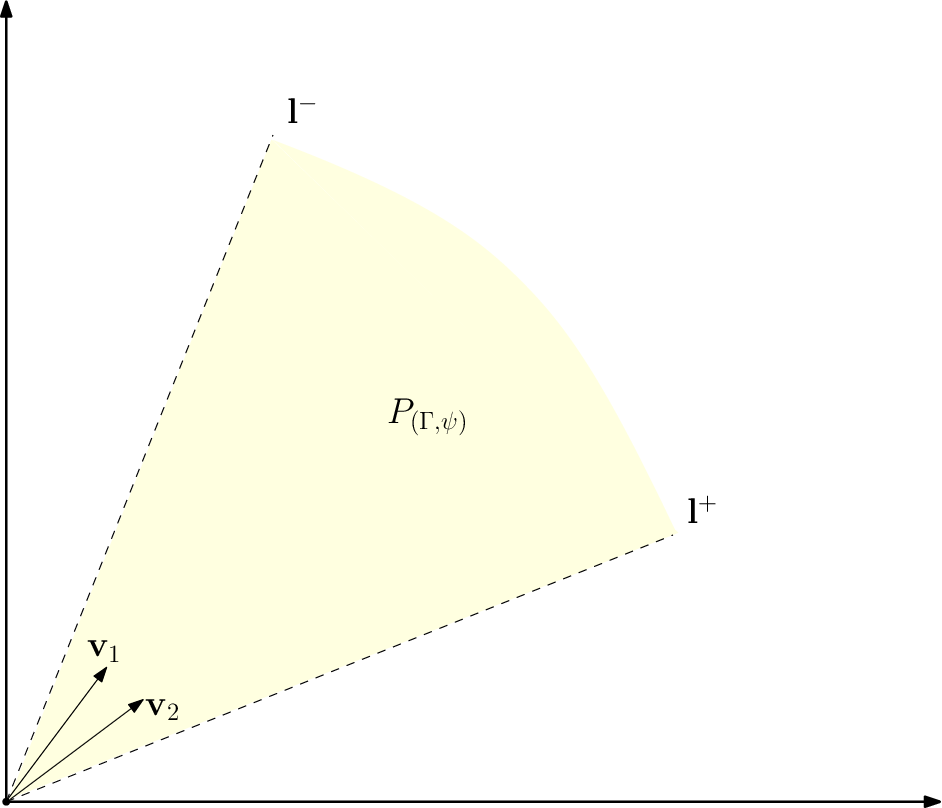}
\centering
\caption{The limit directions $\bl^-,\bl^+$ and the positive cone $P_{(\Gamma,\psi)}$. The two vectors $\bv_1,\bv_2$ will always fall inside the positive cone of their own GBS graph. In order to check isomorphism with another GBS graph $(\Delta,\phi)$, we must computed the vectors associated with $(\Delta,\phi)$, and check whether they fall inside the positive cone $P_{(\Gamma,\psi)}$.}
\label{fig:introlimitangles2}
\end{figure}

A natural question then arises: how many distinct limit angles can occur among GBS groups that have the same linear invariants? In general, this set is infinite, that is, the limit angle invariant can distinguish infinitely many isomorphism classes of groups with the same linear algebra invariants and assignment map. However, the structure of the limit angles given a fixed set of linear invariants is organized as a finite union of arithmetic progressions. In \Cref{introthm:limit-directions-discrete}, we provide a complete description of the set of all possible limit angles. 

\begin{introthm}[\Cref{thm:limit-directions-discrete}]\label{introthm:limit-directions-discrete}
    Let us fix the set of linear invariants and assignment map of a non-degenerate {\rm(}non-exceptional{\rm)} GBS graph with one vertex and two edges. Then, 
    \begin{enumerate}
        \item The set of directions that can be realized as limit directions is a finite union of arithmetic progressions.
        \item This finite set of arithmetic progressions can be computed algorithmically. 
        \item If $P$ is the open cone of all vectors with strictly positive components, then the set of limit directions has no accumulation point in the interior of $P$ {\rm(}i.e. it is finite inside any closed sub-cone of $P${\rm)}.
    \end{enumerate}
\end{introthm}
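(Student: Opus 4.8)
The strategy is to make the dependence of the two limit directions on the underlying arithmetic data completely explicit, and then to let that data range over all one-vertex two-edges graphs compatible with the prescribed linear invariants and assignment map. First I would recall from \Cref{def:limit-directions} the exact recipe that produces $\bl^-$ and $\bl^+$ from the two edge vectors in the affine representation together with the two minimal points. The role of fixing the linear invariants and the assignment map is that they pin down the two edges up to a purely integral \emph{sliding} freedom: up to finitely many discrete choices (which edge plays which role, together with the relevant sign and root data, and excluding the exceptional cases of \Cref{rmk:classification-roots-without-limit-directions}), every admissible graph is obtained from a fixed base graph by iterating the elementary move that slides one edge along the other. I would therefore begin by showing that the admissible graphs fall into finitely many families, each carrying an integral parameter $k\in\bbZ$ so that the resulting edge vectors, and hence the data entering the computation of the limit directions, are affine-linear in $k$.

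The core of the argument is then to feed this parametrization into the recipe for the limit directions and to prove that each of $\bl^-$ and $\bl^+$ is itself affine-linear in the parameter governing it, that is, of the form $\bl_0+k\,\bd$ for fixed integer vectors $\bl_0,\bd$ depending only on the family. Granting this, the realizable values of $\bl^-$ (and likewise of $\bl^+$) constitute finitely many arithmetic progressions of vectors, and their union is exactly the set of directions realizable as limit directions; this gives part (1). Part (2) follows at once, since the base graph, the finite list of families, and the vectors $\bl_0,\bd$ are all extracted from the linear invariants and the assignment map by one and the same finite procedure. I expect the genuine difficulty to sit precisely at this step: the limit directions are defined through a limiting (dynamical) process, so one must check that this limit depends \emph{linearly} on the sliding parameter, rather than through a nonlinear fixed-point equation, and that the error terms in the limit can be controlled uniformly in $k$; this is where the special geometry of exactly two interacting edges is used.

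For part (3) I would examine the common-difference vector $\bd$ of each infinite progression. Geometrically, sliding one edge further along the other drives the extremal direction it controls toward one of the two coordinate axes, the relevant axis being dictated by the prime singled out by the assignment map; concretely I would show that $\bd$ is axis-parallel, i.e.\ has a vanishing coordinate. Consequently the direction of $\bl_0+k\,\bd$ converges as $k\to\infty$ to a ray in $\partial P$, so every closed sub-cone of the interior of the positive cone $P$ meets each progression in only finitely many terms. As there are finitely many progressions, the set of limit directions is finite inside any such closed sub-cone and can accumulate only on $\partial P$, which is part (3).
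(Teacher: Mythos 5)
Your plan rests on a parametrization that is not available, and it leaves the actual content of the theorem unproven. You propose to organize all admissible graphs into finitely many one\nobreakdash-parameter families indexed by an integer $k$ (a ``sliding freedom'') and then to show that each limit direction is affine\nobreakdash-linear in $k$. But slides (and connections) preserve the isomorphism class, and the limit angle is precisely the invariant separating the infinitely many isomorphism classes with the same linear invariants; so the admissible configurations do not form finitely many slide\nobreakdash-orbits. They form, in effect, a two\nobreakdash-dimensional discrete family: all pairs $\by_1,\by_2$ generating $\gen{\bh_1,\bh_2}\cong\bbZ^2$ subject to positivity constraints. You give no argument for why this collapses to finitely many $\bbZ$\nobreakdash-families, and the affine\nobreakdash-linearity of the limit direction in the putative parameter --- which you yourself flag as ``the genuine difficulty'' --- is exactly the assertion of part (1) in disguise; asserting that ``the error terms in the limit can be controlled uniformly in $k$'' is not a proof.

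The missing idea is the static characterization of realizable limit directions (\Cref{prop:realizing-limit-directions}): an element $\bl$ which is part of a basis of $\gen{\bh_1,\bh_2}$ and has all components strictly positive is realizable as a limit direction if and only if $\bl\not\ge\abs{\ba_1-\ba_2}$ componentwise. This eliminates the dynamics entirely. Once you have it, the arithmetic\nobreakdash-progression structure comes from elementary number theory: writing $\bl=x\bh_1+y\bh_2$ with $x,y$ coprime, the failing coordinate $p_0$ forces $0<xa+yb<c$ where $a,b,c$ are the $p_0$\nobreakdash-components of $\bh_1,\bh_2,\ba_1-\ba_2$, so $xa+yb=d$ for finitely many values of $d$; for each $d$ the solution set of this linear Diophantine equation is an arithmetic progression in $(x,y)$, the coprimality condition is periodic modulo $d$ along it, and intersecting with the positivity constraints truncates it. This is also what makes part (3) honest: the common difference of each progression has vanishing $p_0$\nobreakdash-component (all its terms share the value $d$), so the progression can only accumulate on $\partial P$ --- or, as the paper argues, an interior accumulation direction would itself be realizable and would swallow all nearby terms into a single isomorphism class, a contradiction. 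Your part (3) heuristic points in the right direction but inherits the gap, since it justifies the axis\nobreakdash-parallel common difference by appeal to the same unestablished sliding parametrization.
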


\begin{figure}[H]
\centering
\includegraphics[width=0.6\textwidth]{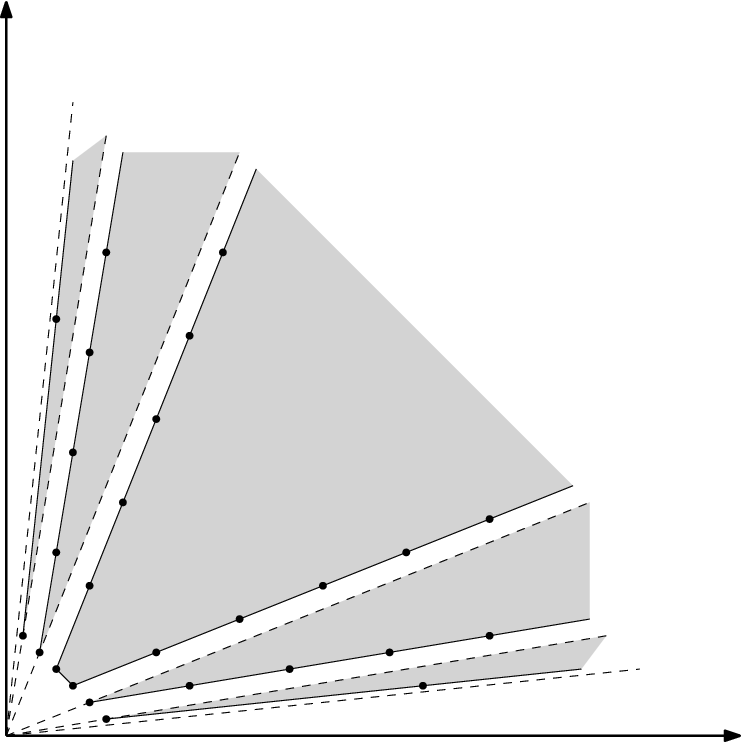}
\centering
\caption{Fixed the linear invariants and the assignment map, there are usually infinitely many possible limit angles. In the picture, we represent finitely many of them: each gray region corresponds to an isomorphism class of GBS groups.}
\label{fig:introlimitangles3}
\end{figure}

\subsection*{Acknowledgements}

This work was supported by the Basque Government grant IT1483-22. The second author was supported by the Spanish Government grant PID2020-117281GB-I00, partly by the European Regional Development Fund (ERDF), the MICIU /AEI /10.13039/501100011033 / UE grant PCI2024-155053-2.

\section{GBS graphs and quasi-conjugacy classes}

In this section we set up the notation, and we review some notions from \cite{ACK-iso1}. 

\subsection{Graphs of groups}

We consider graphs as combinatorial objects, following the notation of \cite{Ser77}. A \textbf{graph} is a quadruple $\Gamma=(V,E,\ol{\cdot},\iota)$ consisting of a set $V=V(\Gamma)$ of \textit{vertices}, a set $E=E(\Gamma)$ of \textit{edges}, a map $\ol{\cdot}:E\rar E$ called \textit{reverse} and a map $\iota:E\rar V$ called \textit{initial endpoint}; we require that, for every edge $e\in E$, we have $\ol{e}\not=e$ and $\ol{\ol{e}}=e$. For an edge $e\in E$, we denote with $\tau(e)=\iota(\ol{e})$ the \textit{terminal endpoint} of $e$. A \textbf{path} in a graph $\Gamma$, with \textit{initial endpoint} $v\in V(\Gamma)$ and \textit{terminal endpoint} $v'\in V(\Gamma)$, is a sequence $\sigma=(e_1,\dots,e_\ell)$ of edges $e_1,\dots,e_\ell\in E(\Gamma)$ for some integer $\ell\ge0$, with the conditions $\iota(e_1)=v$ and $\tau(e_\ell)=v'$ and $\tau(e_i)=\iota(e_{i+1})$ for $i=1,\dots,\ell-1$. A graph is \textbf{connected} if for every pair of vertices, there is a path going from one to the other. For a connected graph $\Gamma$, we define its \textbf{rank} $\rank{\Gamma}\in\bbN\cup\{+\infty\}$ as the rank of its fundamental group (which is a free group).

\begin{defn}
A \textbf{graph of groups} is a quadruple
$$\cG=(\Gamma,\{G_v\}_{v\in V(\Gamma)},\{G_e\}_{e\in E(\Gamma)},\{\psi_e\}_{e\in E(\Gamma)})$$
consisting of a connected graph $\Gamma$, a group $G_v$ for each vertex $v\in V(\Gamma)$, a group $G_e$ for every edge $e\in E(\Gamma)$ with the condition $G_e=G_{\ol{e}}$, and an injective homomorphism $\psi_e:G_e\rar G_{\tau(e)}$ for every edge $e\in E(\Gamma)$.
\end{defn}

Let $\cG=(\Gamma,\{G_v\}_{v\in V(\Gamma)},\{G_e\}_{e\in E(\Gamma)},\{\psi_e\}_{e\in E(\Gamma)})$ be a graph of groups. Define the \textbf{universal group} $\FG{\cG}$ as the quotient of the free product $(*_{v\in V(\Gamma)}G_v)*F(E(\Gamma))$ by the relations
\begin{equation*}\label{FGrelations}
\ol{e}=e^{-1} \qquad\qquad \psi_{\ol{e}}(g)\cdot e=e\cdot\psi_e(g)
\end{equation*}
for $e\in E(\Gamma)$ and $g\in G_e$.

Define the \textbf{fundamental group} $\pi_1(\cG,\ot{v})$ of a graph of group $\cG$ with basepoint $\ot{v}\in V(\Gamma)$ to be the subgroup of $\FG{\cG}$ of the elements that can be represented by words such that, when going along the word, we read a path in the graph $\Gamma$ from $\ot v$ to $\ot v$. The fundamental group $\pi_1(\cG,\ot v)$ does not depend on the chosen basepoint $\ot v$, up to isomorphism. 

\subsection{Generalized Baumslag-Solitar groups}

\begin{defn}
A \textbf{GBS graph of groups} is a finite graph of groups
$$\cG=(\Gamma,\{G_v\}_{v\in V(\Gamma)},\{G_e\}_{e\in E(\Gamma)},\{\psi_e\}_{e\in E(\Gamma)})$$
such that each vertex group and each edge group is $\bbZ$.
\end{defn}

A \textbf{Generalized Baumslag-Solitar group} is a group $G$ isomorphic to the fundamental group of some GBS graph of groups.

\begin{defn}
A \textbf{GBS graph} is a pair $(\Gamma,\psi)$ where $\Gamma$ is a finite graph and $\psi:E(\Gamma)\rar\bbZ\setminus\{0\}$ is a function.
\end{defn}

Given a GBS graph of groups $\cG=(\Gamma,\{G_v\}_{v\in V(\Gamma)},\{G_e\}_{e\in E(\Gamma)},\{\psi_e\}_{e\in E(\Gamma)})$, the map $\psi_e:G_e\rar G_{\tau(e)}$ is an injective homomorphism $\psi_e:\bbZ\rar\bbZ$, and thus coincides with multiplication by a unique non-zero integer $\psi(e)\in\bbZ\setminus\{0\}$. We define the GBS graph associated to $\cG$ as $(\Gamma,\psi)$ associating to each edge $e$ the factor $\psi(e)$ characterizing the homomorphism $\psi_e$.
Giving a GBS graph of groups is equivalent to giving the corresponding GBS graph. In fact, the numbers on the edges are sufficient to reconstruct the injective homomorphisms and thus the graph of groups.

\subsection{Reduced affine representation of a GBS graph}

\begin{defn}\label{def:set-of-primes}
    For a GBS graph $(\Gamma,\psi)$, define its \textbf{set of primes}
    $$\cP(\Gamma,\psi):=\{r\in\bbN \text{ prime } : r\divides \psi(e) \text{ for some } e\in E(\Gamma)\}$$
\end{defn}

Given a GBS graph $(\Gamma,\psi)$, consider the finitely generated abelian group
$$\bA:=\bbZ/2\bbZ\oplus\bigoplus\limits_{r\in\cP(\Gamma,\psi)}\bbZ.$$
We denote with $\mathbf{0}\in\bA$ the neutral element. For an element $\ba=(a_0,a_r : r\in\cP(\Gamma,\psi))\in\bA$ (with $a_0\in\bbZ/2\bbZ$ and $a_r\in\bbZ$ for $r\in\cP(\Gamma,\psi)$), we denote $\ba\ge\mathbf{0}$ if $a_r\ge 0$ for all $r\in\cP(\Gamma,\psi)$; notice that we are not requiring any condition on $a_0$. We define the positive cone $\pA:=\{\ba\in\bA : \ba\ge\mathbf{0}\}$.

\begin{defn}\label{def:affine-representation}
Let $(\Gamma,\psi)$ be a GBS graph. Define its \textbf{{\rm(}reduced{\rm)} affine representation} to be the graph $\Lambda=\Lambda(\Gamma,\psi)$ given by:
\begin{enumerate}
\item $V(\Lambda)=V(\Gamma)\times\pA$ is the disjoint union of copies of $\pA$, one for each vertex of $\Gamma$.
\item $E(\Lambda)=E(\Gamma)$ is the same set of edges as $\Gamma$, and with the same reverse map.
\item For an edge $e\in E(\Lambda)$ we write the unique factorization $\psi(e)=(-1)^{a_0}\prod_{r\in\cP(\Gamma,\psi)}r^{a_r}$ and we define the terminal endpoint $\tau_\Lambda(e)=(\tau_\Gamma(e),(a_0,a_r,\dots))$.
\end{enumerate}
For a vertex $v\in V(\Gamma)$ we denote $\pA_v:=\{v\}\times\pA$ the corresponding copy of $\pA$.
\end{defn}

If $\Lambda$ contains an edge going from $p$ to $q$, then we denote $p\edge q$. If $\Lambda$ contains edges from $p_i$ to $q_i$ for $i=1,\dots,m$, then we denote
$$
\begin{cases}
p_1\edge q_1\\
\cdots\\
p_m\edge q_m
\end{cases}
$$
This does not mean that $p_1\edge q_1,\dots,p_m\edge q_m$ are all the edges of $\Lambda$, but only that in a certain situation we are focusing on those edges. If we are focusing on a specific copy $\pA_v$ of $\pA$, for some $v\in V(\Gamma)$, and we have edges $(v,\ba_i)\edge (v,\bb_i)$ for $i=1,\dots,m$, then we say that $\pA_v$ contains edges
$$
\begin{cases}
\ba_1\edge \bb_1\\
\cdots\\
\ba_m\edge \bb_m
\end{cases}
$$
omitting the vertex $v$.

\subsection{Support and control of vectors}

The following notions for elements of $\bA$ will be widely used along the paper.

\begin{defn}\label{def:support}
For $\bx\in\bA$ define its \textbf{support} as the set
$$\supp{\bx}:=\{r\in\cP(\Gamma,\psi) : x_r\not=0\}$$
\end{defn}
\begin{rmk}
Note that we omit the $\bbZ/2\bbZ$ component from the definition of support.
\end{rmk}

\begin{defn}\label{def:control-a}
Let $\ba,\bb,\bw\in\pA$. We say that $\ba,\bw$ \textbf{controls} $\bb$ if any of the following equivalent conditions holds:
\begin{enumerate}
\item We have $\ba\le \bb\le \ba+k\bw$ for some $k\in\bbN$.
\item We have $\bb-\ba\ge\mathbf{0}$ and $\supp{\bb-\ba}\subseteq\supp{\bw}$.
\end{enumerate}
\end{defn}

\subsection{Affine paths and conjugacy classes}\label{sec:conjugacy-classes}

Let $(\Gamma,\psi)$ be a GBS graph and let $\Lambda$ be its affine representation. Given a vertex $p=(v,\ba)\in V(\Lambda)$ and an element $\bw\in\pA$, we define the vertex $p+\bw:=(v,\ba+\bw)\in V(\Lambda)$. For two vertices $p,p'\in V(\Lambda)$ we denote $p'\ge p$ if $p'=p+\bw$ for some $\bw\in\pA$; in particular this implies that both $p,p'$ belong to the same $\pA_v$ for some $v\in V(\Gamma)$.

\begin{defn}
An \textbf{affine path} in $\Lambda$, with \textit{initial endpoint} $p\in V(\Lambda)$ and \textit{terminal endpoint} $p'\in V(\Lambda)$, is a sequence $(e_1,\dots,e_\ell)$ of edges $e_1,\dots,e_\ell\in E(\Lambda)$ for some $\ell\ge0$, such that there exist $\bw_1,\dots,\bw_\ell\in\pA$ satisfying the conditions $\iota(e_1)+\bw_1=p$ and $\tau(e_\ell)+\bw_\ell=p'$ and $\tau(e_i)+\bw_i=\iota(e_{i+1})+\bw_{i+1}$ for $i=1,\dots,\ell-1$.
\end{defn}

The elements $\bw_1,\dots,\bw_\ell$ are called \textit{translation coefficients} of the path; if they exist, then they are uniquely determined by the path and by the endpoints, and they can be computed algorithmically. They mean that an edge $e\in E(\Lambda)$ connecting $p$ to $q$ allows us also to travel from $p+\bw$ to $q+\bw$ for every $\bw\in\pA$.

\begin{defn}\label{def:conjugacy}
    Let $p,q\in V(\Lambda)$.
    \begin{enumerate}
        \item We denote $p\cnj q$, and we say that $p,q$ are \textbf{conjugate}, if there is an affine path going from $p$ to $q$.
        \item We denote $p\lecnj q$ if $p\le q'$ for some $q'\cnj q$.
        \item We denote $p\qcnj q$, and we say that $p,q$ are \textbf{quasi-conjugate}, if $p\lecnj q$ and $q\lecnj p$.
    \end{enumerate}
\end{defn}

The relation $\cnj$ is an equivalence relations on the set $V(\Lambda)$. The relation $\lecnj$ is a pre-order on $V(\Lambda)$, and $\qcnj$ is the equivalence relation induced by the pre-order. Note that if $p\cnj p'$ then $p+\bw\cnj p'+\bw$ for all $\bw\in\pA$. Similarly, if $p\qcnj p'$ then $p+\bw\qcnj p'+\bw$ for all $\bw\in\pA$.

\subsection{Moves on GBS graphs}\label{sec:moves}

We introduce some moves that can be performed on a GBS graph. Each of them induces an isomorphism at the level of fundamental group of the corresponding graph of groups, see \cite{ACK-iso1}. Let $(\Gamma,\psi)$ be a GBS graph.

\paragraph{Vertex sign-change.}

Let $v\in V(\Gamma)$ be a vertex. Define the map $\psi':E(\Gamma)\rar\bZ\setminus\{0\}$ such that $\psi'(e)=-\psi(e)$ if $\tau(e)=v$ and $\psi'(e)=\psi(e)$ otherwise. We say that the GBS graph $(\Gamma,\psi')$ is obtained from $(\Gamma,\psi)$ by means of a \textbf{vertex sign-change}. If $\cG$ is the GBS graph of groups associated to $(\Gamma,\psi)$, then the vertex sign change move corresponds to changing the chosen generator for the vertex group $G_v$.

\paragraph{Edge sign-change.}

Let $(\Gamma,\psi)$ be a GBS graph. Let $d\in E(\Gamma)$ be an edge. Define the map $\psi':E(\Gamma)\rar\bZ\setminus\{0\}$ such that $\psi'(e)=-\psi(e)$ if $e=d,\ol{d}$ and $\psi'(e)=\psi(e)$ otherwise. We say that the GBS graph $(\Gamma,\psi')$ is obtained from $(\Gamma,\psi)$ by means of an \textbf{edge sign-change}. If $\cG$ is the GBS graph of groups associated to $(\Gamma,\psi)$, then the vertex sign change move corresponds to changing the chosen generator for the edge group $G_d$.

\paragraph{Slide.}

Let $d,e$ be distinct edges with $\tau(d)=\iota(e)=u$ and $\tau(e)=v$; suppose that $\psi(\ol{e})=n$ and $\psi(e)=m$ and $\psi(d)=\ell n$ for some $n,m,\ell\in\bbZ\setminus\{0\}$ (see Figure \ref{fig:slide}). Define the graph $\Gamma'$ by replacing the edge $d$ with an edge $d'$; we set $\iota(d')=\iota(d)$ and $\tau(d')=v$; we set $\psi(\ol{d'})=\psi(\ol{d})$ and $\psi(d')=\ell m$. We say that the GBS graph $(\Gamma',\psi)$ is obtained from $(\Gamma,\psi)$ by means of a \textbf{slide}. At the level of the affine representation, we have an edge $p\edge q$ and we have another edge with an endpoint at $p+\ba$ for some $\ba\in\pA$. The slide has the effect of moving the endpoint from $p+\ba$ to $q+\ba$ (see Figure \ref{fig:slide}).

$$\begin{cases}
p\edge  q\\
r\edge  p+\ba
\end{cases}
\xrightarrow{\text{slide}}\quad
\begin{cases}
p\edge  q\\
r\edge  q+\ba
\end{cases}$$

\begin{figure}[H]
\centering

\includegraphics[width=0.75\textwidth]{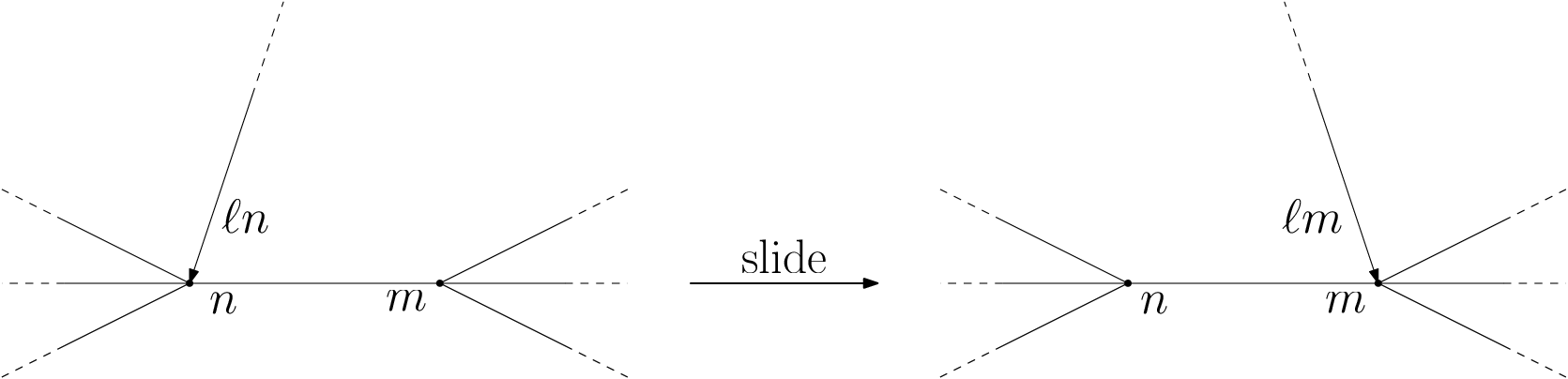}

\vspace{1cm}

\includegraphics[width=\textwidth]{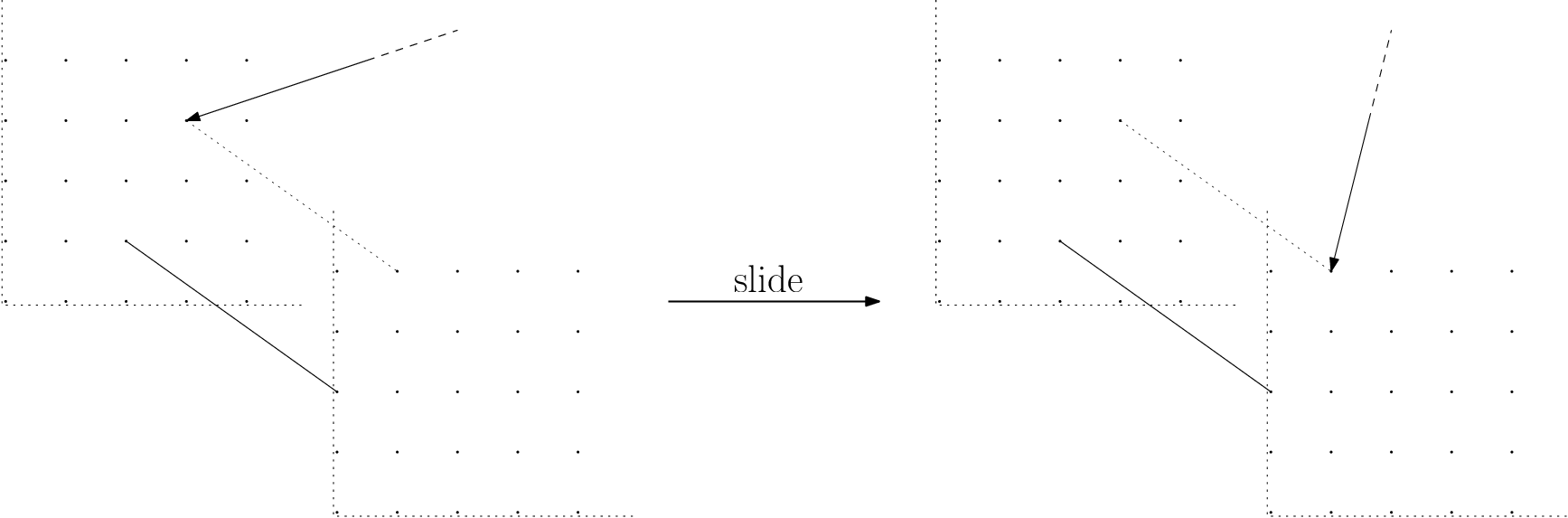}

\caption{An example of a slide move. Above you can see the GBS graphs. Below you can see the corresponding affine representations.}
\label{fig:slide}
\end{figure}

\paragraph{Induction.}

Let $(\Gamma,\psi)$ be a GBS graph. Let $e$ be an edge with $\iota(e)=\tau(e)=v$; suppose that $\psi(\ol{e})=1$ and $\psi(e)=n$ for some $n\in\bbZ\setminus\{0\}$, and choose $\ell\in\bbZ\setminus\{0\}$ and $k\in\bbN$ such that $\ell\divides n^k$. Define the map $\psi'$ equal to $\psi$ except on the edges $d\not=e,\ol{e}$ with $\tau(e)=v$, where we set $\psi'(d)=\ell\cdot\psi(d)$. We say that the GBS graph $(\Gamma,\psi')$ is obtained from $(\Gamma,\psi)$ by means of an \textbf{induction}. At the level of the affine representation, we have an edge $(v,\mathbf{0})\edge (v,\bw)$. We choose $\bw_1\in\pA$ such that $\bw_1\le k\bw$ for some $k\in\bbN$; we take all the endpoints of other edges that are in $\pA_v$, and translate them up by adding $\bw_1$ (see Figure \ref{fig:ind}).

\begin{figure}[H]
\centering

\includegraphics[width=0.65\textwidth]{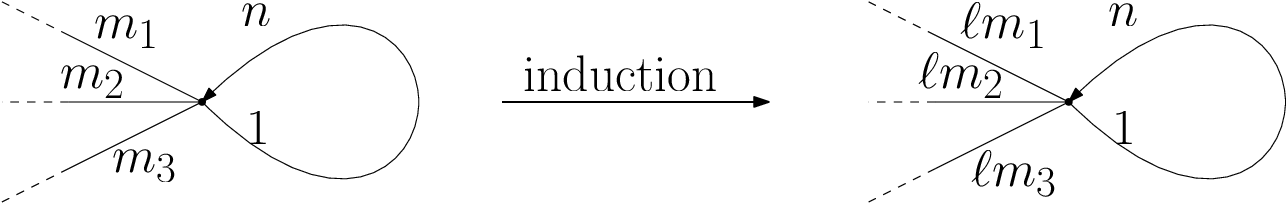}

\vspace{1cm}

\includegraphics[width=0.8\textwidth]{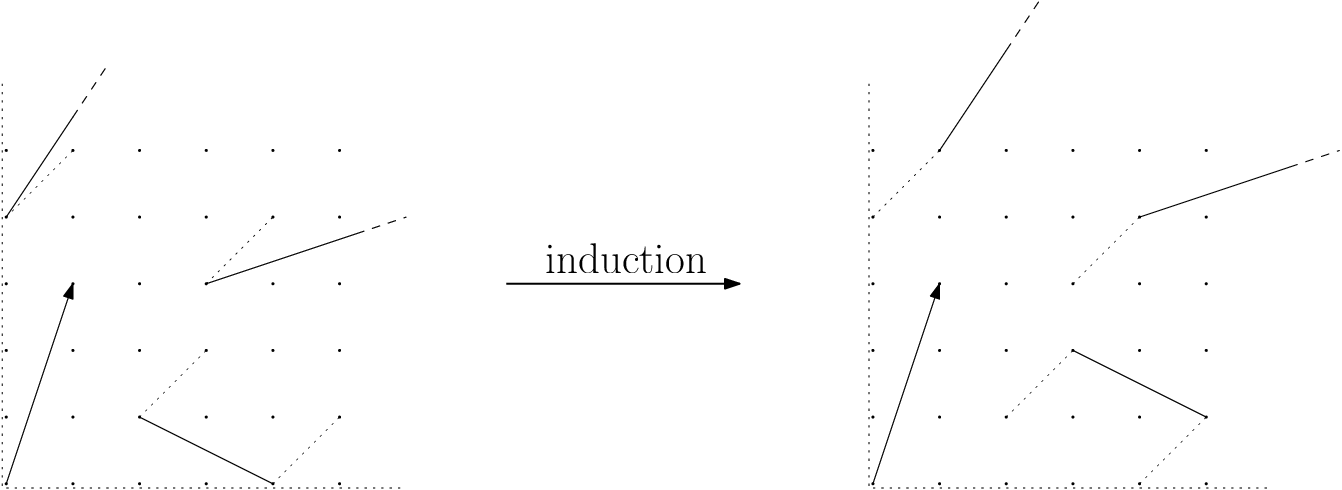}

\caption{An example of an induction move. Above you can see the GBS graphs; here $\ell\divides n^k$ for some integer $k\ge0$. Below you can see the corresponding affine representations.}
\label{fig:ind}
\end{figure}

\paragraph{Swap.}

Let $(\Gamma,\psi)$ be a GBS graph. Let $e_1,e_2$ be distinct edges with $\iota(e_1)=\tau(e_1)=\iota(e_2)=\tau(e_2)=v$; suppose that $\psi(\ol{e}_1)=n$ and $\psi(e_1)=\ell_1 n$ and $\psi(\ol{e}_2)=m$ and $\psi(e_2)=\ell_2 m$ and $n\divides m$ and $m\divides \ell_1^{k_1} n$ and $m\divides \ell_2^{k_2} n$ for some $n,m,\ell_1,\ell_2\in\bbZ\setminus\{0\}$ and $k_1,k_2\in\bbN$ (see Figure \ref{fig:swap}). Define the graph $\Gamma'$ by substituting the edges $e_1,e_2$ with two edges $e_1',e_2'$; we set $\iota(e_1')=\tau(e_1')=\iota(e_2')=\tau(e_2')=v$; we set $\psi(\ol{e_1'})=m$ and $\psi(e_1')=\ell_1 m$ and $\psi(\ol{e_2'})=n$ and  $\psi(e_2')=\ell_2 n$. We say that the GBS graph $(\Gamma',\psi)$ is obtained from $(\Gamma,\psi)$ by means of a \textbf{swap move}. Let $\bw_1,\bw_2\in\pA$ and $p,q\in V(\Lambda)$ be such that $p\le q\le p+k_1\bw_1$ and $p\le q\le p+k_2\bw_2$ for some $k_1,k_2\in\bbN$. At the level of the affine representation, we have an edge $e_1$ going from $p$ to $p+\bw_1$ and an edge $e_2$ going from $q$ to $q+\bw_2$. The swap has the effect of substituting them with $e_1'$ from $q$ to $q+\bw_1$ and with $e_2'$ from $p$ to $p+\bw_2$ (see Figure \ref{fig:swap}).

$$\begin{cases}
p\edge  p+\bw_1\\
q\edge  q+\bw_2
\end{cases}
\xrightarrow{\text{swap}}\quad
\begin{cases}
q\edge  q+\bw_1\\
p\edge  p+\bw_2
\end{cases}$$

\begin{figure}[H]
\centering

\includegraphics[width=0.65\textwidth]{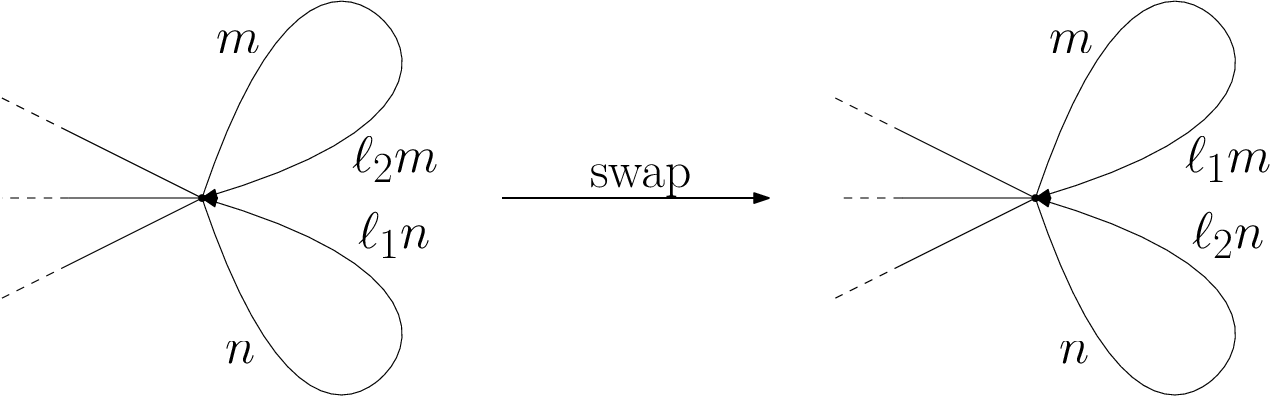}

\vspace{1cm}

\includegraphics[width=0.8\textwidth]{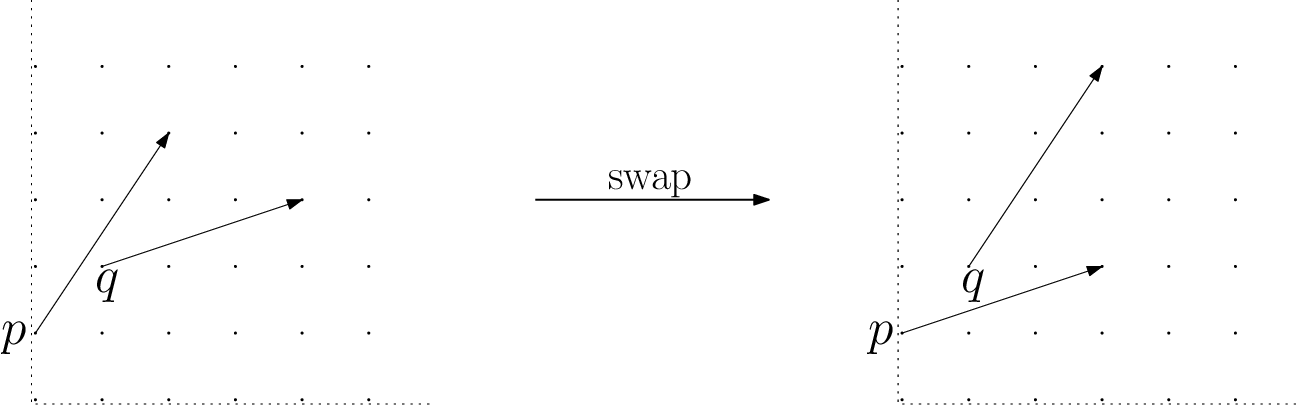}

\caption{An example of a swap move. Above you can see the GBS graphs; here $n\divides m\divides\ell_1^{k_1}n$ and $n\divides m\divides\ell_2^{k_2}n$ for some integers $k_1,k_2\ge0$. Below you can see the corresponding affine representations.}
\label{fig:swap}
\end{figure}

\paragraph{Connection.}\label{sec:connection}

Let $(\Gamma,\psi)$ be a GBS graph. Let $d,e$ be distinct edges with $\iota(d)=u$ and $\tau(d)=\iota(e)=\tau(e)=v$; suppose that  $\psi(\ol{d})=m$ and $\psi(d)=\ell_1 n$ and $\psi(\ol{e})=n$ and $\psi(e)=\ell n$ and $\ell_1\ell_2=\ell^k$ for some $m,n,\ell_1,\ell_2,\ell\in\bbZ\setminus\{0\}$ and $k\in\bbN$ (see Figure \ref{fig:connection}). Define the graph $\Gamma'$ by substituting the edges $d,e$ with two edges $d',e'$; we set $\iota(d')=v$ and $\tau(d')=\iota(e')=\tau(e')=u$; we set  $\psi(\ol{d'})=n$ and $\psi(d')=\ell_2 m$ and $\psi(\ol{e'})=m$ and $\psi(e')=\ell m$. We say that the GBS graph $(\Gamma',\psi)$ is obtained from $(\Gamma,\psi)$ by means of a \textbf{connection move}. Let $\bw,\bw_1,\bw_2\in\pA$ and $k\in\bbN$ be such that $\bw_1+\bw_2=k\cdot\bw$. At the level of the affine representation, we have a two edges $q\edge  p+\bw_1$ and $p\edge  p+\bw$. The connection move has the effect of replacing them with two edges $p\edge  p+\bw_2$ and $q\edge  q+\bw$ (see Figure \ref{fig:connection}).

$$\begin{cases}
q\edge  p+\bw_1\\
p\edge  p+\bw
\end{cases}
\xrightarrow{\text{connection}}\quad
\begin{cases}
p\edge  q+\bw_2\\
q\edge  q+\bw
\end{cases}$$

\begin{rmk}
In the definition of connection move, we also allow for the two vertices $u,v$ to coincide.
\end{rmk}

\begin{figure}[H]
\centering

\includegraphics[width=0.8\textwidth]{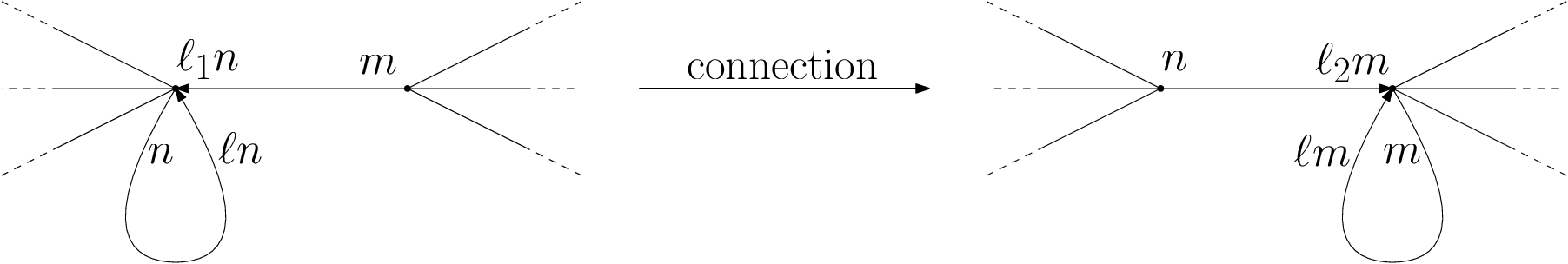}

\vspace{1cm}

\includegraphics[width=\textwidth]{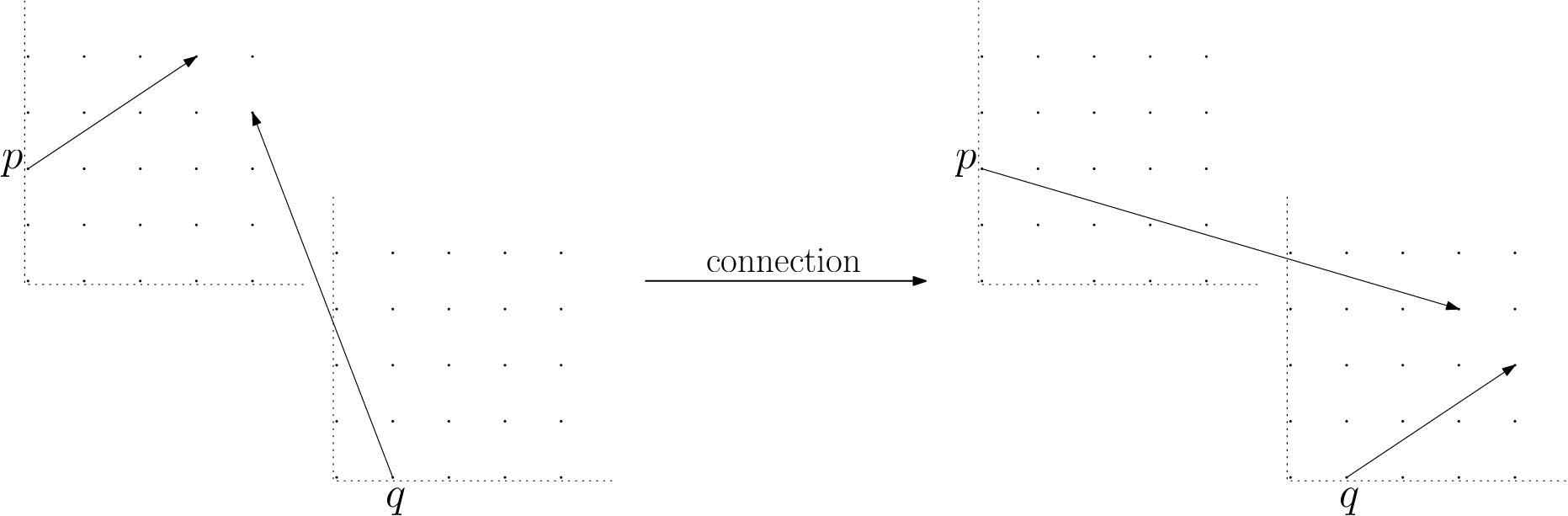}

\caption{An example of a connection move. Above you can see the GBS graphs; here $\ell_1\ell_2=\ell^k$ for some integer $k\ge0$. Below you can see the corresponding affine representations.}
\label{fig:connection}
\end{figure}

\subsection{Sequences of moves}

The following \Cref{thm:sequence-new-moves} is one of the main results from \cite{ACK-iso1}, telling us that, in order to deal with the isomorphism problem for GBSs, it suffices to deal with sequences of moves as described in \Cref{sec:moves} above. For the notion of \textit{totally reduced} GBS graph, we refer the reader to \cite{ACK-iso1}. We point out that the requirement of being totally reduced is not restrictive at all: every GBS graph can be algorithmically changed into a totally reduced one by means of a sequence of moves.

\begin{thm}\label{thm:sequence-new-moves}
Let $(\Gamma,\psi),(\Delta,\phi)$ be totally reduced GBS graphs and suppose that the corresponding graphs of groups have isomorphic fundamental group. Then $\abs{V(\Gamma)}=\abs{V(\Delta)}$ and there is a sequence of slides, swaps, connections, sign-changes and inductions going from $(\Delta,\phi)$ to $(\Gamma,\psi)$. Moreover, all the sign-changes and inductions can be performed at the beginning of the sequence.
\end{thm}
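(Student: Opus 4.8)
The plan is to pass from GBS graphs to the $G$-trees of Bass–Serre theory and to work inside a single deformation space, where Forester's deformation theorem supplies the raw moves. Concretely, a GBS graph $(\Gamma,\psi)$ determines a graph of groups $\cG_\Gamma$ with $G:=\pi_1(\cG_\Gamma)$ acting cocompactly on its Bass–Serre tree $T_\Gamma$, all of whose vertex and edge stabilizers are infinite cyclic; likewise $(\Delta,\phi)$ gives a tree $T_\Delta$ with group $G'$. Fixing an isomorphism $\theta\colon G'\to G$ turns $T_\Delta$ into a cocompact $G$-tree with cyclic stabilizers. The first step is to check that $T_\Gamma$ and $T_\Delta$ lie in the \emph{same} deformation space, i.e. that they have the same elliptic subgroups. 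For a GBS group outside a short list of degenerate cases (those with $G$ isomorphic to $\bbZ$, $\bbZ^2$, the Klein bottle group, or a solvable Baumslag–Solitar group) the maximal elliptic cyclic subgroups are canonical, so the two collections coincide; the degenerate cases are pinned down directly by the totally reduced hypothesis. I would cite \cite{For06,Lev07} for this rigidity.

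With both trees in one deformation space, Forester's Deformation Theorem \cite{For06} produces a finite sequence of elementary expansions and collapses carrying $T_\Delta$ to $T_\Gamma$. These pass through non-reduced trees and alter the number of vertices, so the second step is to replace them by moves that preserve the reduced structure. Here I would invoke the refinement (Clay–Forester \cite{CF08}, together with the authors' own bookkeeping framework) that reduced cocompact trees in a common deformation space are connected by slides and inductions. Translating a tree-level slide or induction into what happens to the prime factorizations in the reduced affine representation $\Lambda$ produces precisely the moves slide, swap, connection and induction, the last three encoding the edge-label arithmetic that a single tree-level slide can hide; the sign-changes record the freedom in choosing generators of the $\bbZ$ vertex and edge groups. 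Since every one of these five moves leaves the underlying graph's vertex set unchanged, and the two endpoints are totally reduced, the equality $\abs{V(\Gamma)}=\abs{V(\Delta)}$ follows for free.

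It remains to front-load the sign-changes and inductions. The point is that these are \emph{arithmetic} moves — a sign-change alters a chosen generator, an induction multiplies certain edge labels by a divisor of a power — whereas slides, swaps and connections are \emph{geometric}, relocating endpoints in $\Lambda$. I would prove a commutation lemma: pushing a single sign-change or induction earlier past a slide, swap or connection turns the latter into another slide, swap or connection with modified parameters, because the arithmetic data transform equivariantly under the geometric relocations. Inducting on the number of arithmetic moves not already at the front, each such move is migrated one step closer to the beginning, and after finitely many steps all sign-changes and inductions sit at the start.

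The main obstacle is the second step: establishing that elementary expansions and collapses reduce to exactly these five move types, with nothing else needed, and doing so while controlling the vertex count. This is a peak-reduction / Whitehead-move analysis, and the delicate part is verifying that the purely combinatorial tree moves, once read through the prime-factorization bookkeeping of $\Lambda$, never require anything beyond slide, swap, connection, induction and sign-change. The commutation lemma of the final paragraph, while routine in spirit, also demands a careful case check against each geometric move.
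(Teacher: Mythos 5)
First, a point of order: the paper gives no proof of this statement — it is imported verbatim from \cite{ACK-iso1} and the printed ``proof'' is the citation. Your proposal can therefore only be judged against the expected architecture of that external argument, not against anything written here.

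Your high-level route — pass to Bass–Serre trees, place both in a common deformation space via Forester's and Levitt's rigidity \cite{For06,Lev07}, connect the trees by elementary expansions and collapses, then convert to moves on reduced graphs following \cite{CF08} — is the standard and almost certainly the intended skeleton. The problem is that you have located all of the actual content of the theorem inside two steps you describe as routine, and neither is. First, the assertion that reduced trees in a common deformation space are connected by ``slides and inductions'' is not what Clay–Forester prove; their connectivity statement requires additional elementary deformations attached to ascending loops, and keeping every intermediate graph reduced is the delicate point. The moves \emph{swap} and \emph{connection} are not bookkeeping artifacts of reading a tree-level slide through the prime factorizations: they are introduced in \cite{ACK-iso1} precisely because slides and inductions alone do not suffice to connect all reduced GBS graphs while controlling the vertex count. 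Note also that $\abs{V(\Gamma)}=\abs{V(\Delta)}$ is a \emph{conclusion} of the theorem; you cannot obtain it ``for free'' from the fact that the five moves preserve the vertex set, because the existence of a sequence using only those five moves is exactly what remains to be shown when you discard the expansions and collapses. That peak-reduction argument is the theorem.

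Second, the ``Moreover'' clause — that all sign-changes and inductions can be performed at the beginning — is singled out in the statement because it is a substantive contribution of \cite{ACK-iso1}, not a formal commutation. An induction multiplies the labels of all other edges at a vertex by a factor $\ell$ dividing a power of the loop label; whether a later slide, swap or connection remains legal once the induction is pushed past it depends on divisibility and control conditions (of the form $p\le q\le p+k\bw$) that change under the move, and in the affine representation an induction translates whole families of endpoints, which interacts with the hypotheses needed for swaps and connections. A one-line appeal to ``the arithmetic data transform equivariantly'' does not establish this; one must check each ordered pair of move types, verify that the commuted sequence exists, and show the procedure terminates without proliferating inductions. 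As it stands, your proposal is a correct map of where the proof lives, but the two regions you mark ``routine'' are exactly where it is proved.
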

\begin{proof}
    See \cite{ACK-iso1}.
\end{proof}

Thus it suffices to deal with the following problem: given two GBS graphs $(\Gamma,\psi),(\Delta,\phi)$, determine whether there is a sequence of edge sign-changes, inductions, slides, swaps, connections going from one to the other. Note that a sign-change (resp. a slide, an induction, a swap, a connection) induces a natural bijection between the set of vertices of the graph before and after the move. Of course we can ignore the issue of guessing the bijection among the sets of vertices and the sign-changes at the beginning of the sequence, as these choices can be done only in finitely many ways. In what follows, we will also ignore the question of guessing the inductions at the beginning of the sequence, as this usually represents a marginal issue.

\section{GBS graphs with two edges}\label{sec:isomorphism-2edges}

The aim of this section is to solve the isomorphism problem for GBS groups associated to GBS graphs with one vertex and two edges.

The difficult case arises when dealing with configurations as defined in \Cref{def:configuration}. In this setting, slide and connection moves modify two of the four endpoints of the edges, while the other two remain fixed at the minimal points (swaps are almost never be applied). In \Cref{sec:binary-trees}, we show that using only slide moves, one can obtain (a subset of) a rooted binary tree of configurations, with a distinguished configuration serving as the \textit{root} of the tree. In \Cref{sec:binary-trees-connection}, we demonstrate that the only relevant connection moves are those applied at the root of the binary tree. In \Cref{sec:sequence-of-roots}, we describe the set of all possible roots obtainable from a given one through connection moves. This set turns out to be a union of two arithmetic progressions, along with finitely many exceptional configurations. This structure allows us to prove \Cref{thm:main}.

\subsection{Configurations}\label{sec:configurations}

Let $(\Gamma,\psi)$ be a GBS graph with one vertex and two edges. Let $\Lambda$ be its affine representation, which consists of a unique copy of $\pA$, corresponding to the unique vertex of $\Gamma$, and two edges.

\begin{defn}\label{def:configuration}
    A \textbf{configuration} is given by two edges inside $\pA$
    \begin{equation*}
        (C)=\begin{cases}
            \ba_1\edge\ba_1+\bx_1\\
            \ba_2\edge\ba_2+\bx_2
        \end{cases}
    \end{equation*}
    for some $\ba_1,\ba_2\in\pA$ and $\bx_1,\bx_2\in\bA$ satisfying the following properties:
    \begin{enumerate}
        \item $\ba_1\not\ge\ba_2$ and $\ba_2\not\ge\ba_1$.
        \item $\ba_1+\bx_1$ is greater than or equal to at least one of $\ba_1$ or $\ba_2$.
        \item $\ba_2+\bx_2$ is greater than or equal to at least one of $\ba_1$ or $\ba_2$.
    \end{enumerate}
\end{defn}

We say that the configuration $(C)$ has \textit{minimal points} $\ba_1,\ba_2$ and \textit{vectors} $\bx_1,\bx_2$. When the minimal points $\ba_1,\ba_2$ are clear from the context, we will omit then and just say that $(C)$ has vectors $\bx_1,\bx_2$. Note that, if we apply a sequence of moves to a configuration, we obtain another configuration. Moreover, the two minimal points $\ba_1,\ba_2$ will stay the same (except possibly for the $\bbZ/2\bbZ$ component), and thus are an isomorphism invariant of $(\Gamma,\psi)$.

\begin{defn}
    Let $(C)$ be a configuration with minimal points $\ba_1,\ba_2$ and vectors $\bx_1,\bx_2$. We say that $(C)$ is \textbf{degenerate} if it falls into at least one of the following cases:
    \begin{enumerate}
        \item At least one of the vectors $\bx_1,\bx_2$ lies in $\bbZ/2\bbZ\le\bA$.
        \item The two edges are $\ba_1\edge\ba_2+\be$ and $\ba_2\edge\ba_1+\be'$ for some $\be,\be'\in\bbZ/2\bbZ\le\bA$.
        \item $\ba_1+\bx_1\not\ge\ba_2$ and $\ba_2+\bx_2\not\ge\ba_1$.
    \end{enumerate}
\end{defn}

A configuration is degenerate if and only if, applying sequences of slides, swaps, and connections involving only the two edges of the configuration, we can only reach finitely many GBS graphs. In this case, in all the GBS graphs that can be reached, the two edges form a degenerate configuration.

In what follows, we assume that our configurations are non-degenerate. In this case, the only moves that can be applied are slides and connections, and applying these moves, we obtain other (non-degenerate) configurations. We point out that almost all slide moves change one of the two endpoints $\ba_1+\bx_1,\ba_2+\bx_2$, while the endpoints $\ba_1,\ba_2$ are usually not involved. The only exception is the case of twin roots (\Cref{def:twin-roots} below), which will be treated separately.

\subsection{Sons, roots and binary trees}\label{sec:binary-trees}

We are interested in studying the set of all configurations that can be reached from a given one using only slide moves. Suppose that we are given a configuration $(C)$ with minimal points $\ba_1,\ba_2$ and vectors $\bx_1,\bx_2$. If $\ba_1+\bx_1\ge\ba_2$, then we can perform a slide move to obtain the configuration
\begin{equation*}
    (C1)=\begin{cases}
        \ba_1\edge\ba_1+\bx_1+\bx_2\\
        \ba_2\edge\ba_2+\bx_2
    \end{cases}
\end{equation*}
and if $\ba_2+\bx_2\ge\ba_1$ then we can perform a slide move to obtain the configuration
\begin{equation*}
    (C2)=\begin{cases}
        \ba_1\edge\ba_1+\bx_1\\
        \ba_2\edge\ba_2+\bx_2+\bx_1
    \end{cases}
\end{equation*}
We say that the configurations $(C1)$ and $(C2)$ are the \textbf{sons} of the configuration $(C)$. The first son $(C1)$ is the one obtained by changing the first edge and has vectors $\bx_1+\bx_2,\bx_2$; the second son $(C2)$ is the one obtained by changing the second edge and has vectors $\bx_1,\bx_2+\bx_1$ (see \Cref{fig:sons}). Every non-degenerate configuration has at least one son, and at most two.

Denote with $\{1,2\}^*$ the set of all finite words in the letters $1,2$. Then we can consider the configurations $(Cs)$ for $s\in\{1,2\}^*$ obtained by taking iterated sons. We say that the configuration $(Cs)$ \textit{exists} if the sequence of slide moves given by $s$ can be performed, and we say that the configuration $(Cs)$ \textit{does not exist} otherwise. For example, if the configuration $(C1222)$ exists, then it has vectors $\bx_1+\bx_2,\bx_2+3(\bx_1+\bx_2)$.

\begin{figure}[H]
\centering
\includegraphics[width=0.7\textwidth]{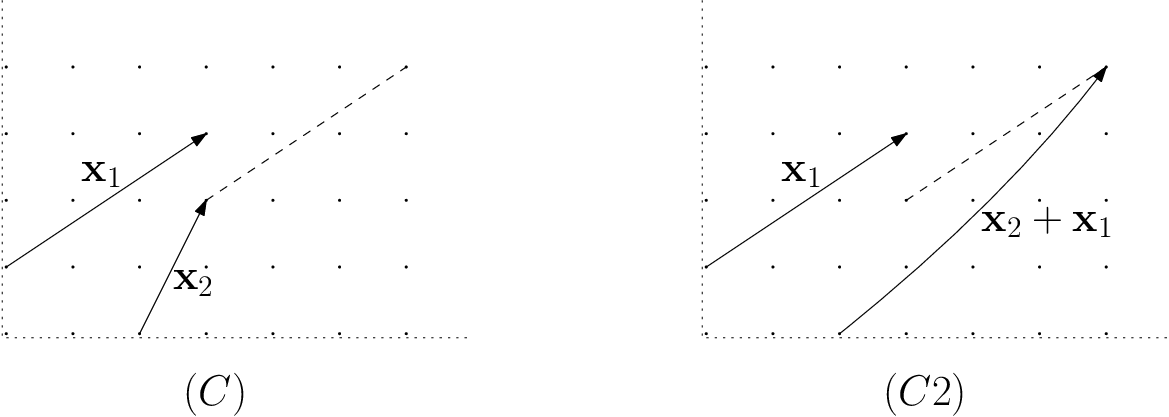}
\centering
\caption{An example of a configuration $(C)$ and of its second son $(C2)$.}
\label{fig:sons}
\end{figure}

\begin{defn}\label{def:root-configuration}
    A configuration $(R)$ with minimal points $\ba_1,\ba_2$ and vectors $\bx_1,\bx_2$ is called \textbf{root} if $\ba_1+\bx_1\not\ge\ba_2+\bx_2$ and $\ba_2+\bx_2\not\ge\ba_1+\bx_1$.
\end{defn}

A configuration is root if and only if it is not the son of any other configuration. Every configuration has a root, and the root is unique in most cases, except in the case of twin roots, as we now explain.

\begin{defn}[Twin roots]\label{def:twin-roots}
    Two non-degenerate roots $(R),(Q)$ are called \textbf{twin roots} if they are of the form
    \[
        (R)=\begin{cases}
            \ba_1\edge\ba_2+\be\\
            \ba_2\edge\ba_2+\bx_2
        \end{cases}
        \qquad\text{and}\qquad
        (Q)=\begin{cases}
            \ba_1\edge\ba_1+\bx_1\\
            \ba_2\edge\ba_1+\be
        \end{cases}
    \]
    with $\be\in\bbZ/2\bbZ\le\bA$ and $\bx_1,\bx_2\not\ge\mathbf{0}$ satisfying $\bx_1+\lambda\ba_1=\bx_2+\lambda\ba_2+\lambda\be$ for some integer $\lambda\ge2$.
\end{defn}

If a root has a twin, then the twin is uniquely determined. It is easy to see that, given two twin roots, we can go from one to the other by means of a sequence of slide moves.

\begin{prop}[Existence and uniqueness of roots]\label{prop:unique-roots}
    Every non-degenerate configuration $(C)$ is an iterated son $(C)=(Rs)$ of a root $(R)$. Moreover, exactly one of the following cases takes place:
    \begin{enumerate}
        \item There is exactly one possible choice of $(R)$ and of $s\in\{1,2\}^*$.
        \item There are exactly two possible choices of $(R)$, and they are twin roots. For each of them, there is a unique possible choice of $s\in\{1,2\}^*$.
    \end{enumerate}
\end{prop}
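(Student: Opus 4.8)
The plan is to understand the configuration tree as a kind of subtraction process (reminiscent of the Euclidean algorithm or the Stern-Brocot tree) and to analyze it going \emph{backwards}: given a configuration, we want to reverse the son operation as far as possible. Recall that the first son has vectors $\bx_1+\bx_2,\bx_2$ and the second son has vectors $\bx_1,\bx_2+\bx_1$. Thus, reversing a son amounts to replacing the pair $(\bx_1,\bx_2)$ either by $(\bx_1-\bx_2,\bx_2)$ (undoing a first-son step) or by $(\bx_1,\bx_2-\bx_1)$ (undoing a second-son step). The key observation is that reversibility is governed precisely by the root condition in \Cref{def:root-configuration}: a configuration is a son of another one exactly when it fails to be a root, and the direction in which it can be un-son-ed (first versus second) is dictated by which of the two inequalities $\ba_1+\bx_1\ge\ba_2+\bx_2$ or $\ba_2+\bx_2\ge\ba_1+\bx_1$ holds. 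I would begin by making this precise: a non-degenerate configuration $(C)$ equals $(C')1$ for some configuration $(C')$ iff $\ba_1+\bx_1\ge\ba_2+\bx_2$ (with $\be$-type degeneracies excluded), and symmetrically for second sons.

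Next I would set up a \textbf{termination / descent argument} to prove existence. Starting from $(C)$, repeatedly undo a son step whenever the configuration is not a root. To see this terminates, I would introduce a suitable non-negative monovariant — the natural candidate is a ``total size'' of the pair of vectors measured against the positive cone $\pA$, e.g. the sum of the coordinates of $\bx_1+\bx_2$ in $\bbR^{\cP}$, or more robustly the value of a fixed positive linear functional evaluated on $\bx_1+\bx_2$. Each un-son step strictly decreases this quantity (we subtract one vector from the other, and both point ``upward'' relative to the minimal points), so the process cannot continue forever and must terminate at a root $(R)$; reading the steps forward gives $(C)=(Rs)$. Here I would need to check that non-degeneracy is preserved throughout, so that the inductive hypotheses about sons remain valid at each stage — this is where the hypotheses in the definition of degenerate and the twin-root exception come into play.

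The crux is \textbf{uniqueness}, and this is where I expect the main obstacle. Once a configuration is a strict son (only one of the two inequalities holds strictly), the reverse step is forced, so a unique backward path is determined until we hit a configuration where \emph{both} $\ba_1+\bx_1\ge\ba_2+\bx_2$ and $\ba_2+\bx_2\ge\ba_1+\bx_1$ could a priori be undone. At such a node both coordinates satisfy $\ba_1+\bx_1 = \ba_2+\bx_2$ (up to the $\bbZ/2\bbZ$ component $\be$), which is exactly the twin-root configuration of \Cref{def:twin-roots}. The plan is to show that this coincidence can occur \emph{at most once} along the backward process and only at the very top: if at some node both un-son steps are available, then after performing either one the resulting configuration is already a root (one checks that neither branch can be further reversed, using $\bx_1+\lambda\ba_1 = \bx_2+\lambda\ba_2+\lambda\be$ to pin down the arithmetic). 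This yields the two-root alternative of case (2), with the two roots being twins, and the remark preceding the proposition already records that they are joined by slides. Conversely, if no such ambiguous node is ever encountered, the backward path is unique, giving case (1). The delicate point — and the likely source of difficulty — is verifying that the branching happens \emph{exactly} at the twin-root locus and nowhere else, i.e. ruling out configurations where both inequalities hold as equalities on more than one coordinate or where a degenerate configuration sneaks in; this requires a careful coordinatewise analysis of when $\bx_1-\bx_2$ and $\bx_2-\bx_1$ can \emph{both} lie in $\pA$, which forces the rank-one relation $\bx_1+\lambda\ba_1 = \bx_2+\lambda\ba_2+\lambda\be$ characterizing twins.
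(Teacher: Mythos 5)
Your overall skeleton is the paper's: reverse the son operation, observe that each backward step is forced except when $\ba_2+\bx_2=\ba_1+\bx_1+\be$ with $\be\in\bbZ/2\bbZ$, and identify that exceptional locus with the twin roots. But two of your concrete claims fail as stated. First, the termination monovariant. You assert that each un-son step strictly decreases the coordinate sum (or any positive linear functional) of $\bx_1+\bx_2$ because ``both vectors point upward relative to the minimal points.'' This is not true: the defining condition of a configuration only requires the \emph{endpoint} $\ba_i+\bx_i$ to dominate one of $\ba_1,\ba_2$, so e.g.\ $\bx_2\ge\ba_1-\ba_2$ is allowed, and since $\ba_1-\ba_2$ has components of both signs, $\bx_2$ can have strictly negative coordinate sum. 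Undoing a first-son step replaces $\bx_1$ by $\bx_1-\bx_2$, which then \emph{increases} your monovariant. This is precisely why the paper's termination argument is a case analysis: when $\bx_1,\bx_2\ge\mathbf{0}$ the coordinate-sum argument works (using non-degeneracy for strictness), but once, say, $\bx_2\not\ge\mathbf{0}$, one argues separately that iterated second fathers ($\bx_2\mapsto\bx_2-k\bx_1$ with $\bx_1\ge\mathbf{0}$) and iterated first fathers ($\bx_1\mapsto\bx_1-k\bx_2$ with $-\bx_2\not\ge\mathbf{0}$, so some component of $\bx_1-k\bx_2$ tends to $-\infty$) must each stop. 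Your plan as written does not cover the mixed-sign case.

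Second, in the uniqueness analysis you claim that at an ambiguous node ``after performing either one the resulting configuration is already a root.'' This is false. After passing to, say, the first father, the edges are $\ba_1\edge\ba_2+\be$ and $\ba_2\edge\ba_2+\bx_2$, and from there one is \emph{forced} to take iterated second fathers, subtracting $\ba_2-\ba_1+\be$ from the second vector some number $\mu\ge0$ of times before a root is reached (and symmetrically $\nu$ times on the other branch). So the branching node is generally several levels below the two twin roots, not immediately below them. The correct statement, which your argument still needs, is that each branch is deterministic after the split (no second ambiguity can occur, since a further coincidence of endpoints would force a vector into $\bbZ/2\bbZ\le\bA$, contradicting non-degeneracy) and that each branch terminates because $\ba_1-\ba_2\not\ge\mathbf{0}$. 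With those two repairs your outline matches the paper's proof; without them, neither existence nor the twin-root dichotomy is actually established.
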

\begin{proof}
    Take a configuration given by edges $\ba_1\edge\ba_1+\bx_1$ and $\ba_2\edge\ba_2+\bx_2$. This is a first son of some other configuration if and only if $\ba_1+\bx_1\ge\ba_2+\bx_2$, and in that case the ``first father" is uniquely determined. Similarly, it is a second son of another configuration if and only if $\ba_2+\bx_2\ge\ba_1+\bx_1$, and in that case ``second father" is uniquely determined. Thus we can find a unique father (either a first father or a second father) of our configuration; unless $\ba_2+\bx_2=\ba_1+\bx_1+\be$ with $\be\in\bbZ/2\bbZ\in\bA$, in which case we can find exactly one first father and exactly one second father.

    CASE OF TWIN ROOTS: suppose that $\ba_2+\bx_2=\ba_1+\bx_1+\be$ with $\be\in\bbZ/2\bbZ\in\bA$. Then we can go to the first father with edges $\ba_1\edge\ba_2+\be$ and $\ba_2\edge\ba_2+\bx_2$; from here, we are forced to take iterated second fathers with edges $\ba_1\edge\ba_2+\be$ and $\ba_2\edge\ba_2+\bx_2-k(\ba_2-\ba_1+\be)$ for $k\ge0$, and note that at some point this must stop, since $\ba_1-\ba_2\not\ge\mathbf{0}$; thus we have a unique way of reaching a root, which will have edges $\ba_1\edge\ba_2+\be$ and $\ba_2\edge\ba_2+\bx_2+\be-\mu(\ba_2-\ba_1+\be)$ for some $\mu\in\bbN$. Similarly, we can go to the second father with edges $\ba_1\edge\ba_1+\bx_1$ and $\ba_2\edge\ba_1+\be$; from here, we have a unique way of reaching a root, which will have edges $\ba_1\edge\ba_1+\bx_1-\nu(\ba_1-\ba_2+\be)$ and $\ba_2\edge\ba_1+\be$ for some $\nu\in\bbN$. It is easy to check that these are twin roots, and are the unique two roots possible for our configuration.

    If we are not in the ``case of twin roots", then we have a unique father for our configuration, so we can move to this father. Similarly, we keep taking iterated fathers; if at any point we end up in a configuration which has more than one father, then we are in the ``case of twin roots", and we are done. Otherwise, we just have to show that the process terminates, and the conclusion will follow.

    If $\bx_1,\bx_2\ge\mathbf{0}$, then the sum of all of the components of $\bx_1+\bx_2$ strictly decreases every time we take a father (since the configuration is non-degenerate). Thus this can not be the case along a whole infinite sequence. Suppose that at some point we end up, say, in a configuration with $\bx_1\ge\mathbf{0}$ and $\bx_2\not\ge\mathbf{0}$; note that in any case $-\bx_2\not\ge\mathbf{0}$. If $\ba_2+\bx_2\ge\ba_1+\bx_1$, then take iterated second fathers, changing $\bx_2$ into $\bx_2-k\bx_1$ for $k\ge0$, and this must stop since $\bx_1\ge\mathbf{0}$ and the configuration is non-degenerate. If $\ba_1+\bx_1\ge\ba_2+\bx_2$, then we take iterated first fathers, changing $\bx_1$ into $\bx_1-k\bx_2$, and the process must terminate since $-\bx_2\not\ge\mathbf{0}$. The statement follows.
\end{proof}

Fixed a configuration $(C)$, we are interested in describing the family of its iterated sons $(Cs)$ for $s\in\{1,2\}^*$. By Proposition \ref{prop:unique-roots}, we know that the set of iterated sons of $(C)$ is a subset of a rooted binary tree; in fact, if $(Cs)=(Cs')$ then we write $(C)=(Rt)$ for some root $(R)$, and we deduce that $(Rts)=(Rts')$ and thus $ts=ts'$, yielding $s=s'$. Proposition \ref{prop:binary-trees} below gives a precise characterization of which subsets of the binary tree can occur (see Figure \ref{fig:binary-trees}), and in which cases. We are mainly interested in the particular case when $(C)$ is a root but the result is proved without this assumption.

\begin{prop}[Binary trees]\label{prop:binary-trees}
    Suppose that $(C)$ is a non-degenerate configuration with minimal points $\ba_1,\ba_2$ and vectors $\bx_1,\bx_2$. Then exactly one of the following cases happens:
    \begin{enumerate}
        \item\label{itm:2i} $\ba_1+\bx_1\not\ge\ba_2$. In this case, $(Cs)$ exists if and only if $s=2^i$ for some $i\ge0$.
        \item\label{itm:1i} $\ba_2+\bx_2\not\ge\ba_1$. In this case, $(Cs)$ exists if and only if $s=1^i$ for some $i\ge0$.
        \item\label{itm:binary-tree} $\ba_1+\bx_1\ge\ba_2$ and $\ba_2+\bx_2\ge\ba_1$ and 
        $\bx_1,\bx_2\ge\mathbf{0}$. In this case, $(Cs)$ exists for all $s\in\{1,2\}^*$.
        \item\label{itm:truncated1} $\ba_1+\bx_1\ge\ba_2$ and $\ba_2+\bx_2\ge\ba_1$ and 
        $\bx_1\ge\mathbf{0}$ and $\bx_2\not\ge\mathbf{0}$. In this case, let $h\ge0$ be the maximum integer such that $\ba_1+\bx_1+h\bx_2\ge\ba_2$. Then, $(Cs)$ exists if and only if either $s = 1^{h'}2 s'$ for some $0\le h'\le h$ and some $s'\in \{1,2\}^*$, or $s=1^{h+1}2^i$ for some $i\ge0$.
        \item\label{itm:truncated2} $\ba_1+\bx_1\ge\ba_2$ and $\ba_2+\bx_2\ge\ba_1$ and 
        $\bx_1\not\ge\mathbf{0}$ and $\bx_2\ge\mathbf{0}$. In this case, let $k\ge0$ be the maximum integer such that $\ba_2+\bx_2+k\bx_1\ge\ba_1$. Then, $(Cs)$ exists if and only if either $s = 2^{k'}1s'$ for some $0\le k'\le k$ and some $s'\in \{1,2\}^*$, or $s=2^{k+1}1^i$ for some $i\ge0$.
        \item\label{itm:truncated12} $\ba_1+\bx_1\ge\ba_2$ and $\ba_2+\bx_2\ge\ba_1$ and 
        $\bx_1,\bx_2\not\ge\mathbf{0}$. In this case, let $h\ge0$ be the maximum integer such that $\ba_1+\bx_1+h\bx_2\ge\ba_2$, and let $k\ge0$ be the maximum integer such that $\ba_2+\bx_2+k\bx_1\ge\ba_1$. Then $(Cs)$ exists if and only if either $s=1^{h'}2s'$ for some $0\le h'\le h$ and $s'\in\{1,2\}^*$, or $s=2^{k'}1s'$ for some $0\le k'\le k$ and $s'\in\{1,2\}^*$, or $s=1^{h+1}2^i$ for some $i\ge0$, or $s=2^{k+1}1^i$ for some $i\ge0$.
    \end{enumerate}
\end{prop}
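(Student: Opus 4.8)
The plan is to exploit the self-similar recursive structure of the son operation. From the definitions one reads off two standing facts used throughout: the first son $(C1)$ exists exactly when $\ba_1+\bx_1\ge\ba_2$, which I abbreviate $(\star_1)$, and the second son $(C2)$ exists exactly when $\ba_2+\bx_2\ge\ba_1$, abbreviated $(\star_2)$; moreover taking the first son replaces $(\bx_1,\bx_2)$ by $(\bx_1+\bx_2,\bx_2)$ and the second son replaces it by $(\bx_1,\bx_1+\bx_2)$, while the minimal points $\ba_1,\ba_2$ are untouched. Hence the vectors of any $(Cs)$ are explicit nonnegative integer combinations of $\bx_1,\bx_2$, and existence of $(Cs)$ is decided by testing $(\star_1)$ or $(\star_2)$ at each step. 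The engine of the argument is the following \emph{Key Lemma}: if both $(\star_1)$ and $(\star_2)$ hold then $\bx_1+\bx_2\ge\mathbf{0}$; the proof is to add, for each prime $r$, the $r$-components of $\ba_1+\bx_1\ge\ba_2$ and $\ba_2+\bx_2\ge\ba_1$, giving $(\bx_1)_r+(\bx_2)_r\ge0$.

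First I would note that the six cases are exhaustive and mutually exclusive for a non-degenerate configuration: the third degeneracy condition forbids $(\star_1)$ and $(\star_2)$ from failing at once, so either exactly one fails, giving cases \ref{itm:2i} or \ref{itm:1i}, or both hold, and then the signs of $\bx_1,\bx_2$ split into cases \ref{itm:binary-tree}--\ref{itm:truncated12}. In case \ref{itm:2i}, where $(\star_1)$ fails, the second configuration axiom forces $\bx_1\ge\mathbf{0}$ and non-degeneracy forces $(\star_2)$; since a second son leaves the first vector unchanged, $(\star_1)$ stays false and $(\star_2)$ stays true all the way down the $2$-branch, so the descendants are exactly $(C2^i)$, and case \ref{itm:1i} is symmetric. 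In case \ref{itm:binary-tree} the defining package ``$(\star_1),(\star_2)$ and $\bx_1,\bx_2\ge\mathbf0$'' reproduces itself under either son — using $\bx_2\ge\mathbf0$ to carry $(\star_1)$ and the Key Lemma for the new signs — so by induction on the length of $s$ every $(Cs)$ exists.

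The content lies in cases \ref{itm:truncated1}--\ref{itm:truncated12}, where some vector is not $\ge\mathbf0$. Here I would first justify $h$: because $\bx_2\not\ge\mathbf0$, the inequalities $\ba_1+\bx_1+j\bx_2\ge\ba_2$ hold precisely on an initial segment $0\le j\le h$ (the components with $(\bx_2)_r\ge0$ remain satisfied once $(\star_1)$ holds at $j=0$, whereas each component with $(\bx_2)_r<0$ eventually fails), so iterating first sons yields exactly the spine $(C1^{j})$ for $0\le j\le h+1$, the node $(C1^{h+1})$ no longer admitting a first son. The crux is to propagate signs along this spine: writing $u_j=\bx_1+j\bx_2$ for the first vector of $(C1^{j})$, the Key Lemma applied at $(C1^{j})$ — which satisfies $(\star_1)$ for $j\le h$ and inherits $(\star_2)$ — yields $u_{j+1}=u_j+\bx_2\ge\mathbf0$. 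Thus $u_j\ge\mathbf0$ along the spine, so each turn $(C1^{h'}2)$ has both its vectors $u_{h'},u_{h'+1}$ nonnegative; a short computation (feeding $(\star_2)$ and $u_{h'}\ge\mathbf0$ into the second-son condition) shows it also satisfies $(\star_1),(\star_2)$, placing it in case \ref{itm:binary-tree} with a full subtree, while $(C1^{h+1})$ fails $(\star_1)$ and is a case \ref{itm:2i} node contributing only $(C1^{h+1}2^i)$. Cases \ref{itm:truncated1} and \ref{itm:truncated2} are mirror images under exchanging the two edges, and case \ref{itm:truncated12} results from running this spine analysis simultaneously on the $1$-branch (parameter $h$) and the $2$-branch (parameter $k$).

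The step I expect to be most delicate is this sign-propagation in case \ref{itm:truncated12}, together with the bookkeeping that fixes the exact index ranges. There neither $\bx_1$ nor $\bx_2$ is $\ge\mathbf0$, so $u_0=\bx_1$ starts out negative and only becomes nonnegative after the first step; consequently the immediate sons $(C1)$ and $(C2)$ behave as the starts of the two spines (of types \ref{itm:truncated1} and \ref{itm:truncated2}), and one must check carefully at which turns the full case \ref{itm:binary-tree} subtrees actually attach. Tracking precisely which of $(\star_1),(\star_2)$ survives at each node — via the explicit vector recursion together with the Key Lemma — is exactly what pins down the families listed in the statement.
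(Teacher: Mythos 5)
Your proposal is correct and follows essentially the same route as the paper: identify the $1$-spine $(C1^{j})$, show that each turn-off $(C1^{h'}2)$ lands in the full-tree case and that $(C1^{h+1})$ lands in the case of Item \ref{itm:2i}, and recurse (with the two-sided version handling Item \ref{itm:truncated12}). Your Key Lemma --- adding the componentwise inequalities $\ba_1+\bx_1\ge\ba_2$ and $\ba_2+\bx_2\ge\ba_1$ to get $\bx_1+\bx_2\ge\mathbf{0}$ --- is a nice explicit justification of the sign-propagation along the spine, a step the paper's proof leaves implicit when it simply asserts that $(C1^{h'}2)$ satisfies the conditions of Item \ref{itm:binary-tree}.
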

\begin{proof}
    \Cref{itm:2i} follows from the fact that the configuration is non-degenerate, and thus $\ba_2+\bx_1\ge\ba_1$. Similarly for \Cref{itm:1i}.
    
    If we are in a configuration satisfying the conditions of \Cref{itm:binary-tree}, then the configuration has two sons, that will still satisfy the conditions of \Cref{itm:binary-tree}. Thus the full binary tree originating from that configuration exists. In particular we obtain \Cref{itm:binary-tree}.

    Suppose that we are in the conditions of \Cref{itm:truncated1}. The configuration $(C2)$ exists, and it satisfies the conditions of \Cref{itm:binary-tree}, and thus the full binary tree over this configuration exists. For every integer $0\le h'\le h$, the configuration $(C1^{h'}2)$ exists, and it satisfies the conditions of \Cref{itm:binary-tree}, and thus the full binary tree over this configuration exists. Finally, the configuration $(C1^{h+1})$ exists, and it satisfies the conditions of \Cref{itm:2i}; the conclusion follows.

    \Cref{itm:truncated2} and \Cref{itm:truncated12} are analogous.
\end{proof}

\begin{figure}[H]
\centering
\includegraphics[width=\textwidth]{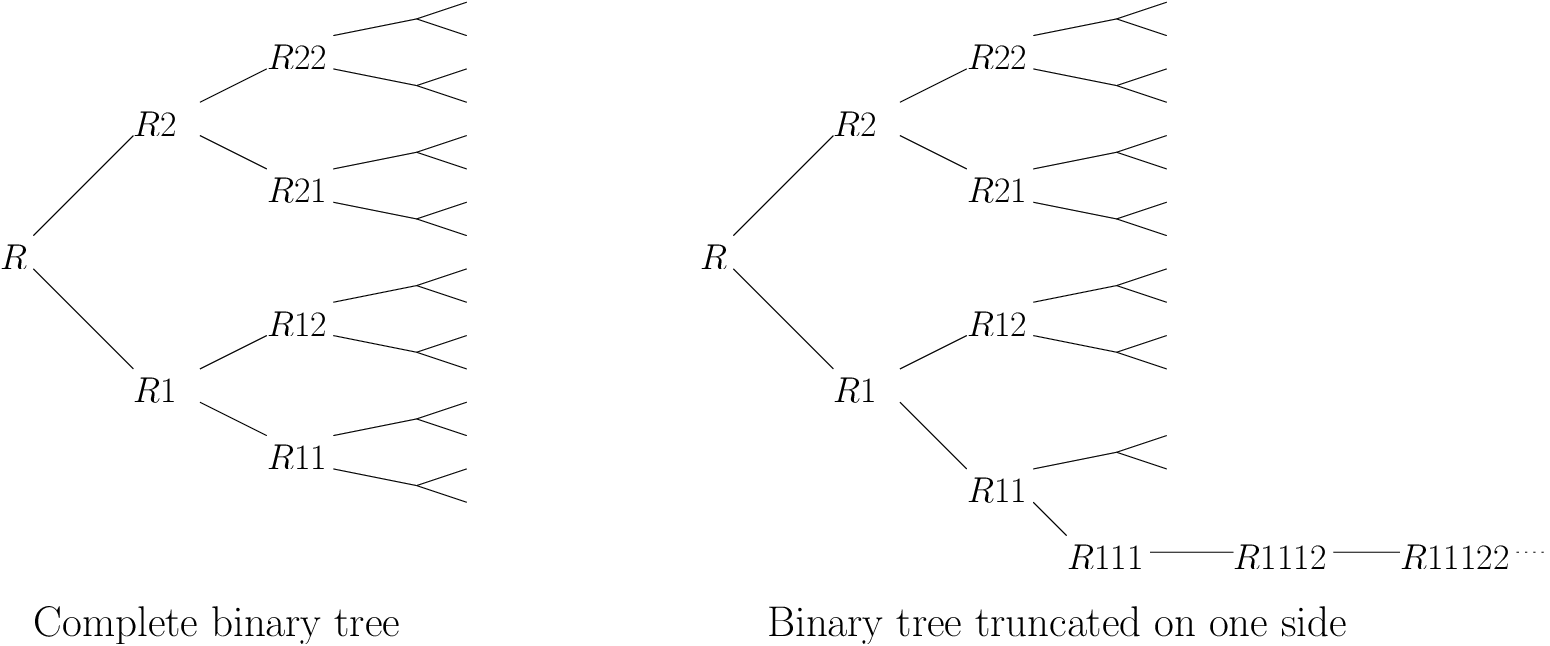}
\centering
\caption{Two examples of subsets of a rooted binary tree. On the left, the tree corresponding to Item \ref{itm:binary-tree} of Proposition \ref{prop:binary-trees}, and on the right, the tree corresponding to Item \ref{itm:truncated1}.}
\label{fig:binary-trees}
\end{figure}

In the particular case of twin roots, the behavior of the binary trees is as follows. Let
\[
    (R)=\begin{cases}
        \ba_1\edge\ba_2+\be\\
        \ba_2\edge\ba_2+\bx_2
    \end{cases}
     \qquad\text{and}\qquad
     (Q)=\begin{cases}
        \ba_1\edge\ba_1+\bx_1\\
         \ba_2\edge\ba_1+\be
    \end{cases}
\]
be non-degenerate twin roots with $\bx_1+\lambda\ba_1=\bx_2+\lambda\ba_2+\lambda\be$ for $\lambda\ge2$ (see \Cref{fig:twin-roots}).

\begin{figure}[H]
\centering
\includegraphics[width=0.9\textwidth]{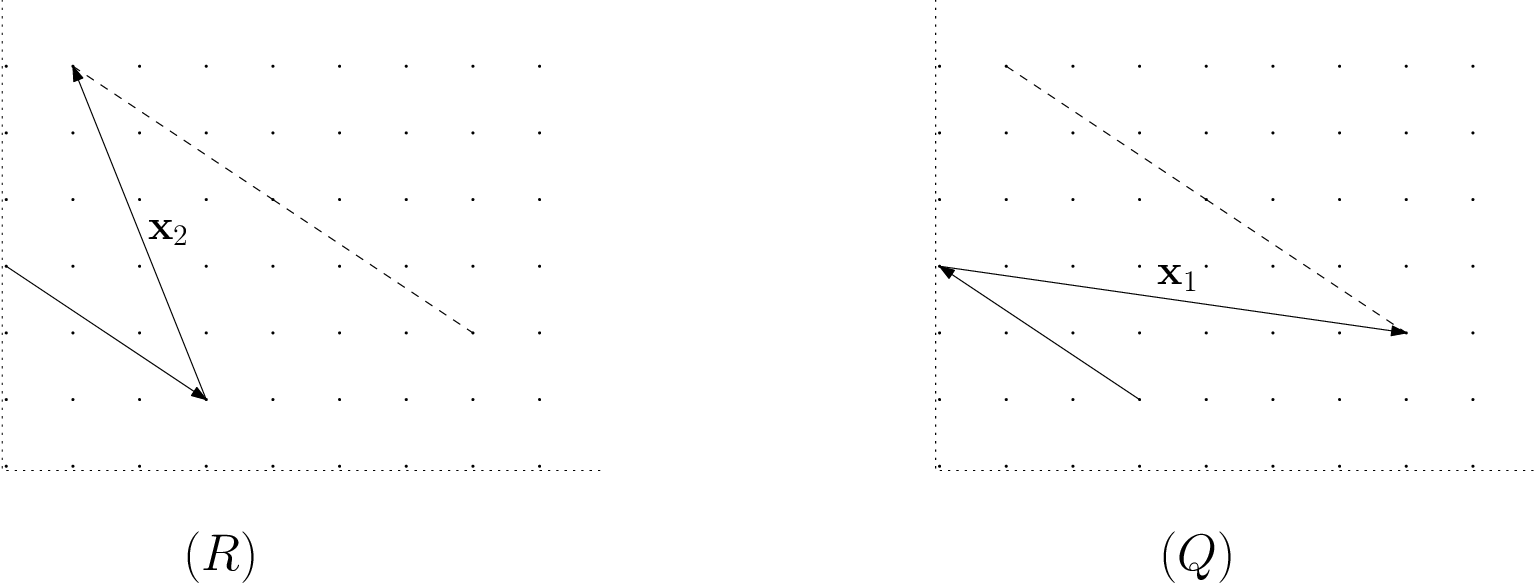}
\centering
\caption{An example of two twin roots. The dashed line witnesses the identity $\bx_1+3\ba_1=\bx_2+3\ba_2$, and hints the sequence of slide moves necessary to pass from one twin to the other.}
\label{fig:twin-roots}
\end{figure}

\begin{lem}[Binary trees for twin roots]\label{lem:twin-binary-trees}
    We have the following:
    \begin{enumerate}
        \item\label{itm:sons-twin-1} For $s\in\{1,2\}^*$, we have that $(Rs)$ exists if and only if either $s$ does not begin with $1,2^{\lambda-1}$, or $s=12^i,2^{\lambda-1}1^i$ for some $i\ge0$.
        \item\label{itm:sons-twin-2} For $s\in\{1,2\}^*$, we have that $(Qs)$ exists if and only if either $s$ does not begin with $2,1^{\lambda-1}$, or $s=21^i,1^{\lambda-1}2^i$ for some $i\ge0$.
        \item\label{itm:sons-twin-common} A configuration $(C)$ is an iterated son of both $(R),(Q)$ if and only if $(C)$ is an iterated son of $(R2^\ell 1)=(Q1^{\lambda-1-\ell}2)$ for some $0\le \ell\le\lambda-1$. In that case, $\ell$ is uniquely determined.
    \end{enumerate}
\end{lem}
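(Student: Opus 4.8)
The plan is to translate everything into the single ``diagonal'' vector $\bd:=\ba_2-\ba_1+\be$, so that the defining relation of \Cref{def:twin-roots} becomes $\bx_1-\bx_2=\lambda\bd$, and the configuration conditions $\ba_1\not\ge\ba_2$, $\ba_2\not\ge\ba_1$ say precisely that $\bd\not\ge\mathbf{0}$ and $-\bd\not\ge\mathbf{0}$. The first step, and the technical heart of the whole lemma, is to pin down the set
\[
    J:=\{\,j\in\bbZ : \bx_2+j\bd\ge\mathbf{0}\,\}.
\]
I would first observe that $J$ is an interval of integers: coordinate by coordinate, the condition $(\bx_2)_r+j(\bd)_r\ge 0$ cuts out a half-line (or all of $\bbZ$, or $\emptyset$) in $j$, and an intersection of half-lines is an interval. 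Then I would locate its endpoints. We have $0\notin J$ because $\bx_2\not\ge\mathbf{0}$, and $\lambda\notin J$ because $\bx_2+\lambda\bd=\bx_1\not\ge\mathbf{0}$. The crucial point is that $1,\lambda-1\in J$, and this is forced not by non-degeneracy but by $(R)$ and $(Q)$ being genuine configurations: condition (3) of \Cref{def:configuration} applied to $(R)$ says $\ba_2+\bx_2$ dominates $\ba_1$ or $\ba_2$, and since $\bx_2\not\ge\mathbf{0}$ it cannot dominate $\ba_2$, so $\ba_2+\bx_2\ge\ba_1$, i.e. $\bx_2+\bd\ge\mathbf{0}$, i.e. $1\in J$; symmetrically condition (2) applied to $(Q)$ gives $\ba_1+\bx_1\ge\ba_2$, i.e. $\bx_2+(\lambda-1)\bd\ge\mathbf{0}$, i.e. $\lambda-1\in J$. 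By the interval property I conclude $J=\{1,2,\dots,\lambda-1\}$ exactly. (This is exactly where a naive attempt goes wrong: dropping the requirement that all four endpoints lie in $\pA$ allows $J$ to miss~$1$.)

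Next I would record the son-existence rules in vector form: for a configuration with vectors $(\bu,\bv)$ over the minimal points $\ba_1,\ba_2$, the first son exists iff $\bu-\bd\ge\mathbf{0}$ and the second iff $\bv+\bd\ge\mathbf{0}$ (read off from $\ba_1+\bu\ge\ba_2$, resp. $\ba_2+\bv\ge\ba_1$). With these I build the tree of $(R)$ explicitly. Iterated second sons produce the \emph{spine} $(R2^\ell)$, with vectors $(\bd,\bx_2+\ell\bd)$, which exists for exactly $0\le\ell\le\lambda-1$ since its second son requires $\ell+1\in J$. At each spine node the first son $B_\ell:=(R2^\ell1)$ always exists (its first vector is $\bd$, so $\bu-\bd=\mathbf{0}$) and has vectors $((\ell+1)\bd+\bx_2,\ \ell\bd+\bx_2)$. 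A short computation shows $B_\ell$ is a \emph{coinciding} configuration, its two endpoints differing by $\be$ since $(\ba_1+(\ell+1)\bd)-(\ba_2+\ell\bd)=\be$. Its son-structure is dictated by $J$: the first son of $B_\ell$ exists iff $\ell\in J$ and the second iff $\ell+1\in J$, so $B_0$ falls under \Cref{itm:2i} (only $2^i$ survives, giving $12^i$), $B_{\lambda-1}$ under \Cref{itm:1i} (only $1^i$, giving $2^{\lambda-1}1^{i}$), and $B_\ell$ for $1\le\ell\le\lambda-2$ under \Cref{itm:binary-tree} (a full binary tree). Since every node of the tree of $(R)$ is either on the spine or in the subtree of a unique $B_\ell$, assembling these cases yields \Cref{itm:sons-twin-1} (this is also the content of \Cref{itm:truncated12} with $h=0$, $k=\lambda-2$); \Cref{itm:sons-twin-2} is the mirror statement under the symmetry exchanging the two edges, which swaps $\ba_1\leftrightarrow\ba_2$, $1\leftrightarrow2$ and replaces $(\bx_2,\bd)$ by $(\bx_1,-\bd)$.

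For \Cref{itm:sons-twin-common} I would first verify the bridge identity $B_\ell=(R2^\ell1)=(Q1^{\lambda-1-\ell}2)$ by computing the vectors of $(Q1^{\lambda-1-\ell}2)$ and matching them with those of $B_\ell$; this exhibits each $B_\ell$ as an iterated son of both $(R)$ and $(Q)$, so its entire subtree consists of common iterated sons, giving the ``if'' direction. For the converse I use the explicit description above: every iterated son of $(R)$ is either a spine node $(R2^\ell)$ or lies in the subtree of some $B_\ell$, so it suffices to show the spine nodes are \emph{not} iterated sons of $(Q)$. Here I would invoke \Cref{prop:unique-roots}: each $(R2^\ell)$ is reached from $(R)$ by a pure chain of second sons that passes through no coinciding configuration (one checks $\bx_2+j\bd\notin\bbZ/2\bbZ$, else the node would be degenerate, contradicting non-degeneracy of the sons of a non-degenerate configuration), so its root is unique and equals $(R)$; were it an iterated son of $(Q)$ it would also have the distinct root $(Q)$, a contradiction. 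Thus a common iterated son lies in some subtree of $B_\ell$, and $\ell$ is unique because $(Rs)=(Rs')$ forces $s=s'$ (the injectivity noted after \Cref{prop:unique-roots}), so the prefixes $2^\ell 1$ cannot agree for different $\ell$. The main obstacle throughout is the determination of $J$ in the first step — everything afterwards is bookkeeping on the binary tree — together with the care in the converse of \Cref{itm:sons-twin-common} needed to exclude the spine nodes via uniqueness of roots.
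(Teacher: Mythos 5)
Your proof is correct and follows the same overall strategy as the paper: an explicit analysis of the tree over $(R)$ for Items \ref{itm:sons-twin-1}--\ref{itm:sons-twin-2}, then the bridge identity $(R2^\ell 1)=(Q1^{\lambda-1-\ell}2)$ together with exclusion of the spine nodes and the uniqueness statement of \Cref{prop:unique-roots} for Item \ref{itm:sons-twin-common}. The added value of your write-up is that it actually supplies the content behind the paper's ``Items 1 and 2 follow using \Cref{prop:binary-trees}'': your determination of $J=\{1,\dots,\lambda-1\}$ and the case analysis of the nodes $B_\ell$ via Items \ref{itm:2i}, \ref{itm:1i}, \ref{itm:binary-tree} of \Cref{prop:binary-trees} is exactly what is needed, and your exclusion of the spine via uniqueness of roots (checking non-coincidence along the chain of fathers) is a cleaner justification than the paper's remark that the edge $\ba_1\edge\ba_2+\be$ never occurs as a first edge in iterated sons of $(Q)$. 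One caveat: your parenthetical claim that Item \ref{itm:sons-twin-1} ``is also the content of \Cref{itm:truncated12} of \Cref{prop:binary-trees} with $h=0$, $k=\lambda-2$'' does not hold under a literal reading of that item, which with $h'=0$ would assert that $(R2s')$ exists for \emph{every} $s'$, in particular that $(R2^{\lambda})$ exists — contradicting your (correct) computation that $(R2^\ell)$ requires $\ell\in J$. So drop that parenthetical and let your direct derivation stand on its own; it is the argument that actually proves Items \ref{itm:sons-twin-1} and \ref{itm:sons-twin-2}.
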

\begin{proof}
    \Cref{itm:sons-twin-1} and \Cref{itm:sons-twin-2} follow using \Cref{prop:binary-trees}. For \Cref{itm:sons-twin-common}, we observe that the configurations $(R),(R2),\dots,(R2^{\lambda-1})$ can not be iterated sons of $(Q)$ (since in iterated sons of $(Q)$ the edge $\ba_1\edge\ba_2+\be$ never appears as first edge), and similarly $(Q),(Q1),\dots,(Q1^{\lambda-1})$ can not be iterated sons of $(R)$. It is easy to check that $(R2^\ell 1)=(Q1^{\lambda-1-\ell}2)$ for $0\le \ell\le\lambda-1$. If $(C)$ has more than one root, then by \Cref{prop:unique-roots} we can write $(C)=(Rs)$ for a unique $s\in\{1,2\}^*$, and since $(C)\not=(R),(R2),\dots,(R2^{\lambda-1})$ we obtain that $(s)$ must begin with $2^\ell 1$ for a unique $0\le \ell\le\lambda-1$. The conclusion follows.
\end{proof}

\subsection{Connections in a binary tree}\label{sec:binary-trees-connection}

\Cref{lem:connection-above} tells us that, if we can perform a connection from a configuration $(C)$, then one of the two sons exists, and we can obtain the same result by performing a connection from the son - this means that we can restrict our attention only to connections performed at very deep levels in the trees. On the contrary, \Cref{lem:connection-below} tells us that, if we can apply a connection from a son configuration $(C1)$, then we can obtain the same result by performing a connection from $(C)$ - this means that we can restrict our attention only to connections performed at the roots.

\begin{lem}[Moving to sons]\label{lem:connection-above}
    If a non-degenerate configuration $(C)$ allows for a connection {\rm(}with the first edge controlling the endpoint of the second{\rm)}, then $(C2)$ exists and it allows for a connection {\rm(}with the first edge controlling the endpoint of the second{\rm)}. Moreover, the result of the two connections is the same {\rm(}for suitable choices of the parameter $k$ of {\rm\Cref{sec:connection})}.
\end{lem}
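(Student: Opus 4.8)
The plan is to rewrite the connection move in the configuration language of \Cref{sec:configurations} and then track the parameter $k$ of \Cref{sec:connection} through the two computations. Write the edges of $(C)$ as $\ba_1\edge\ba_1+\bx_1$ and $\ba_2\edge\ba_2+\bx_2$. A connection in which the first edge controls the endpoint of the second fits the affine template of \Cref{sec:connection} with $p=\ba_1$, $\bw=\bx_1$, $q=\ba_2$ and $\bw_1=\ba_2+\bx_2-\ba_1$: the first edge is the controlling loop $p\edge p+\bw$, and the second edge is $q\edge p+\bw_1$. Applicability of this move is precisely the assertion that $\ba_1,\bx_1$ controls $\ba_2+\bx_2$, which by \Cref{def:control-a} yields $\bx_1\ge\mathbf{0}$, $\bw_1=\ba_2+\bx_2-\ba_1\ge\mathbf{0}$ and $\supp{\bw_1}\subseteq\supp{\bx_1}$. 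In particular $\ba_2+\bx_2\ge\ba_1$, which is exactly the condition recalled in \Cref{sec:binary-trees} for the second son $(C2)$ to exist; this establishes the first assertion.

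First I would verify that the same connection is available from $(C2)$, whose vectors are $\bx_1$ and $\bx_2+\bx_1$. This amounts to checking that $\ba_1,\bx_1$ controls $\ba_2+(\bx_2+\bx_1)$. Writing $\ba_2+\bx_2+\bx_1-\ba_1=\bw_1+\bx_1$ as a sum of two elements of $\pA$, positivity is immediate, and since the support of a sum of elements of $\pA$ is the union of the supports,
\[
\supp{\ba_2+\bx_2+\bx_1-\ba_1}=\supp{\bw_1}\cup\supp{\bx_1}\subseteq\supp{\bx_1}.
\]
Hence the connection with the first edge controlling the endpoint of the second can indeed be performed from $(C2)$; the essential point here is that positivity of $\bx_1$ is what keeps the support under control after passing to the son.

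Finally I would compute and compare the two outputs. Performing the connection from $(C)$ with parameter $k$ replaces the two edges by $\ba_2\edge\ba_2+\bx_1$ and $\ba_1\edge\ba_2+\bw_2$, where $\bw_2=k\bx_1-\bw_1$; using $\ba_2+\bw_2=\ba_1+(k\bx_1-\bx_2)$ to re-express the terminal endpoint over $\ba_1$, the output is the configuration with vectors $k\bx_1-\bx_2$ and $\bx_1$. Running the same connection from $(C2)$ with parameter $k'$ uses $\bw_1'=\bw_1+\bx_1$, so its new vector becomes $\bw_2'=k'\bx_1-\bw_1'=(k'-1)\bx_1-\bw_1$; choosing $k'=k+1$ gives $\bw_2'=\bw_2$, and the two connections produce literally the same pair of edges. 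The only genuinely error-prone step — and the main obstacle — is the bookkeeping inside the connection template: the controlling loop moves its base from $p$ to $q$ while the other edge is rebased from $q$ to $p$, so both terminal endpoints must be rewritten relative to the correct minimal point before the comparison; once this is done the equality of outputs and the shift $k\mapsto k+1$ are immediate.
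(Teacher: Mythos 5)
Your proof is correct and is exactly the computation the paper has in mind: the paper's own proof of this lemma is just ``Immediate from the definitions,'' and your write-up is the explicit unpacking of that claim — identifying $\bw_1=\ba_2+\bx_2-\ba_1$, noting that the control hypothesis gives $\ba_2+\bx_2\ge\ba_1$ (hence $(C2)$ exists), checking the support condition survives adding $\bx_1\ge\mathbf{0}$, and matching the outputs via $k\mapsto k+1$. The details all check out, including the re-basing $\ba_2+\bw_2=\ba_1+(k\bx_1-\bx_2)$, which agrees with the configuration displayed in the paper's proof of \Cref{lem:jump-to-root}.
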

\begin{proof}
    Immediate from the definitions.
\end{proof}

\begin{lem}[Moving to roots]\label{lem:connection-below}
    Let $(C)$ be a non-degenerate configuration and suppose that $(C1)$ exists.
    \begin{enumerate}
        \item $(C1)$ allows for a connection {\rm(}with the first edge controlling the endpoint of the second{\rm)} if and only if $(C2)$ exists. In that case, the result of the connection is $(C2)$ {\rm(}for a suitable choice of the parameter $k$ of {\rm\Cref{sec:connection})}.
        \item If $(C1)$ allows for a connection {\rm(}with the second edge controlling the endpoint of the first{\rm)}, then $(C)$ allows for a connection {\rm(}with the second edge controlling the endpoint of the first{\rm)}. Moreover, the result of the two connections is the same {\rm(}for suitable choices of the parameter $k$ of {\rm\Cref{sec:connection})}.
    \end{enumerate}
\end{lem}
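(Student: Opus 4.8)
The plan is to translate the phrase ``allows a connection (with the $i$-th edge controlling the endpoint of the other)'' into explicit inequalities on the minimal points $\ba_1,\ba_2$ and the vectors of the relevant configuration, by matching the data of the configuration against the affine description of the connection move in \Cref{sec:connection}. Recall that such a move takes a controlling loop $p\edge p+\bw$ together with a second edge $q\edge p+\bw_1$, where $\bw,\bw_1\in\pA$ and $\bw_1+\bw_2=k\bw$ for some $k\in\bbN$ and $\bw_2\in\pA$, and replaces them by $p\edge q+\bw_2$ and $q\edge q+\bw$. Thus ``the $i$-th edge controls the endpoint of the other'' means that we take that edge as the loop $p\edge p+\bw$ (so $\bw$ is its vector and $p$ its minimal point), and the control requirement is precisely the statement of \Cref{def:control-a} that $\bw_1\in\pA$ satisfies $\bw_1\le k\bw$.

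For part (1) I would set up the connection in $(C1)$ with the first edge, which has minimal point $\ba_1$ and vector $\bx_1+\bx_2$, as the controlling loop; so $p=\ba_1$, $\bw=\bx_1+\bx_2$, $q=\ba_2$, and $\bw_1=(\ba_2+\bx_2)-\ba_1$. The requirement $\bw_1\in\pA$ is literally the condition $\ba_2+\bx_2\ge\ba_1$ defining the existence of $(C2)$, which gives the ``only if'' direction at once. For the converse, assuming $(C2)$ exists, I would check the two remaining requirements: first, $\bw=\bx_1+\bx_2\ge\mathbf{0}$ follows by adding the inequalities $\ba_1+\bx_1\ge\ba_2$ (existence of $(C1)$) and $\ba_2+\bx_2\ge\ba_1$ (existence of $(C2)$); second, the control condition holds with $k=1$, because the identity $\bw_1=(\bx_1+\bx_2)-(\ba_1+\bx_1-\ba_2)$ exhibits $\bw_1$ as a difference of two nonnegative vectors, whence $\mathbf{0}\le\bw_1\le\bw$. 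Substituting $k=1$ and $\bw_2=\ba_1+\bx_1-\ba_2$ into the output $p\edge q+\bw_2$, $q\edge q+\bw$ then yields $\ba_1\edge\ba_1+\bx_1$ and $\ba_2\edge\ba_2+\bx_1+\bx_2$, which is exactly $(C2)$.

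For part (2) I would run both connections with the second edge as the controlling loop, so $p=\ba_2$, $\bw=\bx_2$ in both cases; the only difference is the endpoint of the first edge, namely $\ba_1+\bx_1+\bx_2$ in $(C1)$ versus $\ba_1+\bx_1$ in $(C)$, so the corresponding values of $\bw_1$ differ by exactly $\bx_2$. Assuming the connection in $(C1)$ is valid, the three conditions for $(C)$ follow: $\bx_2\ge\mathbf{0}$ is common; $\bw_1=\ba_1+\bx_1-\ba_2\ge\mathbf{0}$ is the hypothesis that $(C1)$ exists; and $\supp{\ba_1+\bx_1-\ba_2}\subseteq\supp{\bx_2}$ is inherited from the support condition in $(C1)$, since subtracting $\bx_2$ cannot introduce support outside $\supp{\bx_2}$. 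To match the two outputs I would choose the parameter $\tilde k=k-1$ for the connection in $(C)$; a one-line computation with $\bw_2=k\bx_2-\bw_1$ then shows the two values of $\bw_2$ coincide, so the resulting configurations agree.

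The difficulty here is bookkeeping rather than conceptual: one must keep straight which edge plays the role of the loop and which the transversal, and handle the support/control conditions of \Cref{def:control-a} with care. The one genuinely delicate point is in part (2), where the legitimacy of $\tilde k=k-1$ requires $k\ge1$; I would rule out $k=0$ by noting that $k=0$ forces $\bw_1=\mathbf{0}$, hence $\bx_2\le\mathbf{0}$, which together with $\bx_2\ge\mathbf{0}$ gives $\bx_2\in\bbZ/2\bbZ\le\bA$, contradicting the non-degeneracy of $(C1)$.
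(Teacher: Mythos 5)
Your proof is correct and follows the same route as the paper's: the paper's argument is exactly this unwinding of the connection move's affine data (it dispatches most of the checks with ``immediate from the definitions''), while you carry out the bookkeeping explicitly, including the correct choices $k=1$ in part (1) and $\tilde k=k-1$ in part (2). Your extra care in ruling out $k=0$ via non-degeneracy is a detail the paper leaves implicit, and it is handled correctly.
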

\begin{proof}
    Suppose that $(C)$ has minimal points $\ba_1,\ba_2$ and vectors $\bx_1,\bx_2$. Since $(C1)$ exists, we must have $\ba_1+\bx_1\ge\ba_2$.
    
    If $(C1)$ allows for a connection (with the first edge controlling the endpoint of the second) then $\ba_2+\bx_2\ge\ba_1$ and thus $(C2)$ exists. If $(C2)$ exists then $\ba_2+\bx_2\ge\ba_1$, and thus $\ba_1+\bx_1+\bx_2\ge\ba_2+\bx_2\ge\ba_1$, giving that $(C1)$ allows for a connection (with the first edge controlling the endpoint of the second). It is immediate to check that the result of the connection from $(C1)$ is $(C2)$, as desired.

    The second item is immediate from the definitions.
\end{proof}

\begin{lem}[Jumping between roots]\label{lem:jump-to-root}
    Let $(R)$ be a non-degenerate root and suppose that we are in the hypothesis to perform a connection {\rm(}with the first edge controlling the endpoint of the second{\rm)}. If we perform the connection, choosing the parameter $k$ of {\rm\Cref{sec:connection}} to be the minimum possible integer, then the result of the connection is a non-degenerate root.
\end{lem}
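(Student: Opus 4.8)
The plan is to push the connection move through the affine representation, compute the two vectors of the output configuration explicitly in terms of $\bx_1,\bx_2$ and the chosen parameter $k$, and then read off the two defining inequalities of a root, using the minimality of $k$ for one of them and the root hypothesis on $(R)$ for the other. First I would fix notation matching the configuration to the connection move of \Cref{sec:connection}. Write $(R)$ with minimal points $\ba_1,\ba_2$ and vectors $\bx_1,\bx_2$, the first edge $\ba_1\edge\ba_1+\bx_1$ being the one controlling the endpoint $\ba_2+\bx_2$ of the second. This means $p=\ba_1$, $\bw=\bx_1\in\pA$ (so in particular $\bx_1\ge\mathbf{0}$), $q=\ba_2$, and $\bw_1:=\ba_2+\bx_2-\ba_1$; the control hypothesis is precisely $\bw_1\ge\mathbf{0}$ and $\supp{\bw_1}\subseteq\supp{\bx_1}$, which guarantees that $\bw_2:=k\bx_1-\bw_1\ge\mathbf{0}$ holds for all large $k$, so a minimal admissible $k$ exists. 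Substituting into the affine description of the connection, the output configuration again has minimal points $\ba_1,\ba_2$, with new vectors $\bx_1'=k\bx_1-\bx_2$ (from the edge $\ba_1\edge\ba_2+\bw_2$) and $\bx_2'=\bx_1$ (from the edge $\ba_2\edge\ba_2+\bx_1$).

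Next I would record what the root hypothesis gives. Via $\bw_1=\ba_2+\bx_2-\ba_1$, the two defining inequalities $\ba_1+\bx_1\not\ge\ba_2+\bx_2$ and $\ba_2+\bx_2\not\ge\ba_1+\bx_1$ of the root $(R)$ become $\bx_1\not\ge\bw_1$ and $\bw_1\not\ge\bx_1$. The first of these forces $k\ge2$: if some $k\le1$ were admissible, then $\bw_1\le k\bx_1\le\bx_1$ (using $\bx_1\ge\mathbf{0}$), contradicting $\bx_1\not\ge\bw_1$.

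Then I would verify both root inequalities for the output. Computing the difference of endpoints gives $\ba_1+\bx_1'-(\ba_2+\bx_2')=(k-1)\bx_1-\bw_1$, so $\ba_1+\bx_1'\not\ge\ba_2+\bx_2'$ is equivalent to $(k-1)\bx_1\not\ge\bw_1$, which is exactly the minimality of $k$ (i.e.\ the statement that $k-1$ is not admissible). For the opposite inequality, $\ba_2+\bx_2'\not\ge\ba_1+\bx_1'$ is equivalent to $\bw_1\not\ge(k-1)\bx_1$; since $k\ge2$ and $\bx_1\ge\mathbf{0}$ we have $(k-1)\bx_1\ge\bx_1$, so $\bw_1\ge(k-1)\bx_1$ would give $\bw_1\ge\bx_1$, contradicting $\bw_1\not\ge\bx_1$. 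Hence the output is a root. Its non-degeneracy is then automatic, since performing a slide or connection on a non-degenerate configuration yields a non-degenerate one (a degenerate configuration reaches only finitely many GBS graphs, and by reversibility of the moves the same holds for its neighbours).

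The subtle point — and the only place where real care is needed — is the double role played by the minimal choice of $k$: the root inequality in the ``$\bx_1'$-direction'' is literally the non-admissibility of $k-1$, whereas the inequality in the ``$\bx_2'$-direction'' is where the full root hypothesis on $(R)$ genuinely enters, through the conjunction of $k\ge2$ (itself a consequence of $\bx_1\not\ge\bw_1$) and $\bx_1\ge\mathbf{0}$. Keeping these two mechanisms separate is the crux; the remaining algebra is routine bookkeeping of the $\pA$-inequalities.
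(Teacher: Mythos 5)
Your proof is correct and follows essentially the same route as the paper's: compute the new vectors $k\bx_1-\bx_2$ and $\bx_1$ explicitly, derive the first root inequality from the minimality of $k$, and derive the second from $k\ge2$ together with the root hypothesis $\ba_2+\bx_2\not\ge\ba_1+\bx_1$. The only difference is cosmetic (your reformulation via $\bw_1=\ba_2+\bx_2-\ba_1$), and your closing remark on non-degeneracy matches the paper's standing convention that slides and connections preserve non-degenerate configurations.
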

\begin{proof}
    Suppose that $(R)$ has minimal points $\ba_1,\ba_2$ and vectors $\bx_1,\bx_2$. Suppose that $\bx_1\ge\mathbf{0}$ and that $\ba_1,\bx_1$ controls $\ba_2+\bx_2$, so that we are in the hypothesis to perform a connection, with the first edge controlling the endpoint of the second. Consider the minimum integer $k$ such that $\ba_1+k\bx_1\ge\ba_2+\bx_2$, and observe that $k\ge2$, otherwise $(R)$ would not be a root. We now perform a connection obtaining the configuration
    \[
        \begin{cases}
            \ba_1\edge\ba_1+k\bx_1-\bx_2\\
            \ba_2\edge\ba_2+\bx_1
        \end{cases}
    \]
    Suppose that $\ba_1+k\bx_1-\bx_2\ge\ba_2+\bx_1$: then $\ba_1+(k-1)\bx_1\ge\ba_2+\bx_2$ contradicting the minimality of $k$. Suppose that $\ba_1+k\bx_1-\bx_2\le\ba_2+\bx_1$: then $\ba_1+\bx_1\le\ba_1+(k-1)\bx_1\le\ba_2+\bx_2$ contradicting the fact that $(R)$ was a root. This shows that the configuration obtained after the connection is a root, as desired.
\end{proof}

\subsection{The sequence of roots}\label{sec:sequence-of-roots}

\begin{defn}\label{def:previous-and-next-root}
    Let $(R)$ be a non-degenerate root.
    \begin{enumerate}
        \item If $(R)$ allows for a connection {\rm(}with the first edge controlling the endpoint of the second{\rm)}, define the root $(R^-)$ as the one obtained performing the connection, as in {\rm\Cref{lem:jump-to-root}}.
        \item If $(R)$ allows for a connection {\rm(}with the second edge controlling the endpoint of the first{\rm)}, define the root $(R^+)$ as the one obtained performing the connection, as in {\rm\Cref{lem:jump-to-root}}.
    \end{enumerate}
\end{defn}

It is immediate from the definitions that, if $(R^-)$ exists, then $(R^{-+})$ exists and it is equal to $(R)$; and similarly on the other side. Given a non-degenerate root $(R)$, we can consider the sequence of non-degenerate roots
$$\dots,(R^{--}),(R^-),(R),(R^+),(R^{++}),\dots$$
obtained by an iterated application of \Cref{lem:jump-to-root} (see \Cref{fig:binary-trees-connection}). On each side, the sequence can be finite or infinite. If an element $\bv\in\pA$ plays the role of $\bv=\bx_1$ in $(R)$, then we have that the same element plays the role of $\bv=\bx_2$ in $(R^-)$. Thus, associated with the sequence of roots we have a unique sequence
$$\dots,\bv_{-2},\bv_{-1},\bv_0,\bv_1,\bv_2,\dots$$
with $\bv_i\in\bA$, and such that $(R^{+i})$ has vectors $\bv_i,\bv_{i+1}$. As for the sequence of roots, this sequence of elements of $\bA$ can be finite or infinite on each side. Moreover, we have that $\bv_i\ge\mathbf{0}$ except possibly for the first/last element of the sequence, if the sequence is finite on the left/right.

\begin{rmk}
    One might be worried about twin roots appearing in the sequence above. However, it is easy to check that a twin root never allows for any connection. Thus, if we start with a non-twin root, then no root along the sequence is a twin root.
\end{rmk}

\begin{figure}[H]
\centering
\includegraphics[width=\textwidth]{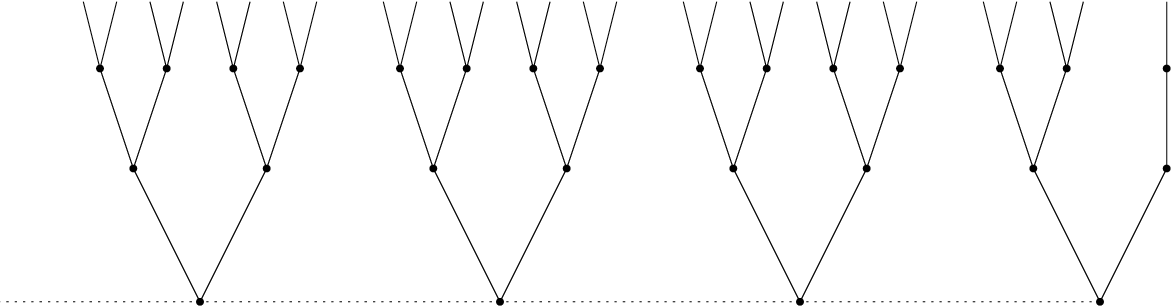}
\centering
\caption{A sequence of binary trees. The roots are related to each other by means of connection moves. On the right, the sequence terminates at a root that allows for connection only on one side. On the left, the sequence is infinite.}
\label{fig:binary-trees-connection}
\end{figure}

The following \Cref{prop:sequence-of-roots} allows us to effectively compute the sequence $\bv_i$ starting from the two initial values $\bv_0,\bv_1$.

\begin{prop}[Sequence of roots]\label{prop:sequence-of-roots}
    Let $(R)$ be a non-degenerate non-twin root, with vectors $\bv_0,\bv_1$. Let $\dots,\bv_{-1},\bv_0,\bv_1,\bv_2,\dots$ be the corresponding sequence of vectors. Then we have the following:
    \begin{enumerate}
        \item\label{itm:compute-next} If $\bv_i,\bv_{i+1},\bv_{i+2}$ exist, then
        \[
            \bv_{i+2}=k\bv_{i+1}-\bv_i,
        \]
        where $k\ge2$ is the minimum natural number such that $k\bv_{i+1}-\bv_i\ge\ba_1-\ba_2$.
        \item\label{itm:compute-previous} If $\bv_{i-1},\bv_i,\bv_{i+1}$ exist, then
        \[
            \bv_{i-1}=h\bv_i-\bv_{i+1},
        \]
        where $h\ge2$ is the minimum natural number such that $h\bv_i-\bv_{i+1}\ge\ba_2-\ba_1$.
        \item\label{itm:compute-last} If the sequence $\dots,\bv_{-2},\bv_{-1},\bv_0,\bv_1,\bv_2,\dots$ is infinite on the right, then $\bv_i\le\bv_{i+1}$ for some $i\in\bbZ$. For every such $i$, the subsequence $\bv_{i+1},\bv_{i+2},\bv_{i+3},\dots$ is an arithmetic progression.
        \item\label{itm:compute-first} If the sequence $\dots,\bv_{-2},\bv_{-1},\bv_0,\bv_1,\bv_2,\dots$ is infinite on the left, then $\bv_{i-1}\ge\bv_i$ for some $i\in\bbZ$. For every such $i$, the subsequence $\dots,\bv_{i-3},\bv_{i-2},\bv_{i-1}$ is an arithmetic progression.
    \end{enumerate}
\end{prop}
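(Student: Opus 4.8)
The plan is to separate the two recursive formulas (Items \ref{itm:compute-next}--\ref{itm:compute-previous}) from the two asymptotic statements (Items \ref{itm:compute-last}--\ref{itm:compute-first}), and to organise everything around the single element $\bc:=\ba_1-\ba_2\in\bA$, which records the relative position of the two minimal points. Since $(R)$ is a non-degenerate configuration, property (1) of \Cref{def:configuration} gives $\bc\not\ge\mathbf{0}$ and $-\bc\not\ge\mathbf{0}$, so $\bc$ has both strictly positive and strictly negative $\bbZ$-components. For Item \ref{itm:compute-next} I would unwind \Cref{def:previous-and-next-root} through \Cref{lem:jump-to-root} applied with the two edges exchanged (the ``second edge controlling the endpoint of the first'' version). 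Writing $(R^{+i})$ with vectors $\bv_i=\bx_1$ and $\bv_{i+1}=\bx_2$, the lemma outputs a new root with vectors $\bx_2$ and $k\bx_2-\bx_1$, where $k\ge2$ is minimal with $\ba_2+k\bx_2\ge\ba_1+\bx_1$; rewriting the latter as $k\bv_{i+1}-\bv_i\ge\bc$ and reading off $\bv_{i+2}=k\bv_{i+1}-\bv_i$ yields the claim. Item \ref{itm:compute-previous} is the mirror statement, coming from \Cref{lem:jump-to-root} in its stated (``first edge'') form with $\ba_2-\ba_1=-\bc$; the only care needed is the index bookkeeping, governed by the convention that a vector playing the role of $\bx_1$ in $(R)$ plays the role of $\bx_2$ in $(R^-)$.

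For the asymptotic statements I would isolate two structural facts. Put $\bd_i:=\bv_{i+1}-\bv_i$ and write $k_i\ge2$ for the coefficient of Item \ref{itm:compute-next}, so that $\bv_{i+2}=k_i\bv_{i+1}-\bv_i$. The recursion then gives
\[
    \bd_{i+1}=\bd_i+(k_i-2)\bv_{i+1},
\]
and since $k_i\ge2$ and every interior vector satisfies $\bv_{i+1}\ge\mathbf{0}$, the sequence $(\bd_i)$ is \emph{componentwise non-decreasing}; in particular, once some $\bd_i\ge\mathbf{0}$ one has $\bd_j\ge\mathbf{0}$ for all $j\ge i$. Secondly, the defining inequality of Item \ref{itm:compute-next} shows that every vector produced by a connection dominates $\bc$, that is $\bv_m\ge\bc$ for every index $m$ that is not among the first two of the sequence.

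To prove Item \ref{itm:compute-last} I would establish existence and then the arithmetic shape. For existence of an $i$ with $\bv_i\le\bv_{i+1}$: if no such $i$ existed, then for every $j$ the negative set $\{r:(\bd_j)_r<0\}$ would be non-empty; as these sets are non-increasing in $j$ (components of $\bd_j$ never decrease) and $\cP(\Gamma,\psi)$ is finite, some fixed prime $r$ stays negative for all large $j$, where $(\bd_j)_r$ is a non-decreasing integer sequence bounded above by $-1$, hence eventually constant at some value $\le-1$; the identity $(\bv_{j+1})_r=(\bv_j)_r+(\bd_j)_r$ would then force $(\bv_j)_r\to-\infty$, contradicting $\bv_j\ge\mathbf{0}$ on the infinite right tail. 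For the arithmetic shape, given $\bd_i\ge\mathbf{0}$ I take any $j\ge i+1$: then $\bd_j\ge\mathbf{0}$ by monotonicity and $\bv_{j+1}\ge\bc$ because $j+1$ is not among the first two indices, so
\[
    2\bv_{j+1}-\bv_j=\bv_{j+1}+\bd_j\ge\bv_{j+1}\ge\bc,
\]
which forces the minimal coefficient $k_j$ to equal $2$. Hence $\bd_{j+1}=\bd_j$ for all $j\ge i+1$, i.e. $\bv_{i+1},\bv_{i+2},\dots$ is an arithmetic progression. Item \ref{itm:compute-first} follows by running the identical argument on the reversed sequence, which interchanges $\ba_1$ and $\ba_2$ (replacing $\bc$ by $-\bc$) and turns Item \ref{itm:compute-next} into Item \ref{itm:compute-previous}.

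The step I expect to be most delicate is pinning down the indices in the arithmetic-progression claim: the progression begins at $\bv_{i+1}$, so one needs $k_j=2$ only for $j\ge i+1$, whereas the coefficient $k_i$ fixing the common difference $\bd_{i+1}$ may legitimately exceed $2$. It is exactly this shift that allows the bound $\bv_{j+1}\ge\bc$ to apply even when $\bv_i$ is a left endpoint failing to dominate $\bc$, and overlooking it would produce a false statement at the boundary. The existence argument of Item \ref{itm:compute-last} is the other genuinely nontrivial point, being where the finiteness of $\cP(\Gamma,\psi)$ together with the non-negativity of interior vectors is used to exclude an indefinitely decreasing behaviour.
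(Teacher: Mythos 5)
Your proposal is correct and follows essentially the same route as the paper: Items \ref{itm:compute-next}--\ref{itm:compute-previous} by unwinding \Cref{def:previous-and-next-root} via \Cref{lem:jump-to-root}, then the monotonicity of the difference sequence $\bv_{i+1}-\bv_i$ (coming from $k_i\ge2$ and $\bv_{i+1}\ge\mathbf{0}$ for interior vectors), the finiteness of $\cP(\Gamma,\psi)$ to produce an index with $\bv_i\le\bv_{i+1}$, and the bound $\bv_{j+1}\ge\ba_1-\ba_2$ to force $k_j=2$ thereafter. Your explicit bookkeeping with $\bd_i$ and $\bc$, and the remark about why the progression starts at $\bv_{i+1}$ rather than $\bv_i$, are just cleaner renderings of the same argument.
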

\begin{proof}
    \Cref{itm:compute-next} is immediate from the definitions. Note that $k\bv_{i+1}-\bv_i\ge\ba_1-\ba_2$ for $k=0$ would imply that $\ba_2\ge\ba_1+\bv_i$, and thus we would be in the case of a twin root, and no connection would be possible. Note that $k\bv_{i+1}-\bv_i\ge\ba_1-\ba_2$ for $k=1$ would imply that $\ba_2+\bv_{i+1}\ge\ba_1+\bv_i$ contradicting the fact that $(R^{+i})$ is root. Therefore $k\ge2$. Similarly for \Cref{itm:compute-previous}.

    Take $r\in\cP(\Gamma,\psi)$ with the projection $\pi_r:\bA\rightarrow\bbZ$ on the corresponding component, and consider the sequence of integers
    \begin{equation}\label{eq:arithmetic-progressions}
        \dots,\pi_r(\bv_i)-\pi_r(\bv_{i-1}),\pi_r(\bv_{i+1})-\pi_r(\bv_i),\pi_r(\bv_{i+2})-\pi_r(\bv_{i+1}),\dots.
    \end{equation}
    If $\bv_{i-1},\bv_{i+1}$ exist, then $\bv_i\ge\mathbf{0}$ and $\bv_{i-1}+\bv_{i+1}=k\bv_i\ge 2\bv_i$ for some $k\ge3$. Therefore Sequence (\ref{eq:arithmetic-progressions}) is a non-decreasing sequence of integers.

    For \Cref{itm:compute-last}, suppose that the sequence is infinite on the right. For every $r\in\cP(\Gamma,\psi)$, there must be an index $i\in\bbZ$ such that $\pi_r(\bv_{i+1})-\pi_r(\bv_i)\ge0$ (otherwise $\pi_r(\bv_i)\rar-\infty$ for $i\rar+\infty$, contradiction). Since we have finitely many choices of $r\in\cP(\Gamma,\psi)$, there must be an index $i\in\bbZ$ such that $\bv_{i+1}\ge\bv_i$. If $i\in\bbZ$ satisfies $\bv_i\le\bv_{i+1}$, then $\bv_i\le\bv_{i+1}\le\bv_{i+2}\le\dots$ since Sequence (\ref{eq:arithmetic-progressions}) is non-decreasing. For every $j\ge i+1$ we must have $\ba_2+\bv_{j+1}\ge\ba_1$ (since $\bv_{j-1}$ exists), and thus $2\bv_{j+1}-\bv_j\ge\ba_1-\ba_2$, yielding that $\bv_{j+2}=2\bv_{j+1}-\bv_j$. It follows that $\bv_{i+1},\bv_{i+2},\bv_{i+3},\dots$ is an arithmetic progression. \Cref{itm:compute-first} is analogous.
\end{proof}

\subsection{Classification of GBS graphs with one vertex and two edges}

We are now ready to classify GBS graphs of groups with one vertex and two edges.

\begin{thm}\label{thm:main}
    There is an algorithm that, given two totally reduced GBS graphs $(\Gamma,\psi),(\Delta,\phi)$, where $(\Gamma,\psi)$ has one vertex and two edges, decides whether or not there is a sequence of slides, swaps, connections going from $(\Gamma,\psi)$ to $(\Delta,\phi)$. In case such a sequence exists, the algorithm also computes one such sequence.
\end{thm}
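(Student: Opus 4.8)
The plan is to reduce the isomorphism problem entirely to an analysis of the sequence of roots developed in \Cref{sec:sequence-of-roots}, combined with the binary-tree structure of \Cref{sec:binary-trees}. By \Cref{thm:sequence-new-moves}, two totally reduced GBS graphs have isomorphic fundamental groups if and only if there is a sequence of slides, swaps, connections (and sign-changes/inductions, which we may handle up front in finitely many ways) relating them. Since $(\Gamma,\psi)$ has one vertex and two edges, $\abs{V(\Delta)}=1$ as well, and both graphs are described by configurations in the single copy of $\pA$. The degenerate case only produces finitely many reachable graphs, so it can be checked by brute force; hence I would assume both configurations are non-degenerate, where the only applicable moves are slides and connections.

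The key structural reduction is the following. First I would use \Cref{prop:unique-roots} to pass from $(\Gamma,\psi)$ and $(\Delta,\phi)$ to their (at most two, in the twin case) root configurations, recording the words $s,t\in\{1,2\}^*$ that recover the originals as iterated sons. By \Cref{lem:connection-above} and \Cref{lem:connection-below}, every connection move can be ``pushed down'' to act at the level of roots: connections at interior nodes of a binary tree are redundant with connections applied to a son, and connections applied to a son are realized by connections applied at the root. Consequently, any sequence of slides and connections relating the two configurations can be rearranged so that all connections occur between roots. This means the two graphs are related by moves if and only if the root of one lies in the same \textbf{sequence of roots} $\dots,(R^{--}),(R^-),(R),(R^+),(R^{++}),\dots$ as the root of the other. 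Thus the isomorphism question becomes: do $(\Gamma,\psi)$ and $(\Delta,\phi)$ have roots that belong to a common root sequence?

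To decide this algorithmically, I would invoke \Cref{prop:sequence-of-roots}. The root of $(\Gamma,\psi)$ determines initial data $\bv_0,\bv_1$, and parts \Cref{itm:compute-next}–\Cref{itm:compute-previous} give an explicit recurrence $\bv_{i+2}=k\bv_{i+1}-\bv_i$ (with $k$ the minimal admissible integer) propagating the sequence in both directions; the twin-root case is excluded along the sequence by the remark preceding \Cref{prop:sequence-of-roots}, and handled separately via \Cref{lem:twin-binary-trees}. The crucial point is finiteness: although the sequence of roots may be infinite, parts \Cref{itm:compute-last}–\Cref{itm:compute-first} show that on any infinite side the sequence eventually becomes an honest arithmetic progression once $\bv_i\le\bv_{i+1}$ (resp. $\bv_{i-1}\ge\bv_i$) occurs. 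Since Sequence~(\ref{eq:arithmetic-progressions}) is non-decreasing componentwise, such an index is reached after finitely many steps, and from that point on membership in the progression is a linear-arithmetic test. Therefore the entire root sequence is described by finitely many ``transient'' roots together with two arithmetic progressions, all computable by iterating the recurrence until the eventual-arithmetic-progression regime is detected.

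The algorithm then runs as follows: compute the finite set of roots of $(\Delta,\phi)$ (one, or a twin pair), compute the root(s) of $(\Gamma,\psi)$, generate the finitely many transient roots of the root sequence of $(\Gamma,\psi)$ together with the parameters of its two terminal arithmetic progressions, and test whether any root of $(\Delta,\phi)$ appears either among the transient roots or inside one of the progressions. If a match is found, chaining together the connection moves from \Cref{lem:jump-to-root} with the slide words $s,t$ recovered from \Cref{prop:unique-roots} produces an explicit realizing sequence; otherwise the groups are non-isomorphic. The main obstacle is proving the effective termination in part three: one must argue that the transient portion of the root sequence has computably bounded length, i.e. that the componentwise-non-decreasing differences in Sequence~(\ref{eq:arithmetic-progressions}) cross into the non-negative regime after a number of steps bounded in terms of the input, so that ``iterate until the arithmetic-progression regime begins'' is a genuine halting procedure rather than an unbounded search. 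I expect this to follow from tracking the finitely many prime-components and bounding how negative the initial differences $\pi_r(\bv_{i+1})-\pi_r(\bv_i)$ can be, but it is the step that requires the most care.
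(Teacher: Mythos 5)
Your proposal reproduces the paper's argument for what is in fact its central case, and does so accurately: reduce to non-degenerate configurations, pass to roots via \Cref{prop:unique-roots}, use \Cref{lem:connection-above} and \Cref{lem:connection-below} to push all connections to the roots, compute the sequence of roots via the recurrence of \Cref{prop:sequence-of-roots}, and test membership in the finite transient part or in one of the two terminal arithmetic progressions. Your final worry about effective termination is not actually a gap: you do not need an a priori bound on the length of the transient part. At each step the algorithm either detects that the sequence terminates (no further connection is possible) or computes the next root; by \Cref{itm:compute-last} and \Cref{itm:compute-first} of \Cref{prop:sequence-of-roots}, if the sequence is infinite on a given side then the monotonicity of the difference sequence forces $\bv_i\le\bv_{i+1}$ (resp. $\bv_{i-1}\ge\bv_i$) after finitely many steps, at which point the progression regime is detected. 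One of these two detectable events must occur, so the search halts without any explicit bound.

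The genuine gap is that your opening reduction --- ``both graphs are described by configurations in the single copy of $\pA$'' --- is false in general, and the paper spends the first half of its proof on exactly the cases where it fails. A configuration in the sense of \Cref{def:configuration} requires two \emph{incomparable} minimal points $\ba_1\not\ge\ba_2$ and $\ba_2\not\ge\ba_1$, i.e.\ it only arises when the two edges lie in a common quasi-conjugacy class with exactly two minimal regions. You must separately handle: (i) the two edges lying in different quasi-conjugacy classes (ruled in or out by the invariants of the companion paper); (ii) a common quasi-conjugacy class with three minimal regions (only finitely many reachable graphs, so brute force works); (iii) a common class with four minimal regions (shown to be impossible); and, most importantly, (iv) a common class with a single minimal region, where one edge controls the other. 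In case (iv) none of the root/binary-tree machinery applies --- there are no two incomparable minimal points to anchor the configuration --- and the paper instead reduces to the ``controlled GBS graph'' results of its predecessor, a genuinely different argument that also requires a swap move, which your proposal otherwise never uses. Without this case analysis the algorithm you describe simply does not accept all inputs covered by the statement of \Cref{thm:main}.
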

\begin{proof}
    We can assume that both GBS graphs have one vertex and two edges. If the two edges of $(\Gamma,\psi)$ are in different quasi-conjugacy classes, then they are not isomorphic as in \cite{ACK-iso2}, it is proven that they quasi-conjugacy class is an isomorphism invariant;
    thus we assume that they belong to a common quasi-conjugacy class. Note that the minimal regions of this quasi-conjugacy class consist basically of a point each (modulo the $\bbZ/2\bbZ$ component).
    
    CASE 1: Suppose that the quasi-conjugacy class has four minimal regions $M_1,M_2,M_3,M_4$. By \cite{ACK-iso2}, 
    the four endpoints of the two edges must lie one in each region. Suppose that the first edge has endpoints in $M_1,M_2$ and the other one, $M_3,M_4$. But then $M_1\cup M_2$ is a quasi-conjugacy class by itself, contradicting the assumption that the two edges are in a quasi-conjugacy class.

    CASE 2: Suppose that the quasi-conjugacy class has three minimal regions $M_1,M_2,M_3$. By \cite{ACK-iso2},
     three of the four endpoints of the edges must lie in the minimal regions, suppose we have one edge with endpoints in $M_1,M_2$ and the other with one endpoint in $M_3$. The other endpoint must lie in one of $M_1,M_2$, otherwise $M_1\cup M_2$ would be a quasi-conjugacy class by itself. It follows that we can reach only finitely many GBS graphs, and so we can describe the isomorphic GBS graphs.

    CASE 3: Suppose that the quasi-conjugacy class has two minimal regions $M_1,M_2$. If one edge has no endpoint in any of them, then by \cite{ACK-iso2} 
    the other endpoint must have one endpoint in each, and thus $M_1\cup M_2$ is a quasi-conjugacy class by itself, a contradiction. Thus each edge must have at least one endpoint in each region, and thus we are in a configuration
    \[
        (R)=\begin{cases}
            \ba_1\edge\ba_1+\bv_1\\
            \ba_2\edge\ba_2+\bv_2
        \end{cases}
    \]
    where $\ba_1,\ba_2$ lie in $M_1,M_2$ respectively. If the configuration is degenerate, then we can only reach finitely many GBS graphs, and we are done. Otherwise, up to slides, we can assume that $(R)$ is a root.
    
    If $(R)$ is not a twin root, then every slide and connection will always lead to another configuration, with the same endpoints $\ba_1,\ba_2$, changing only the two endpoints $\ba_1+\bv_1,\ba_2+\bv_2$. We consider the sequence of vectors $\dots,\bv_{-1},\bv_0,\bv_1,\bv_2,\dots$ as in \Cref{sec:sequence-of-roots}, and note that by \Cref{prop:sequence-of-roots} we can algorithmically compute a finite segment of it, until for each side, the sequence either terminates or becomes an infinite arithmetic progression. Thus $(\Delta,\phi)$ must be a configuration of the same kind, and we can compute a root for $(\Delta,\phi)$, say with edges $\ba_1\edge\ba_1+\bu_1$ and $\ba_2\edge\ba_2+\bu_2$. We algorithmically check whether $\bu_1,\bu_2$ is a subsegment of our finite sequence, or whether it belongs to one the arithmetic progressions. If it does, then we can explicitly compute a sequence of moves going from $(\Gamma,\psi)$ to $(\Delta,\phi)$, otherwise such a sequence of moves does not exist.

    If $(R)$ is a twin root, we call
    \[
        (R)=\begin{cases}
            \ba_1\edge\ba_2+\be\\
            \ba_2\edge\ba_2+\bx_2
        \end{cases}
        \qquad\text{and}\qquad
        (Q)=\begin{cases}
            \ba_1\edge\ba_1+\bx_1\\
            \ba_2\edge\ba_1+\be
        \end{cases}
    \]
    with $\be\in\bbZ/2\bbZ\le\bA$ and $\bx_1,\bx_2\not\ge\mathbf{0}$ satisfying $\bx_1+\lambda\ba_1=\bx_2+\lambda\ba_2+\lambda\be$ for some integer $\lambda\ge2$. If from $(R)$ we perform a slide changing the endpoint $\ba_2$ of the second edge, then we end up with two edges $\ba_1\edge\ba_2+\be$ and $\ba_1+\be\edge\ba_2+\bx_2$, and the root of this configuration is a twin root $(Q')$, with twin $(R')$, given by
    \[
        (R')=\begin{cases}
            \ba_1+\be\edge\ba_2\\
            \ba_2+\be\edge\ba_2+\bx_2+\be
        \end{cases}
        \qquad\text{and}\qquad
        (Q')=\begin{cases}
            \ba_1+\be\edge\ba_1+\bx_1+\be\\
            \ba_2+\be\edge\ba_1
        \end{cases}
    \]
    and note that these coincide with $(R),(Q)$ if $\be$ is trivial. In any case, the configurations that we can reach from $(R)$ by means of moves are exactly the iterated sons of $(R),(Q),(R'),(Q')$. Thus $(\Delta,\phi)$ must be a configuration of the same kind, and we can compute a root for $(\Delta,\phi)$, and check whether it coincides with one of $(R),(Q),(R'),(Q')$. If it does, then we can explicitly compute a sequence of moves going from $(\Gamma,\psi)$ to $(\Delta,\phi)$, otherwise such a sequence of moves does not exist.

    CASE 4: Suppose that the quasi-conjugacy class has one minimal region $M$. Up to performing some slide an swap, we can assume that $(\Gamma,\psi)$ has edges $\ba\edge\ba+\bw$ and $\bb\edge\bb+\bx$, such that $\ba,\bw$ controls $\bb,\bb+\bx$. But now it is a controlled GBS graph, and the conclusion follows from the results of \cite{ACK-iso1}.
\end{proof}

\begin{cor}\label{cor:main}
    There is an algorithm that, given two totally reduced GBS graphs $(\Gamma,\psi),(\Delta,\phi)$, where $(\Gamma,\psi)$ has one vertex and two edges, decides whether or not the corresponding GBS groups are isomorphic, and in case they are, computes a sequence of sign-changes, inductions, slides, swaps, connections going from $(\Gamma,\psi)$ to $(\Delta,\phi)$.
\end{cor}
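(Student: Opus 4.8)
The plan is to reduce \Cref{cor:main} directly to \Cref{thm:main}, which already handles sequences of slides, swaps, and connections between totally reduced GBS graphs. The key structural input is \Cref{thm:sequence-new-moves}: two totally reduced GBS graphs have isomorphic fundamental groups if and only if one can be obtained from the other by a sequence of slides, swaps, connections, sign-changes, and inductions, \emph{with all sign-changes and inductions occurring at the beginning}. This normal form is what makes the reduction possible, since it lets us separate the ``initial'' moves (sign-changes and inductions) from the ``main'' moves (slides, swaps, connections) already treated by \Cref{thm:main}.

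\medskip

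First I would observe that, by \Cref{thm:sequence-new-moves}, the GBS groups of $(\Gamma,\psi)$ and $(\Delta,\phi)$ are isomorphic if and only if there is a GBS graph $(\Delta',\phi')$ obtained from $(\Delta,\phi)$ by sign-changes and inductions alone, such that $(\Gamma,\psi)$ is connected to $(\Delta',\phi')$ by a sequence of slides, swaps, and connections. The algorithm therefore enumerates all candidate intermediate graphs $(\Delta',\phi')$ reachable from $(\Delta,\phi)$ by the initial moves, and for each one invokes the algorithm of \Cref{thm:main} with input $(\Gamma,\psi)$ and $(\Delta',\phi')$. If any invocation succeeds, we concatenate the initial moves taking $(\Delta,\phi)$ to $(\Delta',\phi')$ with the sequence of slides, swaps, connections produced by \Cref{thm:main}, yielding the desired full sequence; if every invocation fails, the groups are not isomorphic. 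The vertex bijection and the choice of sign-changes can each be guessed in finitely many ways, as noted in the discussion following \Cref{thm:sequence-new-moves}.

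\medskip

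The one genuine obstacle is that the number of candidate intermediate graphs $(\Delta',\phi')$ could \emph{a priori} be infinite, since an induction move can multiply edge labels by an integer $\ell$ with $\ell\divides n^k$ for arbitrarily large $k$, and there is no immediate bound on how many inductions to apply. Here I would appeal to the effective bound from \cite{ACK-iso1}: for isomorphic totally reduced GBS graphs there is an explicit computable bound on the number of vertices and edges appearing along a realizing sequence of moves, and in particular on the sizes of the labels. This bounds the number of relevant intermediate graphs $(\Delta',\phi')$ that need to be tested, rendering the enumeration finite and hence the overall procedure an algorithm. (As the excerpt remarks after \Cref{thm:sequence-new-moves}, guessing the inductions at the beginning of the sequence ``usually represents a marginal issue,'' which is precisely the point being formalized here.)

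\medskip

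With the enumeration made finite, correctness and termination both follow: the procedure halts because each of the finitely many calls to \Cref{thm:main} halts, and it returns ``isomorphic'' exactly when some admissible sequence of the five move types exists, which by \Cref{thm:sequence-new-moves} is equivalent to the two GBS groups being isomorphic. The computed witness sequence is obtained by prepending the initial sign-changes and inductions to the output of \Cref{thm:main}, giving a full sequence of sign-changes, inductions, slides, swaps, connections from $(\Gamma,\psi)$ to $(\Delta,\phi)$ as claimed.
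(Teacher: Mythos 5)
Your overall reduction (guess the sign-changes, push the inductions to the front via \Cref{thm:sequence-new-moves}, then call \Cref{thm:main}) matches the paper's first step, but the way you dispose of the inductions has a genuine gap. You claim the set of intermediate graphs $(\Delta',\phi')$ reachable from $(\Delta,\phi)$ by inductions can be made finite by appealing to the effective bound of \cite{ACK-iso1}; but that bound controls the \emph{number of vertices and edges} along a realizing sequence, not the sizes of the edge labels. An induction multiplies labels by any $\ell$ with $\ell\divides n^k$ for arbitrary $k$, so the orbit of $(\Delta,\phi)$ under inductions is in general infinite with unboundedly large labels, and the graph-size bound says nothing about it. Without a computable bound on the labels of a witnessing $(\Delta',\phi')$, your enumeration never terminates on non-isomorphic inputs, so the procedure is not an algorithm. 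The remark in the paper that guessing inductions is ``usually a marginal issue'' is an informal aside, not a substitute for this bound.

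The paper avoids the enumeration entirely by exploiting the very restrictive shape an induction forces. An induction requires a loop with one label equal to $\pm 1$, i.e.\ an edge $\mathbf{0}\edge\bw$ in the affine representation; so one may assume $(\Gamma,\psi)$ has edges $\mathbf{0}\edge\bw$, $\bb\edge\bb+\bx$ and $(\Delta,\phi)$ has edges $\mathbf{0}\edge\bw'$, $\bb'\edge\bb'+\bx'$. If the two edges lie in the same conjugacy class the problem is already settled in \cite{ACK-iso1}; otherwise the effect of induction is just a translation of the conjugacy class of the second edge, and isomorphism is equivalent to the explicitly decidable conditions $\bw=\bw'$, $\qcsupp{\bb-\bb'}\subseteq\qcsupp{\bw}$, and $\bx-\bx'\in\gen{\bw}$. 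To repair your argument you would either need to prove a computable bound on the labels of a sufficient intermediate graph, or replace the enumeration with a direct characterization of this kind.
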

\begin{proof}
    By \Cref{thm:main}, we only need to deal with the case where we have induction (since sign-changes can be guessed in finitely many ways, see \Cref{thm:sequence-new-moves} in \Cref{sec:moves}). Thus we can assume that $(\Gamma,\psi)$ has edges $\mathbf{0}\edge\bw$ and $\bb\edge\bb+\bx$, and that $(\Delta,\phi)$ has edges $\mathbf{0}\edge\bw'$ and $\bb'\edge\bb'+\bx'$.

    If the two edges lie in the same conjugacy class, then we fall in the cases treated in \cite{ACK-iso1}. Therefore, we can assume that the two edges lie in different quasi-conjugacy classes. In this case, induction will translate the conjugacy class of $\bb\edge\bb+\bx$, and after that only external equivalence is possible. Thus we just have to check whether $\bw=\bw'$ (so that the lower quasi-conjugacy class is dealt with) and $\qcsupp{\bb-\bb'}\subseteq\qcsupp{\bw}$ (so that, up to induction, we can set $\bb=\bb'$) and $\bx-\bx'\in\gen{\bw}$ (so that, after the induction, we can make the two edges equal by means of external equivalence). All of this can be done algorithmically. The statement follows.
\end{proof}

\section{Limit angles}\label{sec:limit-angles}

In this section we give an interpretation of the previous results in terms of angles in the Euclidean plane $\bbR^2$. Each configuration determines an \textit{angle}, and moving away from the root in a binary tree makes the angle narrower. The root of the binary tree, having the largest angle, is able to ``see" more configurations.

A connection move has the effect of rotating the angle: we can choose to rotate to the left or to the right. This allows us to reach new configurations that before we would not see. If we keep performing connections to rotate the angle always in the same direction, we will converge to a direction, which we call \textit{limit direction}. The two limit directions (on the left and on the right) determine an angle, which we call the \textit{limit angle} of the GBS group.

This limit angle is an isomorphism invariant of the GBS group: limit angles for non-isomorphic GBS groups are disjoint. We give a description of the set of all possible limit angles that can appear, showing that this is a discrete set (it can only accumulate at the boundary of the positive cone). See also \Cref{fig:ex7-limit-angles} and \Cref{fig:ex7-limit-angles-2} from the example at the end of the section.

\subsection{Limit directions}


We now give an interpretation of the above results in terms of \textit{angles}. In this section, we give the technical definition of \textit{limit direction} (see \Cref{def:limit-directions} below), which will be crucial in what follows.

\begin{defn}\label{def:full-tree}
    Let $(C)$ be a configuration with minimal points $\ba_1,\ba_2$ and vectors $\bx_1,\bx_2$. We say that $(C)$ is \textbf{full-tree} if $\ba_1+\bx_1,\ba_2+\bx_2\ge\ba_1,\ba_2$.
\end{defn}

This means that $(C)$ is non-degenerate and falls into the case of \Cref{itm:binary-tree} of \Cref{prop:binary-trees}; equivalently, the iterated son $(Cs)$ exists for all $s\in\{1,2\}^*$. Every (iterated) son of a full-tree configuration is again a full-tree configuration.

Let $(R)$ be a non-degenerate non-twin root, with vectors $\bv_0,\bv_1$. Let $\dots,\bv_{-1},\bv_0,\bv_1,\bv_2,\dots$ be the corresponding sequence of vectors, as in \Cref{sec:sequence-of-roots}. We want to characterize all the full-tree configurations that can be obtained from $(R)$ by means of slides and connections. As noted before, every root in the sequence is full-tree (and thus all the iterated sons are full-tree too), except possibly for the first/last root of the sequence (when the sequence is finite on the left/right). If the sequence of roots terminates on the left/right, then the first/last root falls into exactly one of the cases of \Cref{prop:binary-trees}. In each of those cases, it is easy to characterize which of the iterated sons are full-tree.

It is always possible to go from any full-tree configuration to every other full-tree configuration (possibly in a different tree), by means of slides and connections, passing only through full-tree configurations (by \Cref{lem:connection-above}). Thus, in the same way as in \Cref{sec:sequence-of-roots} we defined the sequence of roots, one could define a ``sequence of minimal full-tree configurations" (most of them would be roots, except near the beginning/end of the sequence). However, we are only interested in the first and last elements of the sequence (or in the limits, when the sequence is infinite), as we now explain.

Let $(R)$ be a root with at least one full-tree iterated son. For $s\in\{1,2\}^*$, we say that $(Rs)$ is a \textit{minimal} full-tree iterated son if $(Rs)$ is full-tree, and there is no initial segment $s'\not=s$ such that $(Rs')$ is full-tree. It is easy to check, using \Cref{prop:binary-trees}, that $(R)$ has finitely many minimal full-tree iterated sons $(Rs_1),(Rs_2),\dots,(Rs_k)$ for some integer $k\ge1$ (and these can be algorithmically computed from $(R)$). Among $s_1,\dots,s_k\in\{1,2\}^*$, we take the unique $s_j$ that begins with most digits $2$. We say that $(Rs_j)$ is the \textit{right-most minimal full-tree iterated son} of $(R)$.

\begin{defn}[Limit directions]\label{def:limit-directions}
    Let $(R)$ be a non-degenerate non-twin root, with vectors $\bv_0,\bv_1$. Let $\dots,\bv_{-1},\bv_0,\bv_1,\bv_2,\dots$ be the corresponding sequence of vectors as in {\rm\Cref{sec:sequence-of-roots}}. Define the \textbf{{\rm(}right{\rm)} limit direction} of $(R)$ as the element $\bl^+\in\bA$ obtained as follows:
    \begin{enumerate}
        \item If the sequence is infinite on the right, then we set $\bl^+=\bv_{i+1}-\bv_{i}$ for $i\in\bbN$ big enough.
        \item If the sequence terminates on the right, and some root in the sequence has some full-tree iterated son, then we take the maximum $i\in\bbZ$ such that $(R^{+i})$ has a full-tree iterated son, and we take the right-most minimal full-tree iterated son $(R^{+i}s)$ of $(R^{+i})$. We set $\bl^+$ to be the second vector of $(R^{+i}s)$.

        \item If no root in the sequence has any full-tree iterated son, then we say that $\bl^+$ is \textit{not defined}.

    \end{enumerate}
\end{defn}

Similarly, we can define an order $\preceq_1$ on $\{1,2\}^*$, and the left-most full-tree iterated son of a root $(R)$. In the same way as in \Cref{def:limit-directions}, we can define the \textbf{(left) limit direction} of $(R)$, which will be an element $\bl^-\in\bA$. The two limit directions satisfy several properties:

\begin{itemize}
    \item We have that $\bl^+$ is defined if and only if $\bl^-$ is defined, and if and only if there is at least one full-tree configuration that can be reached from $(R)$ by means of slides and connections.
    \item If $\bl^-,\bl^+$ are defined, then we have that $\bl^-,\bl^+\ge\mathbf{0}$.
    \item If $\bl^-,\bl^+$ are defined, then each of them is part of some basis for $\gen{\bv_0,\bv_1}$. In general, $\bl^-$ and $\bl^+$ together do not generate $\gen{\bv_0,\bv_1}$ - i.e. they are part of two different bases.
\end{itemize}

\begin{rmk}\label{rmk:classification-roots-without-limit-directions}
    It is natural to wonder for which sequences of roots the limit directions do not exist. Such a sequence contains either one or two roots. It can be a single root, falling into the case of \Cref{itm:truncated12} of \Cref{prop:binary-trees} with parameters $h=k=1$. It can consist of a single root, falling into the case of \Cref{itm:1i} or \Cref{itm:2i} of \Cref{prop:binary-trees}. It can consist of two roots, the first falling into the case of \Cref{itm:1i} and the second falling into the case of \Cref{itm:2i} of \Cref{prop:binary-trees}. These are all the possibilities.
\end{rmk}

\subsection{Rank-2 configurations and angles}

\begin{defn}
    A configuration $(C)$ with vectors $\bx_1,\bx_2$ is called \textbf{rank-$2$} if $\gen{\bx_1,\bx_2}\cong\bbZ^2$.
\end{defn}

Let $(R)$ be a non-degenerate root with vectors $\bv_0,\bv_1$, and suppose that $(R)$ is rank-$2$. Then the results about the roots can be reinterpreted in terms of angles in the Euclidean plane $\bbR^2=\gen{\bv_0,\bv_1}\otimes\bbR$. In \Cref{prop:angles}, we show that $(C)$ belongs to the binary tree of $(R)$ if and only if $(C)$ lies inside the \textit{angle} defined by $(R)$ - up to a few exceptional cases (the non-full-tree configurations). However, by means of connection moves, it is possible to escape from this angle; in the subsequent \Cref{thm:limit-angles}, we show that $(C)$ can be obtained from $(R)$ by means of slides and connections if and only if $(C)$ belongs to a wider \textit{limit angle} induced by $(R)$ - again, up to a few exceptional cases (the non-full-tree configurations). In other words, for GBS groups corresponding to rank-$2$ configurations, we have a complete set of invariants classifying them (i.e. minimal points, subgroup generated, limit angle) - once again, up to the few exceptional cases where the limit angles are not defined. In \Cref{prop:limit-angles-disjoint}, we show that distinct limit angles do not intersect, except possibly at their boundary.

\begin{prop}[Angles]\label{prop:angles}
    Let $(C)$ be a rank-$2$ full-tree configuration with vectors $\bx_1,\bx_2$. Then, a configuration $(D)$ with vectors $\by_1,\by_2$ is an iterated son of $(C)$ if and only if the following conditions hold:
    \begin{enumerate}
        \item $\gen{\by_1,\by_2}=\gen{\bx_1,\bx_2}$ and $\by_1,\by_2$ is Nielsen equivalent to $\bx_1,\bx_2$.
        \item\label{itm:angle} We have that $\by_i=\lambda_i\bx_1+\mu_i\bx_2$ for some integers $\lambda_i,\mu_i\ge0$, for $i=1,2$.
    \end{enumerate}
\end{prop}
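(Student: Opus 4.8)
The plan is to translate the son-operation on configurations into the language of Nielsen transformations and nonnegative-combination cones in $\bbR^2 = \gen{\bx_1,\bx_2}\otimes\bbR$, and then to argue both implications.

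For the forward direction, I would proceed by induction on the length of the word $s\in\{1,2\}^*$ with $(D) = (Cs)$. Recall from \Cref{sec:binary-trees} that the first son $(C1)$ has vectors $\bx_1+\bx_2, \bx_2$ and the second son $(C2)$ has vectors $\bx_1, \bx_1+\bx_2$. In matrix form, passing to a son corresponds to multiplying the ordered pair $(\bx_1,\bx_2)$ on the appropriate side by one of the elementary matrices $\left(\begin{smallmatrix}1 & 0\\ 1 & 1\end{smallmatrix}\right)$ or $\left(\begin{smallmatrix}1 & 1\\ 0 & 1\end{smallmatrix}\right)$. Since these generate a submonoid of $SL_2(\bbN)$ (the nonnegative-integer matrices of determinant $1$), any iterated son has vectors $\by_i = \lambda_i\bx_1 + \mu_i\bx_2$ where the matrix $\left(\begin{smallmatrix}\lambda_1 & \mu_1\\ \lambda_2 & \mu_2\end{smallmatrix}\right)$ is a product of these two elementary matrices, hence has nonnegative integer entries and determinant $1$. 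This simultaneously gives condition \ref{itm:angle} (nonnegative coefficients) and the fact that $\by_1,\by_2$ is obtained from $\bx_1,\bx_2$ by a determinant-$1$ transformation over $\bbZ$, which yields condition (1): the generated subgroup is preserved and Nielsen equivalence is preserved (elementary Nielsen moves correspond exactly to these transvections). The rank-$2$ and full-tree hypotheses ensure every son in the sequence genuinely exists, so the induction does not stall.

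For the converse, suppose $\by_1,\by_2$ satisfies (1) and \ref{itm:angle}. From (1), writing the change-of-basis matrix $M = \left(\begin{smallmatrix}\lambda_1 & \mu_1\\ \lambda_2 & \mu_2\end{smallmatrix}\right)$, Nielsen equivalence together with $\gen{\by_1,\by_2}=\gen{\bx_1,\bx_2}\cong\bbZ^2$ forces $\det M = \pm 1$; the nonnegativity from \ref{itm:angle} together with the requirement $\by_1,\by_2$ be a \emph{positively} oriented basis (consistent with the son-ordering) pins down $\det M = +1$. The key arithmetic fact is then that every matrix in $SL_2(\bbN)$ with nonnegative entries factors as a product of the two elementary matrices above — this is the classical statement that the monoid of nonnegative determinant-$1$ integer matrices is freely generated by these two transvections, essentially the continued-fraction / Stern--Brocot decomposition. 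Reading this factorization as a word $s\in\{1,2\}^*$ exhibits $(D)$ as $(Cs)$, and the full-tree hypothesis on $(C)$ guarantees (via \Cref{itm:binary-tree} of \Cref{prop:binary-trees}) that $(Cs)$ actually exists for this $s$.

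The main obstacle I anticipate is the bookkeeping in the converse: one must show that the abstract $SL_2(\bbN)$ factorization corresponds to a \emph{valid} son-sequence and that the orientation/ordering conventions match up, rather than merely that the vectors coincide as unordered pairs. The subtle point is reconciling "Nielsen equivalent as a generating pair" (which a priori only gives $\det M=\pm1$ and allows swapping the two vectors) with the ordered, oriented son-operation; I expect to handle this by carefully using the full-tree hypothesis to fix the orientation and by observing that the monoid factorization is unique, so the word $s$ is well-defined. The rank-$2$ assumption is what makes $\gen{\bx_1,\bx_2}\otimes\bbR$ genuinely two-dimensional, so that "nonnegative cone" and "determinant $+1$" are the right notions; in the degenerate rank-$1$ case the statement would fail.
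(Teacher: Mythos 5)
Your overall strategy is sound and in fact reaches the same engine as the paper's converse --- the Euclidean algorithm --- but packages it more cleanly. The paper runs the Euclidean algorithm only on the coefficients $(\lambda,\mu)$ of $\by_1$, descending through iterated sons until $\by_1$ coincides with one of the current basis vectors, and then finishes with a two-case analysis on $\by_2$: one case produces the word directly (or a contradiction with positivity), the other invokes \Cref{lem:connection-below} to rule out that $(D)$ sits past a connection move. You instead factor the entire change-of-basis matrix at once in the monoid of nonnegative determinant-one integer matrices, which is freely generated by the two transvections; this eliminates the endgame case analysis and the appeal to \Cref{lem:connection-below}, and makes the uniqueness of the word $s$ transparent. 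That is a legitimate and arguably tidier route, and your forward direction coincides with the paper's.

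The one step you should not leave as stated is the claim that nonnegativity together with the full-tree hypothesis ``pins down $\det M=+1$''. It does not: the pair $\by_1=\bx_2$, $\by_2=\bx_1$ satisfies \Cref{itm:angle} with matrix $\left(\begin{smallmatrix}0&1\\1&0\end{smallmatrix}\right)$ of determinant $-1$, defines a perfectly good (and often full-tree) configuration, and is not an iterated son of $(C)$. The sign of the determinant has to be extracted from condition (1), reading Nielsen equivalence of ordered pairs in the orientation-preserving sense; this is also what the paper's own proof silently uses when, in its Case 1, it writes $\by_2=\bu_2+k\bu_1$ rather than allowing $\by_2=-\bu_2+k\bu_1$. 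Replace the appeal to the full-tree hypothesis by an appeal to condition (1) at this point, and your argument goes through.
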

\begin{rmk}
    \Cref{itm:angle} of the above \Cref{prop:angles} means that the vectors $\by_1,\by_2$ are internal to the positive cone determined by $\bx_1,\bx_2$. We also refer to this positive cone as the \textit{angle} determined by $\bx_1,\bx_2$.
\end{rmk}
\begin{proof}
    If $(D)$ is an iterated son of $(C)$, then the two conditions hold (by induction on the number of iterations of taking the son). Suppose now that the two conditions hold.
    
    Since $\gen{\bx_1,\bx_2}\cong\bbZ^2$, the condition $\gen{\by_1,\by_2}=\gen{\bx_1,\bx_2}$ implies that $\by_1=\lambda\bx_1+\mu\bx_2$ for some unique coprime integers $\lambda,\mu$, and by hypothesis we must have $\lambda,\mu\ge0$. We now start with the configuration $(C)$ with vectors $\bx_1,\bx_2$, and with the numbers $\lambda,\mu$; we take iterated sons to perform the Euclidean algorithm, as follows.

    If $\mu\ge\lambda>0$, then we take the first son $(C1)$ with vectors $\bx_1'=\bx_1+\bx_2,\bx_2'=\bx_2$, and we take the integers $\lambda'=\lambda,\mu'=\mu-\lambda$ such that $\by_1=\lambda'\bx_1'+\mu'\bx_2$. If $\lambda>\mu>0$, then we take the second son $(C2)$ with vectors $\bx_1'=\bx_1,\bx_2'=\bx_2+\bx_1)$, and we take the integers $\lambda'=\lambda-\mu,\mu'=\mu$ such that $\by_1=\lambda'\bx_1'+\mu'\bx_2'$. Note that, in both cases, the pair $\bx_1',\bx_2'$ is Nielsen equivalent to $\bx_1,\bx_2$, and $\lambda',\mu'\ge0$ are coprime. We then reiterate the procedure with $(C1)$ or $(C2)$ respectively; and so on.
    
    The procedure will stop with an iterated son $(Cs)$, for some $s\in\{1,2\}^*$, associated with vectors $(\bu_1,\bu_2)$ and coprime non-negative integers $(\eta,\theta)$, such that (i) the pair $\bu_1,\bu_2$ is Nielsen equivalent to $\bx_1,\bx_2$, (ii) $\by_1=\eta\bu_1+\theta\bu_2$, and (iii) either $\eta=0$ or $\theta=0$.

    CASE 1: Suppose that $\theta=0$. Then $\eta=1$ (since they are coprime) and thus $\by_1=\bu_1$. Since $\bu_1,\bu_2$ is Nielsen equivalent to $\by_1,\by_2$ (and they generate a $\bbZ^2$), we must have that $\by_2=\bu_2+k\bu_1$ for some integer $k\in\bbZ$. If $k\ge0$ then $(D)=(Cs2^k)$ and we are done. Otherwise $(D2^{\abs{k}})=(Cs)$ with $k<0$: we take $\abs{k}$ to be the smallest such that there is such an $s$, and in particular $s$ must terminate with $1$. Say $s=t1$, and we get that $(Ct)$ has vectors $\bu_1-\bu_2,\bu_2$ and $(D2^{\abs{k}-1})$ has vectors $\bu_1,\bu_2-\bu_1$. But then $\bu_1-\bu_2$ and $\bu_2-\bu_1$ are both combinations with non-negative coefficients of $\bx_1,\bx_2\ge\mathbf{0}$, contradiction.

    CASE 2: Suppose that $\eta=0$. Then $\theta=1$ (since they are coprime) and thus $\by_1=\bu_2$. Since $\bu_1,\bu_2$ is Nielsen equivalent to $\by_1,\by_2$, we must have that $\by_2=-\bu_1+k\bu_2$ for some integer $k\in\bbZ$. This means that we can jump from some iterated son of $(C)$ to $(D)$ by means of a connection. By \Cref{lem:connection-below} we obtain that either $(D)$ is an iterated son of $(C)$ (in which case we are done) or $(D)$ is obtained from $(C)$ itself by means of a connection. But if we obtain $(C)$ from $(D)$ by means of a connection, then $\by_2$ can not be written as a linear combination of $\bx_1,\bx_2$ with non-negative coefficients, contradiction.
\end{proof}

Let $(R)$ be a non-degenerate non-twin rank-$2$ root with vectors $\bv_0,\bv_1$. Let $\dots,\bv_{-1},\bv_0,\bv_1,\bv_2,\dots$ be the corresponding sequence of vectors as in \Cref{sec:sequence-of-roots}. We want to characterize all rank-$2$ configurations that can be obtained from $(R)$ by means of slides and connections.

\begin{thm}[Limit angles]\label{thm:limit-angles}
    Let $(R)$ be a non-degenerate non-twin rank-$2$ root with vectors $\bv_0,\bv_1$, and with limit directions $\bl^-,\bl^+$. Then, a full-tree configuration $(D)$ with vectors $\by_1,\by_2$ can be obtained from $(R)$ by means of slides and connections if and only if the following conditions hold:
    \begin{enumerate}
        \item $\gen{\by_1,\by_2}=\gen{\bv_0,\bv_1}$ and $\by_1,\by_2$ is Nielsen equivalent to $\bv_0,\bv_1$.
        \item\label{itm:limit-angle} We have that $\nu_i\by_i=\lambda_i\bl^-+\mu_i\bl^+$ for some integers $\lambda_i,\mu_i\ge0$ and $\nu_i>0$, for $i=1,2$.
    \end{enumerate}
\end{thm}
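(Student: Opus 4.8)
The plan is to reduce Theorem~\ref{thm:limit-angles} to \Cref{prop:angles} via the combinatorics of the sequence of roots, exploiting that the limit directions are precisely the ``outermost'' vectors reachable by iterating connections. First I would observe that the forward direction (a reachable full-tree configuration satisfies the two conditions) is the easier half. Condition~(1) is immediate: slides and connections never change the subgroup $\gen{\bv_0,\bv_1}$ (connections preserve it because $\bw_1+\bw_2=k\bw$ keeps us inside the same lattice), and they act by Nielsen moves on the generating pair, so Nielsen-equivalence is preserved. For condition~(2), I would argue that every full-tree configuration reachable from $(R)$ is an iterated son of some root $(R^{+i})$ in the sequence, and that the vectors of every such $(R^{+i})$ (together with all their sons, by \Cref{prop:angles}) lie in the positive cone spanned by $\bl^-,\bl^+$. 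This is where I expect the content to sit: I must show that the vectors $\bv_i$ appearing anywhere in the sequence, and hence the angles they span, are all contained in the limit cone $P=\{\lambda\bl^-+\mu\bl^+\}$. Using \Cref{prop:sequence-of-roots}, the sequence $(\bv_i)$ satisfies the recurrence $\bv_{i+2}=k\bv_{i+1}-\bv_i$ with $k\ge2$; this convexity-type relation forces each $\bv_i$ to be a non-negative combination of the extremal directions, which (by \Cref{def:limit-directions}) are exactly $\bl^-$ on the far left and $\bl^+$ on the far right. The denominators $\nu_i$ arise because $\by_i$ need only be \emph{parallel} to a non-negative combination of $\bl^\pm$, not equal to one, since $\bl^\pm$ are primitive directions and $\by_i$ may be a proper multiple inside the cone.

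For the converse, I would assume conditions~(1) and~(2) and construct an explicit sequence of slides and connections. The strategy is to use condition~(2) to locate, for each $\by_i$, a root $(R^{+j})$ in the sequence whose angle (the cone spanned by $\bv_j,\bv_{j+1}$) contains $\by_i$, and then invoke \Cref{prop:angles} to realize $(D)$ as an iterated son of that root. Concretely, since $\bl^-,\bl^+$ are the two limiting directions and the angles of successive roots $(R^{+j})$ sweep monotonically from the $\bl^-$-side to the $\bl^+$-side (each connection rotating the angle by one ``notch'' via \Cref{lem:jump-to-root}), the union of all these angles exhausts the interior of the limit cone $P$. Thus any pair $\by_1,\by_2$ satisfying~(2) lies in the angle of a single root $(R^{+j})$ for $j$ large enough in the appropriate direction; I would then apply \Cref{prop:angles} to that $(R^{+j})$, whose hypotheses~(1) and~(2) translate directly from our conditions~(1) and~(2). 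Reaching $(R^{+j})$ from $(R)$ is done by the connection moves of \Cref{lem:jump-to-root}, and these stay within full-tree configurations by the remark preceding the theorem.

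The main obstacle will be the boundary behaviour and the denominators $\nu_i$. The cleanest formulation of \Cref{prop:angles} requires the target vectors to be genuine non-negative \emph{integer} combinations of the root's vectors $\bv_j,\bv_{j+1}$, whereas condition~(2) only gives non-negative combinations of the primitive extremal directions $\bl^\pm$ after clearing a denominator $\nu_i$. I will need to check that once $\by_i$ is strictly inside the cone spanned by some $\bv_j,\bv_{j+1}$, it is automatically an integer combination of $\bv_j,\bv_{j+1}$ (using $\gen{\bv_j,\bv_{j+1}}=\gen{\bv_0,\bv_1}\cong\bbZ^2$ and condition~(1)), so that the $\nu_i$ are absorbed. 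The second delicate point is verifying that the sweep of angles is exactly $P$ and not a proper sub-cone: on the side where the sequence terminates, \Cref{def:limit-directions} selects $\bl^+$ (resp.\ $\bl^-$) via the right-most (resp.\ left-most) minimal full-tree son, and I must confirm that \emph{no} full-tree configuration reachable by any further connection escapes this extremal direction. This amounts to checking that the non-full-tree roots at the ends of the sequence contribute no additional reachable full-tree configurations beyond those already captured by the extremal son, which follows from the case analysis of \Cref{prop:binary-trees} together with the characterization in \Cref{rmk:classification-roots-without-limit-directions} of exactly when $\bl^\pm$ fail to exist.
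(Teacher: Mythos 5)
Your overall strategy coincides with the paper's: the forward direction rests on showing that every root in the sequence has its vectors inside the cone spanned by $\bl^-,\bl^+$ (the paper formalizes your ``convexity-type'' step via two elementary observations about nested positive cones and a chain argument along the recurrence $\bv_{i+2}=k\bv_{i+1}-\bv_i$, $k\ge 2$), and the converse is reduced to \Cref{prop:angles}. The forward half of your sketch is essentially correct, modulo the by-hand treatment of the first/last tree when the sequence terminates, which the paper also only gestures at.

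The genuine gap is in the converse, at the sentence ``any pair $\by_1,\by_2$ satisfying (2) lies in the angle of a single root $(R^{+j})$ for $j$ large enough in the appropriate direction.'' This does not follow from ``the union of the angles exhausts the interior of $P$'': that statement only places each $\by_i$ \emph{separately} in some cone spanned by $\bv_j,\bv_{j+1}$, and these cones do not grow with $j$ --- they rotate, each being a proper sub-cone of $P$ sharing only a boundary ray with its neighbour --- so no choice of a single large $j$ makes one angle contain both vectors. A priori $\by_1$ and $\by_2$ could sit in the angles of two different roots, and then \Cref{prop:angles} applies to neither; ruling this out is the crux of the converse and requires an argument you do not supply. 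The paper's resolution is: first place $\by_1$ in the angle of one reachable full-tree configuration $(C)$ with vectors $\bx_1,\bx_2$ (using $\bv_j/j\rar\bl^+$ together with an iterated cone-subdivision observation), write $\by_1=\eta\bx_1+\theta\bx_2$ and $\by_2=\sigma\bx_1+\tau\bx_2$ and note the coefficients are \emph{integers} by condition (1) (this is where your denominators $\nu_i$ get absorbed, as you correctly anticipated), and then, when $\eta,\theta>0$, replace $(D)$ by the slide-equivalent configuration with vectors $\by_1,\by_2+h\by_1$ for $h\gg 0$, which forces $\sigma+h\eta,\tau+h\theta>0$ and pulls both vectors into the angle of the single configuration $(C)$; the cases $\eta=0$ or $\theta=0$ are treated by an explicit slide or connection. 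Some such device (one could instead argue that a Nielsen-equivalent basis pair straddling two distinct sub-angles would have determinant of absolute value at least $2$ with respect to $\bv_j,\bv_{j+1}$, contradicting condition (1)) must be added for your proof to go through; the two delicate points you do flag are real but secondary to this one.
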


\begin{rmk}
    \Cref{itm:limit-angle} of the above \Cref{thm:limit-angles} means that the vectors $\by_1,\by_2$ are internal to the (rational) positive cone induced by $\bl^-,\bl^+$. We also refer to this positive cone as the \textit{limit angle} determined by $\bl^-,\bl^+$.
\end{rmk}

\begin{proof}
    OBSERVATION 1: Let $\bp,\bq,\br,\bs\ge\mathbf{0}$ be non-zero. Suppose that $\bq$ is a combination with strictly positive rational coefficients of $\bp,\br$, and $\br$ is a combination with strictly positive rational coefficients of $\bq,\bs$. Then $\bq,\br$ are combinations with strictly positive rational coefficients of $\bp,\bs$.

    In fact, write $\bq=\lambda\bp+\mu\br$ and $\br=\eta\bq+\theta\bs$ with $\lambda,\mu,\eta,\theta>0$. We deduce that $(1-\mu\eta)\bq=\lambda\bp+\mu\theta\bs$; since $\bp,\bs\ge\mathbf{0}$ are non-zero, we deduce that $1-\mu\eta>0$ and the conclusion follows.

    OBSERVATION 2: Let $\bp,\bq,\br,\bx\ge\mathbf{0}$ be non-zero. Suppose that $\bq,\bx$ are combinations with non-negative rational coefficients of $\bp,\br$. Then $\bx$ is a combination with non-negative rational coefficients of either $\bp,\bq$ or $\bq,\br$.

    In fact, write $\bq=\lambda\bp+\mu\br$ and $\bx=\eta\bp+\theta\br$ and observe that $\lambda\bx=\eta\bq+(\lambda\theta-\mu\eta)\br$ and $\mu\bx=\theta\bq+(\mu\eta-\lambda\theta)\bp$. If $\lambda=0$ or $\mu=0$ we are done; otherwise, we use one or the other identity, depending on the sign of $\lambda\theta-\mu\eta$, and we are done.

    STEP 1: We prove that for all full-tree configurations, which are iterated sons of some root in the sequence, has vectors that can be written as combinations of $\bl^-,\bl^+$ with non-negative rational coefficients.
    
    We note that, in the sequence $\dots,\bv_{i-1},\bv_i,\bv_{i+1},\dots$, every term can be written as a combination of the two adjacent ones with strictly positive rational coefficients.

    We note that if the sequence is infinite on the right, then for $j\in\bbZ$ big enough we have that $\bl^+=\bv_j-\bv_{j-1}$ and thus $\bv_j$ can be written as a linear combination of $\bv_{j-1}$ and $\bl^+$ with strictly positive rational coefficients.

    We note that if $\bv_{j-1},\bv_j,\bv_{j+1}$ exist, and if $\bl^+$ is a vector in some iterated son of $\bv_j,\bv_{j+1}$, then we can write $\bl^+=\lambda\bv_j+\mu\bv_{j+1}$ for some integers $\lambda\ge0$ and $\mu>0$, and $\bv_{j-1}+\bv_{j+1}=k\bv_j$ for some integer $k\ge2$. But then $\bv_j=\frac{\mu}{k\mu+\lambda}\bv_{j-1}+\frac{1}{k\mu+1}\bl^+$ is a combination of $\bv_{j-1},\bl^+$ with strictly positive rational coefficients.

    Putting all of this together, and using Observation 1, we deduce that, for every full-tree root in the sequence, its vectors are combinations of $\bl^-,\bl^+$ with non-negative rational coefficients. If it is the case, the first/last tree in the sequence containing full-tree configurations is dealt with by hand. The conclusion follows.

    STEP 2: Suppose that $(D)$ is a full-tree configuration with vectors $\by_1,\by_2$ that satisfies the two conditions. We observe that $\bl^-,\bl^+$ are linearly independent, since they generate $\by_1,\by_2$. For simplicity, we assume that the sequence of vectors is finite on the left and infinite on the right, the other cases being similar.

    Since $\by_1,\by_2$ are linearly independent, one of them can be written as a combination of $\bl^-,\bl^+$ with non-zero coefficient of $\bl^+$. Suppose that $\by_1=\lambda\bl^-+\mu\bl^+$ with $\lambda\ge0$ and $\mu>0$ rationals, the case with $\by_2$ being analogous.

    We have that $\frac{\bv_j}{j}\rar\bl^+$ for $j\rar+\infty$, and thus for all $j$ big enough we can write $\by_1=\lambda_j\bl^-+\mu_j\frac{\bv_j}{j}$, and we must have that $\lambda_j\rar\lambda$ and $\mu_j\rar\mu$. In particular $\by_1$ must be a combination of $\bl^-,\bv_j$ with non-negative rational coefficients for all $j$ big enough.

    By Step 1 we know that $\dots,\bv_{j-1},\bv_j$ are all combinations with non-negative rational coefficients of $\bl^-,\bv_j$ (except possibly for the one/two initial terms of the sequence). By an iterated application of Observation 2, we deduce that $\by_1$ is a combination with non-negative rational coefficients of some full-tree configuration $(C)$ with vectors $\bx_1,\bx_2$, obtained from $(R)$ with slides and connections.

    We write $\by_1=\eta\bx_1+\theta\bx_2$ with $\eta,\theta\ge0$ rationals and $\by_2=\sigma\bx_1+\tau\bx_2$ with $\sigma,\tau$ rationals. We observe that, since $\by_1,\by_2$ is Nielsen equivalent to $\bx_1,\bx_2$, and since they generate a group isomorphic to $\bbZ^2$, we have that $\eta,\theta,\sigma,\tau$ are uniquely determined, and thus they must be integers. If $\eta=0$ then we must have that $\theta=1$ and $\by_1,\by_2$ is related to $\bx_1,\bx_2$ by slides and possibly a connection, and we are done. Similarly, if $\theta=0$ then we are done. If $\eta,\theta>0$, then up to changing $\by_2$ to $\by_2+h\by_1$ by means of slide moves, for $h$ very big, we can assume that $\sigma,\tau>0$. The conclusion follows by \Cref{prop:angles}.
\end{proof}

\begin{prop}[Different limit angles have disjoint interiors]\label{prop:limit-angles-disjoint}
    Let $(R),(S)$ be non-degenerate non-twin rank-$2$ roots with vectors $\bv_0,\bv_1$ and $\bu_0,\bu_1$ respectively, and with limit directions $\bl^+,\bl^-$ and $\bm^+,\bm^-$ respectively. Suppose that $\gen{\bv_0,\bv_1}=\gen{\bu_0,\bu_1}$ and $\bv_0,\bv_1$ is Nielsen equivalent to $\bu_0,\bu_1$. Then the {\rm(}closed, rational{\rm)} positive cones determined by $\bl^+,\bl^-$ and by $\bm^+,\bm^-$ satisfy exactly one of the following possibilities:
    \begin{enumerate}
        \item The cones coincide. In this case $\bl^+=\bm^+$ and $\bl^-=\bm^-$ and $(R),(S)$ belong to the same sequence of roots.
        \item The cones do not coincide, but they intersect non-trivially. In this case either $\bl^+=\bm^-$ {\rm(}and the intersection is exactly the line $\gen{\bl^+}=\gen{\bm^-}${\rm)} or $\bl^-=\bm^+$ {\rm(}and the intersection is exactly the line $\gen{\bl^-}=\gen{\bm^+}${\rm)}.
        \item The cones are disjoint.
    \end{enumerate}
\end{prop}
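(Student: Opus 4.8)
The plan is to reduce the whole statement to a single dichotomy:

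\begin{center}
\emph{the relative interiors (open cones) of the two limit angles overlap if and only if $(R)$ and $(S)$ lie in the same sequence of roots, in which case the two cones coincide.}
\end{center}

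Granting this, the trichotomy is immediate. Recall from \Cref{sec:sequence-of-roots} and \Cref{lem:jump-to-root} that reachability by slides and connections is an equivalence relation whose classes, restricted to roots, are exactly the sequences of roots; moreover if $(S)=(R^{+n})$ then the vector sequence attached to $(S)$ is that of $(R)$ reindexed, so \Cref{def:limit-directions} returns the same $\bl^\pm=\bm^\pm$ and hence identical cones. This gives case (1) together with its supplementary conclusions, and conversely coinciding cones have equal, nonempty interior (as $\bl^-,\bl^+$ are linearly independent, see Step~2 of the proof of \Cref{thm:limit-angles}), so the dichotomy forces $(R),(S)$ into the same sequence. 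When the cones do not coincide, the dichotomy forces their interiors to be disjoint, and cases (2) and (3) then follow from elementary planar convex geometry.

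The heart of the matter is the forward implication of the dichotomy. Since $\bl^-,\bl^+\ge\mathbf{0}$ are linearly independent, the limit angle of $(R)$ is a convex sector contained in the pointed convex cone $K=\{\ba\ge\mathbf{0}\}$ of the plane $\gen{\bv_0,\bv_1}\otimes\bbR$, hence has angle less than $\pi$; likewise for $(S)$ (note $\gen{\bv_0,\bv_1}=\gen{\bu_0,\bu_1}$, so all four limit directions live in the same plane). Suppose the open cones overlap and fix a primitive rational direction $\bd$ strictly interior to both. Because $\bd$ is interior to the limit angle of $(R)$, combining Step~1 of the proof of \Cref{thm:limit-angles} with \Cref{prop:angles} produces full-tree configurations reachable from $(R)$ whose two vectors both have direction arbitrarily close to $\bd$ (deep iterated sons approximating the diagonal direction $\bd$ via the Euclidean/Stern--Brocot structure). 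Choosing such a configuration $(D)$ close enough, both its vectors also lie strictly inside the open limit angle of $(S)$; then \Cref{thm:limit-angles}, applied once to $(R)$ and once to $(S)$, shows $(D)$ is reachable from each, so $(R)$ and $(S)$ are mutually reachable and lie in the same sequence. I expect the main obstacle to be exactly this simultaneous realization: one must exhibit a single full-tree configuration both of whose vectors sit inside the overlap of the two open cones, which forces one to feed \emph{both} vectors (not just one direction) into the closed-cone criterion of \Cref{thm:limit-angles}, and hence to control how deep iterated sons fill a neighbourhood of an interior rational direction by pairs of vectors that are simultaneously nearly parallel to $\bd$.

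Finally, assume the cones do not coincide, so that by the dichotomy their interiors are disjoint. Two convex sectors of angle less than $\pi$ with disjoint interiors are either disjoint, which is case (3), or meet exactly along a common ray, which must be a bounding ray of each sector. The bounding rays of the first sector are $\gen{\bl^-},\gen{\bl^+}$ and those of the second are $\gen{\bm^-},\gen{\bm^+}$; since the two sectors lie on opposite sides of the shared ray, the coinciding boundaries are on opposite sides, i.e.\ $\gen{\bl^+}=\gen{\bm^-}$ or $\gen{\bl^-}=\gen{\bm^+}$, and in either case the intersection of the closed cones is precisely that ray. As each limit direction is a primitive vector lying in $K$ (being part of a basis of $\gen{\bv_0,\bv_1}$, as recorded after \Cref{def:limit-directions}), and a ray inside the pointed cone $K$ contains a unique primitive lattice vector, the equalities of rays upgrade to the equalities of vectors $\bl^+=\bm^-$ or $\bl^-=\bm^+$ claimed in case (2). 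This exhausts all possibilities and yields the trichotomy.
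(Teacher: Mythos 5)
Your proposal is correct and follows essentially the same route as the paper: the paper's proof likewise reduces everything to the dichotomy that overlapping interiors force the two cones to coincide, established by producing a full-tree configuration with both vectors in the common interior and invoking \Cref{thm:limit-angles} from both sides, with the remaining trichotomy left to planar convex geometry. The only difference is cosmetic: the paper obtains that configuration from the tail of $(S)$'s root sequence converging to $\bm^-$ (which lies interior to the $(R)$-cone), whereas you approximate an arbitrary common interior direction by deep iterated sons from $(R)$'s side; your version is, if anything, slightly more explicit about forcing \emph{both} vectors of a single configuration into the overlap.
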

\begin{proof}
    Suppose that the interiors of the cones intersect. Then we must have that one of $\bm^+,\bm^-$, say $\bm^-$, is a combination of $\bl^+,\bl^-$ with strictly positive rational coefficients.

    Suppose that, by performing slides and connections on $(R')$, we can obtain a full-tree configuration $(C')$ with vectors $\bm^-$ and some other vector. Then by \Cref{thm:limit-angles} we obtain that $(C')$ can be obtained also from $(R)$ by performing slides and connections. Thus in this case $(R)$ and $(S)$ can be obtained from each other by means of connections, and the two cones coincide.

    Suppose that the sequence of vectors $\dots,\bu_{-1},\bu_0,\bu_1,\dots$ is infinite on the left. Then we have that $\frac{\bu_j}{\abs{j}}\rar\bm_-$ for $j\rar-\infty$ and thus for all but finitely many integers $j<0$ we have that $\frac{\bu_j}{\abs{j}}$ is a combination of $\bl^+,\bl^-$ with strictly positive rational coefficients. But then some full-tree root in the sequence of $(S)$ has both vectors $\bu_{j-1},\bu_j$ which are combinations of $\bl^+,\bl^-$ with strictly positive rational coefficients. By \Cref{thm:limit-angles}, this root must also belong to the sequence of roots associated with $(R)$. Therefore $(R)$ and $(S)$ can be obtained from each other by means of connections, and the two cones coincide.
\end{proof}

\subsection{The space of limit angles}

Fix two minimal points $\ba_1,\ba_2\in\pA$. Consider two vectors $\bh_1,\bh_2\in\bA$ generating a subgroup $\gen{\bh_1,\bh_2}\cong\bbZ^2$. We are interested in characterizing all the isomorphism classes of configurations $(C)$ with vectors $\bx_1,\bx_2$ such that $\gen{\bx_1,\bx_2}=\gen{\bh_1,\bh_2}$ and $\bx_1,\bx_2$ is Nielsen equivalent to $\bh_1,\bh_2$.

In order to do this, the main step is characterizing which directions can be realized as limit directions; this is done with the following \Cref{prop:realizing-limit-directions}. As we had already observed, a limit direction $\bl^+$ always satisfies $\bl^+\ge\mathbf{0}$ and is always part of some basis for $\gen{\bh_1,\bh_2}$.

\begin{prop}[Realizing limit directions]\label{prop:realizing-limit-directions}
    Let $\bh_1,\bh_2\in\bA$ be such that $\gen{\bh_1,\bh_2}\cong\bbZ^2$. Let $\bl\in\gen{\bh_1,\bh_2}$ be an element which is part of some basis {\rm(}but not together with $\ba_1-\ba_2${\rm)}, and such that all components of $\bl$ are strictly positive. Then exactly one of the following cases takes place.
    \begin{enumerate}
        \item $\ba_2+\bl\not\ge\ba_1$. In this case, there is a full-tree configuration $(C)$, with vectors Nielsen equivalent to $\bh_1,\bh_2$, with limit angle $\bl^+=\bl$. The sequence of roots is uniquely determined, and is infinite on the right.
        \item\label{itm:seq-fin} $\ba_2+\bl\ge\ba_1$ and $\ba_1+\bl\not\ge\ba_2$. In this case there is a full-tree configuration $(C)$, with vectors Nielsen equivalent to $\bh_1,\bh_2$, and with limit angle $\bl^+=\bl$. The sequence of roots is uniquely determined, and is finite on the right.
        \item $\ba_2+\bl\ge\ba_1$ and $\ba_1+\bl\ge\ba_2$. In this case, there is no full-tree configuration $(C)$, with vectors Nielsen equivalent to $\bh_1,\bh_2$, with limit angle $\bl^+=\bl$.
    \end{enumerate}
\end{prop}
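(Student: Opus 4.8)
The plan is to reduce the whole statement to comparisons between $\bl$ and the fixed displacement $\bd:=\ba_1-\ba_2$. The three conditions are exactly $\bl\not\ge\bd$ (Case 1), $\bl\ge\bd$ together with $\bl\not\ge-\bd$ (Case 2), and $\bl\ge\bd$ together with $\bl\ge-\bd$ (Case 3), since $\ba_2+\bl\ge\ba_1\iff\bl\ge\bd$ and $\ba_1+\bl\ge\ba_2\iff\bl\ge-\bd$; these alternatives are visibly mutually exclusive and exhaustive. The organising observation I would isolate first is that a full-tree configuration can have $\bl$ as its \emph{second} vector precisely when $\bl\ge\bd$, and as its \emph{first} vector precisely when $\bl\ge-\bd$ (by \Cref{def:full-tree}, using that $\bl$ has strictly positive components, so it has full support and the support/control conditions are automatic). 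I would also record the two facts $\bd\not\ge\bl$ and $-\bd\not\ge\bl$, which always hold: if, say, $\bd\ge\bl$ then, as $\bl$ has strictly positive components, $\bd\ge\mathbf{0}$, forcing $\ba_1\ge\ba_2$, contrary to the configuration hypothesis $\ba_1\not\ge\ba_2$. The real content is then the construction in Cases 1 and 2 and the non-existence in Case 3.

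For Case 1 I would realise $\bl$ as the common difference of an arithmetic progression. Choosing a partner $\bw$ with $\{\bl,\bw\}$ a basis of $\gen{\bh_1,\bh_2}$ and setting $\bv_i:=\bw+i\bl$, the pair $(\bv_i,\bv_{i+1})$ is a configuration with minimal points $\ba_1,\ba_2$ and consecutive difference $\bl$. By \Cref{def:root-configuration} it is a root exactly when $\bl\not\ge\bd$ and $\bd\not\ge\bl$: the first is the Case 1 hypothesis and the second is automatic, so each $(\bv_i,\bv_{i+1})$ is a root. Since $\bl$ has strictly positive components the $\bv_i$ become positive and full-support for $i$ large, so these configurations are full-tree and the forward recursion of \Cref{prop:sequence-of-roots} takes the value $k=2$, i.e. $\bv_{i+2}=2\bv_{i+1}-\bv_i$; the sequence of roots is therefore infinite on the right with $\bl^+=\bv_{i+1}-\bv_i=\bl$. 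Running the backward recursion of \Cref{prop:sequence-of-roots} from this tail is deterministic and yields the remainder of the sequence; uniqueness then follows once the phase of $\bw$ modulo $\bl$ is fixed by the requirement that consecutive vectors form a basis and by the incomparability that defines the terminal root on the left.

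For Case 2 the element $\bl$ is an admissible second vector ($\bl\ge\bd$) but not an admissible first vector ($\bl\not\ge-\bd$), and the guiding idea is that $\bl$ is then \emph{extremal on the right}. I would build a full-tree configuration with second vector $\bl$ and large first vector, pass to its root by taking first-fathers — which keep the second vector equal to $\bl$ and decrease the first vector by $\bl$ at each step — and check that the process halts at a genuine terminal root whose rightmost minimal full-tree son, in the sense of \Cref{def:limit-directions}, again has second vector $\bl$. The condition $\bl\not\ge-\bd$ is exactly what forbids $\bl$ from being reused as a first vector, which is what makes the terminal root genuinely the last one (no further connection to the right) and pins $\bl$ as the attained right limit direction; the sequence is therefore finite on the right. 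Uniqueness again comes from the determinism of the backward recursion, now anchored by the concrete vector $\bl$ rather than merely its direction.

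Case 3 is the non-existence statement, which I would argue against the two clauses of \Cref{def:limit-directions}. If $\bl=\bl^+$ were realised by a sequence infinite on the right, its far-right configurations would be roots of consecutive difference $\bl$, forcing $\bl\not\ge\bd$ by the root criterion above, contradicting $\bl\ge\bd$. Hence $\bl^+$ could only be attained as the second vector of the rightmost minimal full-tree son of a terminal root; but $\bl\ge-\bd$ makes $\bl$ an admissible first vector, so any full-tree configuration having $\bl$ as a second vector can be continued by a further full-tree configuration to its right (again using that $\bl$ has full support, so the relevant control conditions are free), meaning $\bl$ is never extremal and cannot be the attained right limit. I expect this last step --- making ``extremal on the right'' precise and matching it exactly to the combinatorial bookkeeping of \emph{rightmost minimal full-tree son}, across the truncated-tree cases of \Cref{prop:binary-trees} at the terminal root --- to be the main obstacle. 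Finally, the hypothesis that $\bl$ is not part of a basis together with $\ba_1-\ba_2$ should enter only to exclude the twin-root degeneracy, keeping us inside the non-twin regime where the clean recursions of \Cref{prop:sequence-of-roots} apply.
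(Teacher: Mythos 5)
Your proposal is correct and follows essentially the same route as the paper: both realize Case 1 by the arithmetic progression $\bw+i\bl$ (the paper's $\bp+A\bl$, $\bp+(A+1)\bl$ for $A$ large), both handle Cases 2 and 3 by observing that a rightward connection from a full-tree configuration with second vector $\bl$ produces a configuration with first vector $\bl$, which is full-tree exactly when $\ba_1+\bl\ge\ba_2$, and both obtain the trichotomy by exclusion. The bookkeeping you flag as the main obstacle (matching ``extremal on the right'' to the right-most minimal full-tree iterated son at the terminal root) is treated at the same level of informality in the paper's own proof, so this is not a gap relative to the paper's argument.
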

\begin{rmk}
    Analogous conclusions holds with $\bl^-$ instead of $\bl^+$. In particular, if all components of $\bl$ are strictly positive, then $\bl$ can be realized as a right limit direction if and only if $\bl$ can be realized as a left limit direction, and if and only if $\bl\not\ge\abs{\ba_1-\ba_2}$ (the absolute value taken componentwise).
\end{rmk}
\begin{rmk}
    If $\bl$ is part of a basis together with $\ba_1-\ba_2$, then $\bl$ appears in some full-tree configuration which has two twin roots.
\end{rmk}
\begin{rmk}
    In the above \Cref{prop:realizing-limit-directions}, instead of requiring that all components of $\bl$ are strictly positive, it suffices to require it only for the components in $\qcsupp{\ba_1-\ba_2}$. The requirement does not in any case include the component $\bbZ/2\bbZ$.
\end{rmk}
\begin{proof}
    We denote with $\sim$ the relation of Nielsen equivalence.     Take $\bp\in\bA$ such that $\bp,\bl\sim \bh_1,\bh_2$. Note that $\bp$ is uniquely determined up to adding multiples of $\bl$.

    Suppose that $\bl$ is the limit angle of a sequence of roots $\dots,\bv_i,\bv_{i+1},\dots$ infinite on the right, with $\bv_i,\bv_{i+1}$ Nielsen equivalent to $\bh_1,\bh_2$. Then, for $i\in\bbZ$ big enough, we have that $\bv_i,\bl=\bv_i,\bv_{i+1}-\bv_i\sim\bv_i,\bv_{i+1}\sim\bh_1,\bh_2$ and thus $\bv_i=\bp+A\bl$ and $\bv_{i+1}=\bp+(A+1)\bl$ for some integer $A\in\bbZ$. But since $\bv_i,\bv_{i+1}$ is a root, we get that $\ba_2+\bp+(A+1)\bl\not\ge\ba_1+\bp+A\bl$ and thus $\ba_2+\bl\not\ge\ba_1$ as desired.
    
    Note that the sequence of roots is uniquely determined in this case (up to shift).
    
    Conversely, suppose that $\ba_2+\bl\not\ge\ba_1$. Since all components of $\bl$ are strictly positive, it is easy to check that for $A\in\bbN$ big enough the elements $\bv_0=\bp+A\bl$ and $\bv_1=\bp+(A+1)\bl$ define a full-tree root, whose sequence is infinite on the right and has limit angle $\bl^+=\bl$.

    Suppose that $\bl$ is the limit angle of a sequence of roots finite on the right. Then there must be a full-tree configuration $(C)$ with vectors $\bx,\bl$, which is an iterated son of some root of the sequence, and with the following additional property: if we perform a connection (with the second edge controlling the endpoint of the first) we do not get a full-tree configuration. Since $(C)$ is full-tree, we deduce that $\ba_2+\bl\ge\ba_1$ and that we can perform a connection (with the first edge controlling the endpoint of the first). When we perform the connection we get $\bl,-\bx+B\bl$, and, for $B\in\bbN$ big enough, we have that $-\bx+B\bl\ge\mathbf{0}$ and $\ba_2-\bx+B\bl\ge\ba_1$ (since all the components of $\bl$ are strictly positive); if this newly obtained configuration is not full-tree, it must be because $\ba_1+\bl\not\ge\ba_2$, as desired.
    
    Note that the sequence of roots is uniquely determined in this case, since $\bx,\bl\sim\bh_1,\bh_2$ and being full-tree uniquely determines $\bx$ up to slide moves.

    Conversely, suppose that $\ba_2+\bl\ge\ba_1$ and $\ba_1+\bl\not\ge\ba_2$. Since all components of $\bl$ are strictly positive, we can choose $\bp\in\bA$ in such a way that $\bp,\bl$ is a full-tree configuration (and $\bp,\bl\sim\bh_1,\bh_2$). Note that, by performing a connection (with the second edge controlling the endpoint of the first) we obtain the configurations $\bl,-\bp+B\bl$, which will never be full-tree since $\ba_1+\bl\not\ge\ba_2$. Thus the root of this configuration will give the desired sequence.
\end{proof}

In order to get a full-tree configuration, a necessary condition is that the subgroup $\gen{\bh_1,\bh_2}$ contains at least one element with all components $>0$ (or at least, the ones in $\qcsupp{\ba_1-\ba_2}$). In that case, the Euclidean plane $\gen{\bh_1,\bh_2}\otimes\bbR$ will contain a non-empty open cone $P$, generated by linear combinations with positive coefficients of elements with positive components. Note that the complement $P^c$ has non-empty interior. The two lines in the boundary $\partial P$ correspond to directions of $\gen{\bh_1,\bh_2}\otimes\bbR$ where some component is equal to $0$; note that such directions are not necessarily realized in $\gen{\bh_1,\bh_2}$.

\begin{thm}\label{thm:limit-directions-discrete}
    Let $\bh_1,\bh_2\in\bA$ be such that $\gen{\bh_1,\bh_2}\cong\bbZ^2$. Suppose that $\gen{\bh_1,\bh_2}$ contains at least one vector whose components are all strictly positive. Then we have the following:
    \begin{enumerate}
        \item\label{tyuis} The set of directions that can be realized as limit directions is a finite union of arithmetic progressions in $\gen{\bh_1,\bh_2}$.
        \item\label{aho} This finite set of arithmetic progressions can be algorithmically computed from $\ba_1,\ba_2,\bh_1,\bh_2$.
        \item\label{tjs} If $P\subseteq\gen{\bh_1,\bh_2}$ is the open cone defined above, then the set of limit directions has no accumulation point in the interior of $P$ {\rm(}i.e. it is finite inside any closed sub-cone of $P${\rm)}.
    \end{enumerate}
\end{thm}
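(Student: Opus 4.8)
The plan is to read the set of limit directions directly off \Cref{prop:realizing-limit-directions} and its remarks, and then analyse its geometry inside the plane $\gen{\bh_1,\bh_2}\otimes\bbR$. By the remark following \Cref{prop:realizing-limit-directions}, a primitive vector $\bl\in\gen{\bh_1,\bh_2}$ with strictly positive components is realized as a limit direction if and only if $\bl\not\ge\abs{\ba_1-\ba_2}$ and $\bl$ is not part of a basis together with $\ba_1-\ba_2$. Writing $\bc=\ba_1-\ba_2$ and $\gamma_r=\abs{c_r}$, and letting $P$ be the open cone of vectors with strictly positive components, the set of limit directions is therefore
\[
    L=\bigl\{\bl\in\gen{\bh_1,\bh_2}\cap P : \bl\text{ primitive},\ \bl\not\ge\abs{\bc},\ \{\bl,\bc\}\text{ not a basis}\bigr\}.
\]
First I would record the elementary geometry. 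Since $\gen{\bh_1,\bh_2}\cong\bbZ^2$ contains a strictly positive vector, $P$ is a genuine two-dimensional sector bounded by two rays $\bbR_{\ge0}\bd^+$ and $\bbR_{\ge0}\bd^-$, each contained in some coordinate hyperplane $\{l_r=0\}$; let $r^+,r^-\in\cP(\Gamma,\psi)$ be primes with $(\bd^\pm)_{r^\pm}=0$. By compactness of $P'\cap\{\lVert\bl\rVert=1\}$, on any closed subcone $P'\subseteq P\cup\{\mathbf{0}\}$ one has $l_r\ge\varepsilon\lVert\bl\rVert$ for all $r$ and some $\varepsilon=\varepsilon(P')>0$.

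The core of \Cref{tyuis} is the observation that the set of $\bl\in P\cap\gen{\bh_1,\bh_2}$ with $\bl\not\ge\abs{\bc}$ is contained in
\[
    \bigcup_{r\in\supp{\bc}}\bigl\{\bl\in\gen{\bh_1,\bh_2}\cap P : 0<l_r<\gamma_r\bigr\},
\]
since $\bl\not\ge\abs{\bc}$ forces $l_r<\gamma_r$ for some $r$, necessarily with $\gamma_r>0$. For $r\in\supp{\bc}$ whose component $l_r$ vanishes on neither boundary ray, $l_r\ge\varepsilon\lVert\bl\rVert$ on $\bar P$, so $\{0<l_r<\gamma_r\}\cap P$ is bounded and contributes only finitely many lattice points. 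The unbounded contributions come only from $r\in\{r^+,r^-\}\cap\supp{\bc}$. Fixing such an $r$, on each fibre $\{l_r=j\}$ (with $j$ ranging over the finitely many values in $\pi_r(\gen{\bh_1,\bh_2})\cap(0,\gamma_r)$) the lattice points form a coset of $\ker(\pi_r|_{\gen{\bh_1,\bh_2}})=\bbZ\bd$. Choosing a basis $\{\be,\bd\}$ with $\pi_r(\be)=m$ the minimal positive value, a point with $l_r=j=km$ is $\bl=k\be+n\bd$, and such $\bl$ is primitive exactly when $\gcd(k,n)=1$; for fixed $k$ the admissible $n$ are the residues coprime to $k$, a union of at most $\varphi(k)$ arithmetic progressions modulo $k$, and intersecting with $P$ restricts each to a sub-ray. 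Summing over the finitely many $j$ and the at most two boundary primes shows that $L$ is a finite union of arithmetic progressions, giving \Cref{tyuis}.

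For \Cref{tjs}, any $\bl\in L\cap P'$ satisfies $l_r<\gamma_r$ for some $r$, whence $\varepsilon\lVert\bl\rVert\le l_r<\max_r\gamma_r$; thus $\lVert\bl\rVert$ is bounded, $L\cap P'$ is finite, and $L$ has no accumulation point in the interior of $P$. \Cref{aho} follows by inspection, as the cone $P$, its boundary rays and primes $r^\pm$, the values $\gamma_r$, the bases $\{\be,\bd\}$, and the coprimality progressions are all computable from $\ba_1,\ba_2,\bh_1,\bh_2$. Finally, deleting the directions forming a basis with $\bc$ removes the two cosets $\pm\bl_1+\bbZ\bc$ (empty unless $\bc$ is primitive), and since finite unions of arithmetic progressions in $\bbZ^2$ are closed under removing such a set (two non-parallel progressions meet in at most one point, and the parallel case reduces to the one-dimensional statement), the structure in \Cref{tyuis} is preserved. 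I expect the main obstacle to be the bookkeeping in the strip analysis: reducing primitivity to $\gcd(k,n)=1$ and hence to residues modulo $k$, and correctly separating unbounded strips from bounded pieces in the degenerate situations where a boundary ray lies in several coordinate hyperplanes, or where $\supp{\bc}$ misses $r^+,r^-$ entirely (in which case $L$ is simply finite). Everything else reduces to planar linear algebra and the two compactness bounds above.
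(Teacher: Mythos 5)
Your proposal is correct, and for the main structural statement it follows essentially the same route as the paper: both arguments start from the characterization in \Cref{prop:realizing-limit-directions} (a strictly positive primitive $\bl$ is realizable iff $\bl\not\ge\abs{\ba_1-\ba_2}$), both reduce to the condition $0<l_{p_0}<\gamma_{p_0}$ on a failing component, both observe that this component takes only finitely many values, and both show that on each such fibre the primitive lattice points form a finite union of arithmetic progressions because primitivity is a periodic (congruence) condition along the fibre. The paper does this in the coordinates $\bl=x\bh_1+y\bh_2$ by solving $xa+yb=d$ and noting $\gcd(x,y)\mid d$, whereas you use a basis $\{\be,\bd\}$ adapted to $\ker\pi_r$ and reduce to $\gcd(k,n)=1$; these are the same computation in different bases. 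Your treatment is somewhat more careful on two points the paper glosses over: you explicitly separate the bounded strips (interior primes) from the unbounded ones (boundary primes of the cone $P$), and you check that deleting the at most two cosets of directions forming a basis with $\ba_1-\ba_2$ preserves the finite-union-of-progressions structure. The genuine divergence is in \Cref{tjs}: the paper argues by contradiction, extracting a convergent subsequence inside a single arithmetic progression and re-invoking \Cref{prop:realizing-limit-directions} to realize the limiting direction $\br$; your argument is direct and quantitative, using the compactness bound $l_r\ge\varepsilon\lVert\bl\rVert$ on a closed subcone together with $l_r<\gamma_r$ to bound $\lVert\bl\rVert$ outright. Your version of this step is cleaner and avoids the slightly delicate final implication in the paper's contradiction argument; both are valid.
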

\begin{proof}
    Let $\bl$ be a limit direction. There are at most two possibilities for $\bl$ with some component equal to zero; thus we assume that all components of $\bl$ are strictly positive. According to \Cref{prop:realizing-limit-directions} at least one of $\bl\not\ge\ba_1-\ba_2$ or $\bl\not\ge\ba_2-\ba_1$ holds, so assume that $\bl\not\ge\ba_1-\ba_2$. The inequality can fail only on finitely many components, so assume that it fails on the $p_0$-th component for some $p_0\in\cP(\Gamma,\psi)$, and let $c$ be the $p_0$-th component of $\ba_1-\ba_2$. We can write $\bl=x\bh_1+y\bh_2$ for some $x,y\in\bbZ$ coprime. Call $a,b$ the $p_0$-th components of $\bh_1,\bh_2$ respectively. Then we must have $0<xa+yb<c$ and thus $xa+yb$ can only assume finitely many values. Assume that $xa+yb=d$ for some fixed $d\in\bbN$.
    
    If the equation $xa+yb=d$ has a solution $x_0a+y_0b=d$, then all the other solutions are given by $(x_0+\lambda\frac{b}{(a,b)})a+(y_0-\lambda\frac{a}{(a,b)})b=d$ for $\lambda\in\bbZ$. We observe that $(x_0+\lambda\frac{b}{(a,b)},y_0-\lambda\frac{a}{(a,b)})$ divides $(ax_0+\lambda\frac{ab}{(a,b)},by_0-\lambda\frac{ba}{(a,b)})=(ax_0+by_0,by_0-\lambda\frac{ba}{(a,b)})$ which divides $ax_0+by_0=d$. In particular,
    \[
        (x_0+\lambda\frac{b}{(a,b)},y_0-\lambda\frac{a}{(a,b)})=
        (x_0+\lambda'\frac{b}{(a,b)},y_0-\lambda'\frac{a}{(a,b)})
    \]
    whenever $\lambda'-\lambda$ is multiple of $d$. Thus we can define the set $\cS=\{(x,y)\in\bbZ^2 : ax+by=d$ and $x,y$ coprime$\}$, and we have that $\cS$ is a finite union of arithmetic progressions. We now intersect the set $\cS$ with the conditions that $x\bh_1+y\bh_2$ has all components $>0$ and we get a finite union of arithmetic progressions (possibly truncated on one or both sides). This describes the set of all possible values for $\bl=x\bh_1+y\bh_2$ as a finite union of arithmetic progressions, yielding \Cref{tyuis}.

    All the above steps can be performed algorithmically, and thus we get \Cref{aho}.

    For \Cref{tjs}, if we have a sequence of limit directions $\bl_n$ converging to some direction, then up to taking a subsequence we can assume that they all belong to a common arithmetic progression, and thus $\bl_n=\bt+k_n\br$ for some $\bt,\br\in\gen{\bh_1,\bh_2}$ and for $k_n\rar+\infty$ integers, and thus the limit of the sequence of directions must be direction of $\br\in\gen{\bh_1,\bh_2}$. Without loss of generality, we can also assume that $\br$ is not a proper power (as we only care about the direction of $\br$).
    
    But if all components of $\br$ are strictly positive, then $\br$ can be realized as limit direction (by \Cref{prop:realizing-limit-directions}), and thus all $\bl_n$ for $n$ big enough must belong to some full-tree configuration with limit direction $\br$, contradiction.
\end{proof}

\subsection{Non-rank-2 configurations}

We finally deal with the case of configurations which are not rank-$2$. Fix two minimal points $\ba_1,\ba_2\in\pA$.

\begin{prop}\label{prop:rank-1-realizing-limit-directions}
    Let $\bh_1,\bh_2\in\bA$ be such that $\gen{\bh_1,\bh_2}\cong\bbZ$ or $\bbZ\oplus\bbZ/2\bbZ$. Let $\bl\in\gen{\bh_1,\bh_2}$ be an element which is part of a generating pair, and such that all components of $\bl$ are strictly positive. Then exactly one of the following cases takes place:
    \begin{enumerate}
    \item $\ba_2+\bl\not\ge\ba_1$. In this case, there is a full-tree configuration $(C)$, with vectors Nielsen equivalent to $\bh_1,\bh_2$, with limit angle $\bl^+=\bl$. Moreover, there are finitely many sequences of roots for such a configuration $(C)$, and each of them is infinite on the right.
    \item $\ba_2+\bl\ge\ba_1$ and $\ba_1+\bl\not\ge\ba_2$.  In this case, there is a full-tree configuration $(C)$, with vectors Nielsen equivalent to $\bh_1,\bh_2$, with limit angle $\bl^+=\bl$. Moreover, there are finitely many sequences of roots for such a configuration $(C)$, and each of them is finite on the right.
    \item $\ba_2+\bl\ge\ba_1$ and $\ba_1+\bl\ge\ba_2$. In this case, there is no full-tree configuration $(C)$ with limit direction $\bl^+=\bl$.
    \end{enumerate}
\end{prop}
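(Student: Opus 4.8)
The plan is to follow the proof of \Cref{prop:realizing-limit-directions} almost verbatim for the trichotomy and for the existence statements, and to isolate the one genuinely new point — the passage from ``uniquely determined'' to ``finitely many'' sequences of roots — as a separate finite-index computation.

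First I would establish the three cases and the existence/non-existence of a full-tree configuration with $\bl^+=\bl$ exactly as in the rank-$2$ situation. The alternatives are governed by the same two inequalities $\ba_2+\bl\ge\ba_1$ and $\ba_1+\bl\ge\ba_2$, and strict positivity of the components of $\bl$ is again what makes the asymptotics work: for $A\in\bbN$ large the pair $\bp+A\bl,\bp+(A+1)\bl$, where $\bp,\bl$ is Nielsen equivalent to $\bh_1,\bh_2$, satisfies the full-tree inequalities of \Cref{def:full-tree}, giving a full-tree root whose sequence is infinite on the right; while a connection with large parameter eventually produces a configuration with vectors $\bl,-\bx+B\bl$ that fails to be full-tree precisely when $\ba_1+\bl\not\ge\ba_2$, yielding the finite-on-the-right case. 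None of these arguments uses the rank of $\gen{\bh_1,\bh_2}$ — only that $\bl$ is part of a generating pair and has strictly positive components — so they transfer directly and produce the ``infinite on the right'' / ``finite on the right'' / ``no such configuration'' dichotomy of the three cases.

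The new ingredient is the counting. As in \Cref{prop:realizing-limit-directions}, a root of a sequence with right limit direction $\bl$ has vectors $\bv_i,\bv_{i+1}$ with $\bv_i=\bp+A\bl$, where $\bp$ is a complement of $\bl$ in a generating pair Nielsen equivalent to $\bh_1,\bh_2$, and $\bp$ is determined only modulo $\gen{\bl}$. In the rank-$2$ case this complement is unique modulo $\gen{\bl}$, forcing a single sequence. Here $\gen{\bh_1,\bh_2}$ has torsion-free rank $1$, and since $\bl$ has strictly positive components it has non-trivial image in the free part; hence $\gen{\bl}$ has finite index in $\gen{\bh_1,\bh_2}$ and the quotient $\gen{\bh_1,\bh_2}/\gen{\bl}$ is finite. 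Consequently there are only finitely many residue classes available for $\bp$, each giving at most one sequence of roots, with the full-tree and root conditions pinning down the shift $A$ and the sign just as before. This gives the ``finitely many sequences of roots'' assertion in Cases~1 and~2.

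The delicate point, and the one I expect to cost the most care, is the torsion bookkeeping when $\gen{\bh_1,\bh_2}\cong\bbZ\oplus\bbZ/2\bbZ$. Here the two vectors of any configuration are parallel in the free part, so the geometric ``angle'' picture underlying \Cref{prop:angles} and \Cref{thm:limit-angles} collapses and must be replaced by direct arithmetic in $\gen{\bh_1,\bh_2}$. Moreover this is exactly the regime in which $\bl$ can be part of a generating pair together with $\ba_1-\ba_2$, triggering the twin-root phenomenon of \Cref{def:twin-roots}: a full-tree configuration then has two roots rather than one, and since a twin root admits no connection, the associated family must be described through \Cref{lem:twin-binary-trees} rather than \Cref{prop:sequence-of-roots}. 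I would treat these twin configurations by hand, checking that they still contribute only finitely many sequences — bounded by the finite index computed above together with the at most two twins attached to each root — and that the resulting limit direction equals $\bl$ in each surviving case. The subcase $\gen{\bh_1,\bh_2}\cong\bbZ$ should be separated out as it is cleaner: there $\gen{\bl}=\gen{\bh_1,\bh_2}$, the complement class is unique, and no twin roots intervene.
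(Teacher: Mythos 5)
Your proposal is correct and follows essentially the same route as the paper's proof: transfer the trichotomy and the existence arguments from \Cref{prop:realizing-limit-directions} (which indeed only use positivity of $\bl$ and the root/full-tree inequalities), and obtain finiteness of the set of sequences of roots from the finiteness of $\gen{\bh_1,\bh_2}/\gen{\bl}$ --- the paper phrases this concretely as the finitely many residues $\lambda$ modulo $\ell$ coprime with $\ell$ after writing $\bl=\ell\bz$, and in the $\bbZ\oplus\bbZ/2\bbZ$ case records the explicit criterion for $a\bz+b\bepsilon,c\bz+d\bepsilon$ to be a generating pair ($a,c$ coprime and $ad-bc$ odd). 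One correction to your last paragraph: under the standing hypothesis that all components of $\bl$ are strictly positive, every nonzero element of $\gen{\bh_1,\bh_2}$ has all its $\bbZ$-components of one sign, whereas $\ba_1-\ba_2$ has components of both signs (as $\ba_1\not\ge\ba_2$ and $\ba_2\not\ge\ba_1$); hence $\ba_1-\ba_2+\be$ never lies in $\gen{\bh_1,\bh_2}$, twin roots cannot occur here, and the case analysis you flag as delicate is vacuous --- which is why the paper's proof does not mention it.
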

\begin{rmk}
    In the above \Cref{prop:rank-1-realizing-limit-directions}, instead of requiring that all components of $\bl$ are strictly positive, it suffices to require it only for the components in $\qcsupp{\ba_1-\ba_2}$. The requirement does not in any case include the component $\bbZ/2\bbZ$.
\end{rmk}
\begin{proof}
    Suppose first that $\gen{\bh_1,\bh_2}\cong\bbZ$ generated by $\bz$. Write $\bl=\ell\bz$ for some integer $\ell$, which we can assume to be $\ge1$ (meaning that $\bz$ has all components strictly positive).

    If $\dots,\bv_i,\bv_{i+1},\dots$ is a sequence of roots infinite on the right, with limit direction $\bl^+=\bl$, then for $i\in\bbN$ big enough we must have that $\bv_i=\lambda\bz$ and $\bv_{i+1}=\lambda\bz+\bl$ for some integer $\lambda\ge1$. Since $\bv_i,\bv_{i+1}$ generate $\gen{\bz}$ we deduce that $(\lambda,\ell)=1$. Since $\bv_i,\bv_{i+1}$ form a root, we must have that $\ba_2+\lambda\bz+\bl\not\ge\ba_1+\lambda\bz$ meaning that $\ba_2+\bl\not\ge\ba_1$, as desired.

    Conversely, suppose that $\ba_2+\bl\not\ge\ba_1$. Then we choose $\lambda\in\bbN$ big enough and coprime with $\ell$, and we define the root $\lambda\bz,\lambda\bz+\bl$. This gives us a sequence of roots, infinite on the right, with limit direction $\bl^+=\bl$. Note also that changing $\lambda$ by adding or subtracting $\ell$ will give the same sequence of roots (shifted by one). Since there are finitely many (and at least one) residues modulo $\ell$ which are coprime with $\ell$, we obtain that there are finitely many sequences of roots, infinite on the right, with limit direction $\bl^+=\bl$.

    Suppose now that $\bl$ is the limit angle of a sequence of roots finite on the right. Let $\lambda\bz,\mu\bz$ be the last root of the sequence, for some integers $\lambda,\mu\ge1$ with $(\lambda,\mu)=1$. This must fall into \Cref{itm:2i} of \Cref{prop:binary-trees}, meaning that $\ba_1+\lambda\bz\not\ge\ba_2$. In order for the sequence to have limit directions, we must also have $\ba_2+\lambda\bz\ge\ba_1$. In this case the limit direction of the sequence of roots is $\ell^+=\lambda\bz$, meaning that $\lambda=\ell$, as desired.

    Conversely, suppose that $\ba_2+\bl\ge\ba_1$ and $\ba_1+\bl\not\ge\ba_2$. Then we choose $\lambda\in\bbN$ big enough and coprime with $\ell$, and we define the configuration $\ell\bz,\lambda\bz$. This gives us a sequence of roots, finite on the right, with limit direction $\bl^+=\ell$. Note also that changing $\lambda$ by adding or subtracting $\ell$ gives us the same sequence of roots. Since there are finitely many (and at least one) residues modulo $\ell$ which are coprime with $\ell$, we obtain that there are finitely many sequences of roots, infinite on the right, with limit direction $\bl^+=\bl$.
    
    In the case where $\gen{\bh_1,\bh_2}\cong\bbZ\oplus\bbZ/2\bbZ$, we consider the element $\bepsilon\in\bA$ of order two, and we write $\bl+\bepsilon=\ell\bz$ with $\ell\ge1$ maximum possible. We must have that $\gen{\bz,\bepsilon}=\gen{\bh_1,\bh_2}$ (otherwise $\bl$ is not part of a basis for $\gen{\bh_1,\bh_2}$), and all components of $\bz$ are strictly positive. Note that $a\bz+b\bepsilon,c\bz+d\bepsilon$ is a basis for $\gen{\bh_1,\bh_2}$ if and only if $a,c$ are coprime and the determinant $ad-bc$ is odd. Using this, the proof proceeds the same as before.
\end{proof}

\begin{cor}\label{cor:rank-1-limit-directions-discrete}
    Let $\bh_1,\bh_2\in\bA$ be such that $\gen{\bh_1,\bh_2}\cong\bbZ$ or $\bbZ\oplus\bbZ/2\bbZ$. Then there are finitely many isomorphism classes of full-tree configurations $(C)$ with vectors Nielsen equivalent to $\bh_1,\bh_2$. Moreover, this finite set of isomorphism classes can be algorithmically computed.
\end{cor}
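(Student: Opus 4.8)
The plan is to parametrize the isomorphism classes of full-tree configurations by the \emph{sequences of roots} of \Cref{sec:sequence-of-roots} and to bound their number via the realizable limit directions of \Cref{prop:rank-1-realizing-limit-directions}. Choosing the generator appropriately, I may assume $\gen{\bh_1,\bh_2}=\gen{\bz}$ with $\bz\ge\mathbf{0}$ (resp. $\gen{\bz,\bepsilon}$ in the $\bbZ\oplus\bbZ/2\bbZ$ case); if no generator is $\ge\mathbf{0}$ there is no full-tree configuration at all and the claim is vacuous. Every full-tree configuration is an iterated son of a root, and the full-tree configurations obtainable from it by slides and connections are exactly those lying in the binary trees over its sequence of roots (staying full-tree by \Cref{lem:connection-above}). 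Thus each isomorphism class of full-tree configurations is contained in a single sequence of roots, and it suffices to show that only finitely many sequences of roots carry a full-tree configuration, and that they can be listed.

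Consider first the sequences whose roots are non-twin. Such a sequence has a well-defined right limit direction $\bl^+$: our configuration is itself a full-tree iterated son of one of its roots, so some root of the sequence has a full-tree iterated son, which excludes the undefined case of \Cref{def:limit-directions}. By \Cref{prop:rank-1-realizing-limit-directions} the element $\bl^+$ is a realizable limit direction, hence falls into case~(1) or~(2) of that proposition; in particular it is \emph{not} the case that both $\ba_2+\bl^+\ge\ba_1$ and $\ba_1+\bl^+\ge\ba_2$, and $\bl^+$ is strictly positive on $\qcsupp{\ba_1-\ba_2}$. This forces $\bz$ to be strictly positive on $\qcsupp{\ba_1-\ba_2}$ (otherwise no non-twin full-tree class exists, and we are done). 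Writing $\bl^+=\ell\bz$ (resp. $\bl^++\bepsilon=\ell\bz$) with $\ell\ge1$, strict positivity of $\bz$ gives an explicit bound $L$ with $\ell\bz\ge\abs{\ba_1-\ba_2}$ componentwise on $\qcsupp{\ba_1-\ba_2}$ for all $\ell>L$; for such $\ell$ we would be in case~(3) of \Cref{prop:rank-1-realizing-limit-directions}, a contradiction. Hence $\ell\le L$, so only finitely many values of $\bl^+$ occur.

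By \Cref{prop:rank-1-realizing-limit-directions}, each realizable $\bl^+$ is the right limit direction of only finitely many sequences of roots; summing over the finitely many admissible $\bl^+$ bounds the number of non-twin full-tree classes. I note that the exceptional sequences of \Cref{rmk:classification-roots-without-limit-directions} contribute nothing, since a sequence with no limit direction has no root with a full-tree iterated son and hence contains no full-tree configuration. The twin-root classes are handled separately but by the same positivity estimate: twin roots admit no connection (by the remark preceding \Cref{prop:sequence-of-roots}), so each such class is a single pair of twin trees, and the twin-root relation $\bx_1+\lambda\ba_1=\bx_2+\lambda\ba_2+\lambda\be$ forces $\ba_1-\ba_2$ into $\gen{\bz}$ modulo $\bbZ/2\bbZ$ and, together with $\bz>\mathbf{0}$ on $\qcsupp{\ba_1-\ba_2}$ and the full-tree requirement, bounds $\lambda$ and the vectors, leaving finitely many. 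All the quantities involved—the constant $L$, the relevant coprime residues, the finitely many sequences attached to each $\bl^+$, and the twin-root parameters—are explicit, so the complete finite list of isomorphism classes can be enumerated, giving the algorithmic statement. I expect the main obstacle to be not a new idea but the bookkeeping: confirming cleanly that each sequence of roots meets a single isomorphism class, and folding the twin-root case into the same finiteness bound as the generic case.
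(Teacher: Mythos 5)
Your argument is correct and is essentially the one the paper intends: the corollary is left without an explicit proof as an immediate consequence of \Cref{prop:rank-1-realizing-limit-directions}, and your two steps---bounding the multiple $\ell$ in $\bl^+=\ell\bz$ by the failure of case (3) of that proposition, then invoking the finiteness of the number of sequences of roots attached to each realizable limit direction---are exactly what makes it immediate. The only remark worth adding is that your twin-root case is in fact vacuous in this rank-$1$ setting, since $\ba_1-\ba_2$ has components of both signs and hence cannot lie in $\gen{\bz}$ modulo the torsion part when $\bz\ge\mathbf{0}$, so your coarser bound there is harmless but unnecessary.
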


\subsection{Examples}\label{sec:examples}

For simplicity, we consider only examples $(\Gamma,\psi)$ where all the labels on all the edges are positive. We use $\bA=\bbZ^{\cP(\Gamma,\psi)}$, omitting the $\bbZ/2\bbZ$ summand. We ignore sign-change moves.

\begin{ex}\label{ex7}
Consider the GBS graph $(\Gamma,\psi)$ with one vertex and two edges, as in \Cref{fig:ex7-GBSgraph}, and call $\Lambda$ its affine representation. We have that $\cP(\Gamma,\psi)=\{2,3\}$. The only vertex of $\Gamma$ corresponds to a single copy of $\pA$ in $\Lambda$. The two edges belong to a common quasi-conjugacy class, with two minimal regions, corresponding to the points $\ba_1,\ba_2\in\pA$ with $\ba_1=(0,3)$ and $\ba_2=(3,0)$. The affine representation has edges $(0,3)\edge(11,13)$ and $(3,0)\edge(18,13)$.

\begin{figure}[H]
\centering
\includegraphics[width=\textwidth]{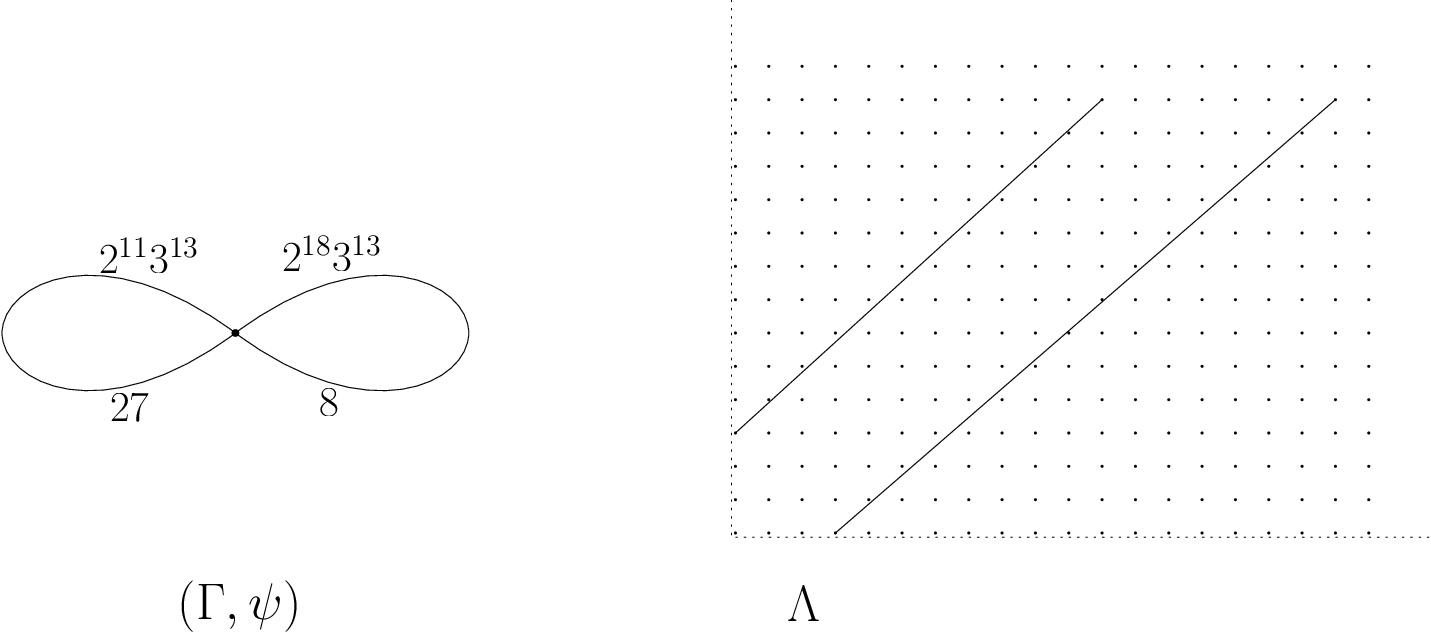}
\centering
\caption{The GBS graph $(\Gamma,\psi)$ of \Cref{ex7} and the corresponding affine representation $\Lambda$.}
\label{fig:ex7-GBSgraph}
\end{figure}

This gives us a configuration $(C)$ with vectors $\bx_1=(11,10),\bx_2=(15,13)$. It is fairly easy to compute the root $(R)$ of this configuration, which has vectors $\bv_1=(3,4),\bv_2=(4,3)$. One readily checks that $(C)=(R112)$, see \Cref{fig:ex7-configuration}.
\[
    (R)=\begin{cases}
        (0,3)\edge(0,3)+(3,4)\\
        (3,0)\edge(3,0)+(4,3)
    \end{cases}
    \qquad
    (C)=(R112)=\begin{cases}
        (0,3)\edge(0,3)+(11,10)\\
        (3,0)\edge(3,0)+(15,13)
    \end{cases}
\]

\begin{figure}[H]
\centering
\includegraphics[width=0.9\textwidth]{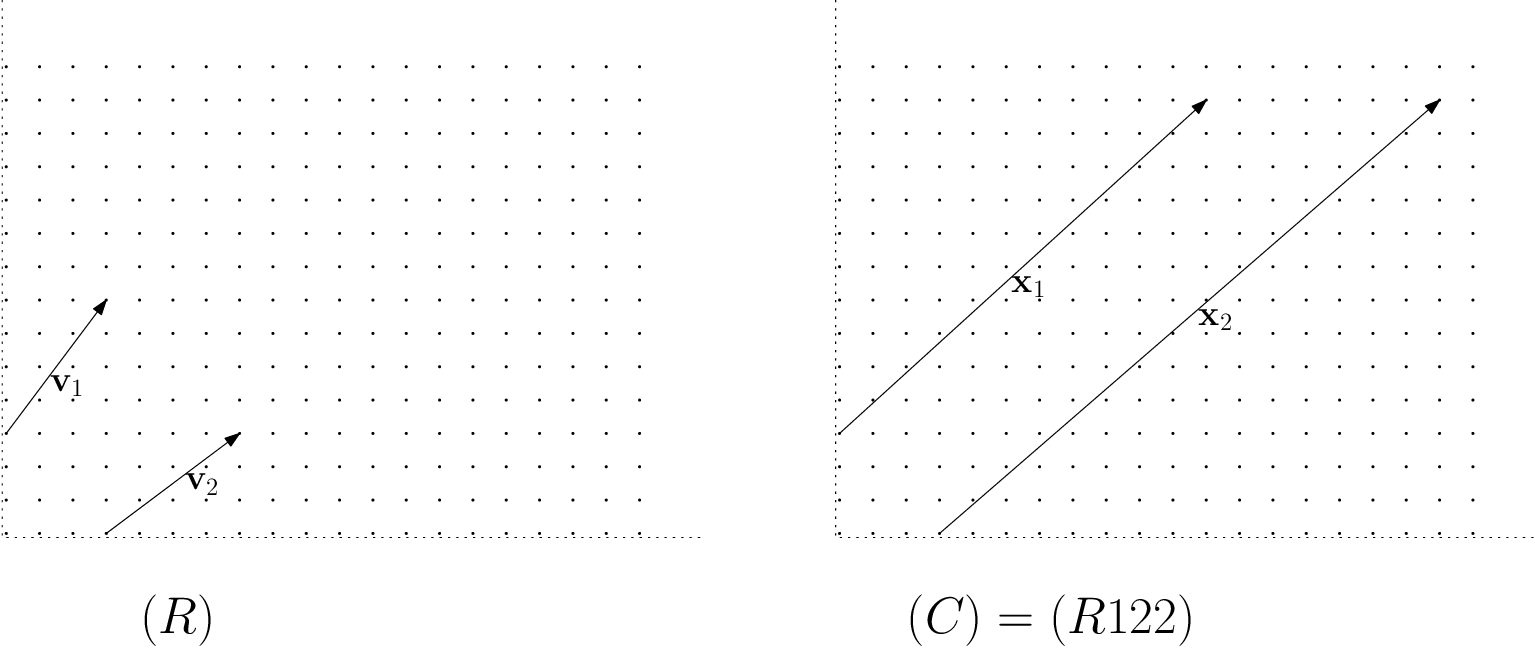}
\centering
\caption{The root $(R)$ (with vectors $\bv_1,\bv_2$) for the configuration $(C)=(R112)$ (with vectors $\bx_1,\bx_2$) of our initial GBS graph.}
\label{fig:ex7-configuration}
\end{figure}

In the subsequent \Cref{fig:ex7-angles}, we can observe concretely the effect of \Cref{prop:angles}. The picture is \emph{not} the affine representation; instead, we place all the vectors $\bv_1,\bv_2,\bx_1,\bx_2$ at the origin of the Euclidean plane $\bbR^2$, and we put in evidence the directions to which the vectors are pointing, and the \textit{angles} spanned by two of them. This shows explicitly how taking sons corresponds to making the angle narrower. The roots can ``see" more configurations, as they correspond to wider angles.

\begin{figure}[H]
\centering
\includegraphics[width=\textwidth]{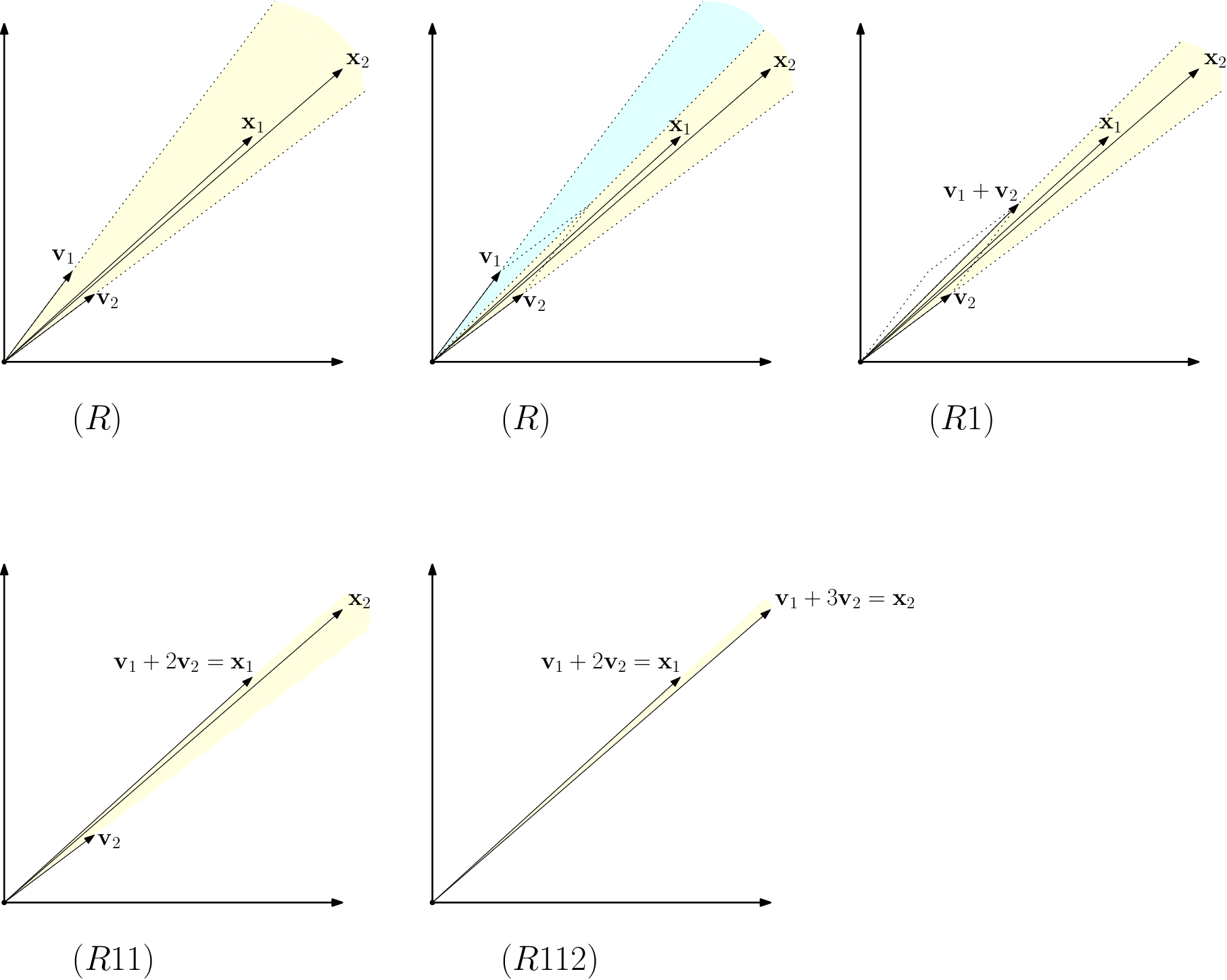}
\centering
\caption{In the picture, we see the vectors $\bv_1,\bv_2,\bx_1,\bx_2$ inside the Euclidean plane $\bbR^2$. We point out that this is NOT the affine representation; here we place the initial points of the vectors at the origin of the Euclidean plane, and we are mainly interested in the directions in which the vectors point. This should be thought as a graphical representation of the procedure described in the proof of \Cref{prop:angles}. \\
In the first image (top left), we see our initial vectors $\bv_1,\bv_2$ defining a certain angle, and the two ``target vectors" $\bx_1,\bx_2$ are contained inside this angle. In the second image (top middle), we draw a parallelogram, whose diagonal corresponds to the vector $\bv_1+\bv_2$, which divides the angle into two parts; the target vectors $\bx_1,\bx_2$ are contained in one of the two halves. In the third image (top right), we choose, among the two halves of the angle, the one containing the two target vectors $\bx_1,\bx_2$; this means that we have to choose the first son, changing $\bv_1$ into $\bv_1+\bv_2$ and keeping $\bv_2$ as it is. The fourth and the fifth image (bottom) show the subsequent iteration of the same procedure; at each step, the angle becomes narrower, but it keeps containing the two target vectors $\bx_1,\bx_2$. At the end of the procedure, we obtain exactly the desired configuration.}
\label{fig:ex7-angles}
\end{figure}

For the root $(R)$ we can explicitly compute the sequence $\dots,\bv_{-1},\bv_0,\bv_1,\bv_2,\bv_3,\dots$ as in \Cref{sec:sequence-of-roots}. This is infinite in both directions, and it is summarized in the following table:

\vspace{0.2cm}

\noindent
\begin{tabular}{p{3cm} | C{0.7cm} | C{0.7cm} | C{0.7cm} | C{0.7cm} | C{0.7cm} | C{0.7cm} | C{0.7cm} | C{0.7cm} | C{0.7cm} | C{0.7cm}}
    {\footnotesize vector} & \dots & $\bv_{-2}$ & $\bv_{-1}$ & $\bv_0$ & $\bv_1$ & $\bv_2$ & $\bv_3$ & $\bv_4$ & $\bv_5$ & \dots \\
    \hline
    {\footnotesize component $p=2$} & \dots & 9 & 7 & 5 & 3 & 4 & 9 & 14 & 19 & \dots \\
    \hline
    {\footnotesize component $p=3$} & \dots & 19 & 14 & 9 & 4 & 3 & 5 & 7 & 9 & \dots \\
    \hline
    {\footnotesize $k$ of the connection} & \dots & \scriptsize $k=2$ & \scriptsize $k=2$ & \scriptsize $k=2$ & \scriptsize $k=3$ & \scriptsize $k=3$ & \scriptsize $k=2$ & \scriptsize $k=2$ & \scriptsize $k=2$ &  \dots \\
    \hline
\end{tabular}

\vspace{0.2cm}

\noindent
In the first three rows we can see the list of the vectors, and their respective first components and second components. In the fourth row, we can see the parameter $k\ge2$ for the connection, satisfying $\bv_{i-1}+\bv_{i+1}=k\bv_i$ for $i\in\bbZ$. In order to compute the next term $\bv_{i+1}$ in the sequence, we just choose the minimum value of $k\ge2$ such that the next edge $\bv_{i+1}$ satisfies $\bv_{i+1}\ge\ba_1-\ba_2$; and similarly to extend the sequence on the left. The sequence is an arithmetic progression, except at the (finitely many) singular points where $k\not=2$ (and in fact, the reader can see the arithmetic progressions $4,9,14,19,\dots$ and $3,5,7,9,\dots$ in the table); the limit directions are $\bl^-=(2,5)$ and $\bl^+=(5,2)$. This is sufficient to describe the isomorphism problem for the given GBS group: the configurations that can be reached by means of slides, swaps, connections are exactly the iterated sons of the roots in the sequence above.

We are now interested about all configurations $(C)$ with minimal points $\ba_1,\ba_2$ and vectors $\by_1,\by_2$ such that $\gen{\by_1,\by_2}=\gen{(3,4),(4,3)}$ and $\by_1,\by_2$ is Nielsen equivalent to $(3,4),(4,3)$. By \Cref{prop:realizing-limit-directions} we know that all $\bl\in\gen{(3,4),(4,3)}$ which are part of a basis, all of whose components are strictly positive, and with at least one component $<3$ (because $\abs{\ba_1-\ba_2}=(3,3)$) can be realized as limit direction of some configuration. The values of $\bl$ satisfying these conditions are $(7\ell+6,1),(14\ell+5,2),(2,14\ell+5),(1,7\ell+6)$ for $\ell\ge0$ integer (see \Cref{thm:limit-directions-discrete}). In \Cref{fig:ex7-limit-angles} we can see all of these directions represented in the plane; as long as we stay far away from the axes, this is a discrete set. Between each consecutive pair of directions, lies exactly one isomorphism class of GBS groups, corresponding to the gray regions represented in \Cref{fig:ex7-limit-angles-2}. We point out that each gray region contains most of the configurations in a certain isomorphism class (to be precise, the full-tree ones) - but a few exceptional ones can fall outside. We also point out that there are a few isomorphism classes which do not contain any full-tree configuration, and thus are not represented by any region; these are described in \Cref{rmk:classification-roots-without-limit-directions}. We note that $\ba_1-\ba_2=(3,-3)$ belongs to $\gen{(3,4),(4,3)}$ but it is not part of a basis; thus in this case no twin roots can appear. For example, the following sequences of vectors (giving sequences of roots) correspond to different isomorphism classes of GBS groups.

\vspace{0.2cm}

\noindent
\begin{tabular}{p{3cm} | C{0.7cm} | C{0.7cm} || C{0.7cm} | C{0.7cm} | C{0.7cm} | C{0.7cm} | C{0.7cm} | C{0.7cm} | C{0.7cm} | C{0.7cm}}
    {\footnotesize vector} & $\bl^-$ &  & $\bv_0$ & $\bv_1$ & $\bv_2$ & $\bv_3$ & $\bv_4$ & \dots & & $\bl^+$ \\
    \hline
    {\footnotesize component $p=2$} & 5 &  & 4 & 5 & 11 & 17 & 23 & \dots & & 6 \\
    \hline
    {\footnotesize component $p=3$} & 2 &  & 3 & 2 & 3 & 4 & 5 & \dots & & 1 \\
    \hline
\end{tabular}

\vspace{0.3cm}

\noindent
\begin{tabular}{p{3cm} | C{0.7cm} | C{0.7cm} || C{0.7cm} | C{0.7cm} | C{0.7cm} | C{0.7cm} | C{0.7cm} | C{0.7cm} | C{0.7cm} | C{0.7cm}}
    {\footnotesize vector} & $\bl^-$ &  & $\bv_0$ & $\bv_1$ & $\bv_2$ & $\bv_3$ & $\bv_4$ & \dots & & $\bl^+$ \\
    \hline
    {\footnotesize component $p=2$} & 6 &  & 5 & 6 & 25 & 44 & 63 & \dots & & 19 \\
    \hline
    {\footnotesize component $p=3$} & 1 &  & 2 & 1 & 3 & 5 & 7 & \dots & & 2 \\
    \hline
\end{tabular}

\vspace{0.3cm}

\noindent
\begin{tabular}{p{3cm} | C{0.7cm} | C{0.7cm} || C{0.7cm} | C{0.7cm} | C{0.7cm} | C{0.7cm} | C{0.7cm} | C{0.7cm} | C{0.7cm} | C{0.7cm}}
    {\footnotesize vector} & $\bl^-$ &  & $\bv_0$ & $\bv_1$ & $\bv_2$ & $\bv_3$ & $\bv_4$ & \dots & & $\bl^+$ \\
    \hline
    {\footnotesize component $p=2$} & 19 &  & 6 & 19 & 32 & 54 & 58 & \dots & & 13 \\
    \hline
    {\footnotesize component $p=3$} & 2 &  & 1 & 2 & 3 & 4 & 5 & \dots & & 1 \\
    \hline
\end{tabular}

\vspace{0.3cm}

\noindent
\begin{tabular}{p{3cm} | C{0.7cm} | C{0.7cm} || C{0.7cm} | C{0.7cm} | C{0.7cm} | C{0.7cm} | C{0.7cm} | C{0.7cm} | C{0.7cm} | C{0.7cm}}
    {\footnotesize vector} & $\bl^-$ &  & $\bv_0$ & $\bv_1$ & $\bv_2$ & $\bv_3$ & $\bv_4$ & \dots & & $\bl^+$ \\
    \hline
    {\footnotesize component $p=2$} & 13 &  & 6 & 13 & 46 & 79 & 112 & \dots & & 33 \\
    \hline
    {\footnotesize component $p=3$} & 1 &  & 1 & 1 & 3 & 5 & 7 & \dots & & 2 \\
    \hline
\end{tabular}

\vspace{0.2cm}

\end{ex}

\begin{figure}[H]
\centering
\includegraphics[width=0.55\textwidth]{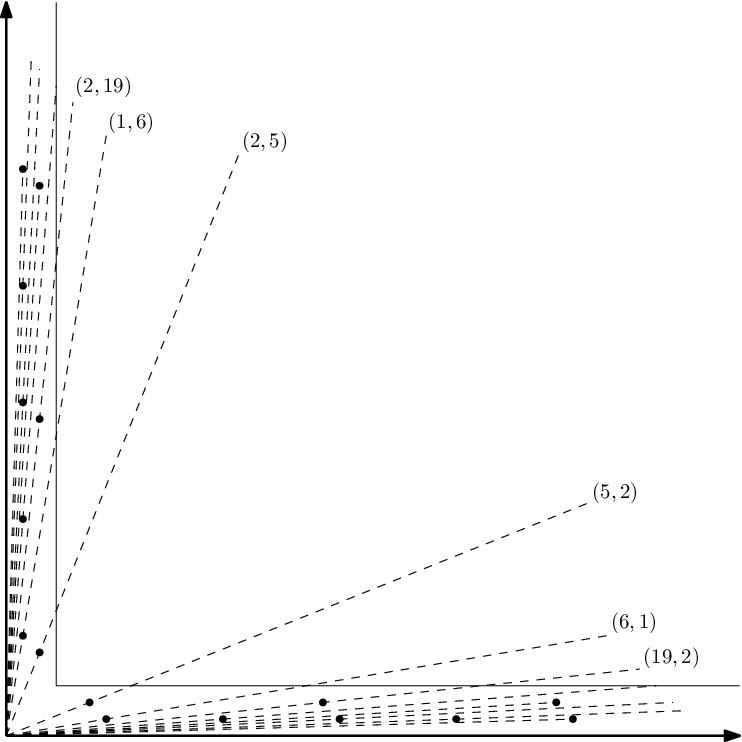}
\centering
\caption{The set of all possible limit directions for the subgroup $H=\gen{(4,3),(3,4)}$ as in \Cref{ex7}. The dots represents all the elements which are part of a basis for $H$, and which lie near enough to the two axes - we only consider elements which lie left/below of the line in the figure. For each dot, we draw the corresponding direction (the dashed lines), which can always be realized as limit direction of some GBS group. From the picture we can see that the points form a finite union of arithmetic progressions, accumulating only in the horizontal and vertical directions.}
\label{fig:ex7-limit-angles}
\end{figure}

\begin{figure}[H]
\centering
\includegraphics[width=0.55\textwidth]{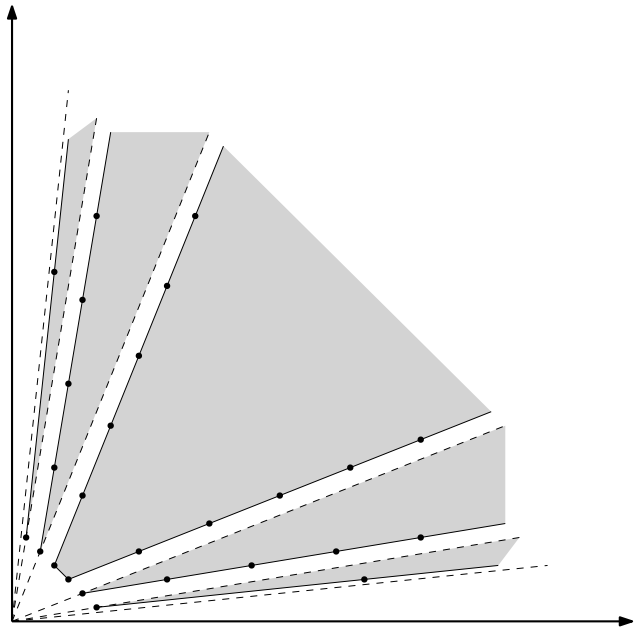}
\centering
\caption{The limit angles obtained from the limit directions represented in \Cref{fig:ex7-limit-angles}. Each gray region is an isomorphism class of GBS groups. Two consecutive points on the boundary of a gray region give us one of the roots for the binary trees - consecutive pairs of points being roots related by a connection move.}
\label{fig:ex7-limit-angles-2}
\end{figure}

\begin{ex}\label{ex11}
Consider the GBS graph $(\Gamma,\psi)$ with one vertex and two edges, as in \Cref{fig:ex11}. We call $\Lambda$ its affine representation, given by edges
\[
    \begin{cases}
        \ba_1\edge\ba_1+4\bz\\
        \ba_2\edge\ba_2+7\bz
    \end{cases}
\]
where $\ba_1=(0,5)$ and $\ba_2=(3,0)$ and $\bz=(2,1)$. Note that the subgroup generated by the two vectors is $\gen{4\bz,7\bz}=\gen{\bz}\cong\bbZ$. 

\begin{figure}[H]
\centering
\includegraphics[width=0.9\textwidth]{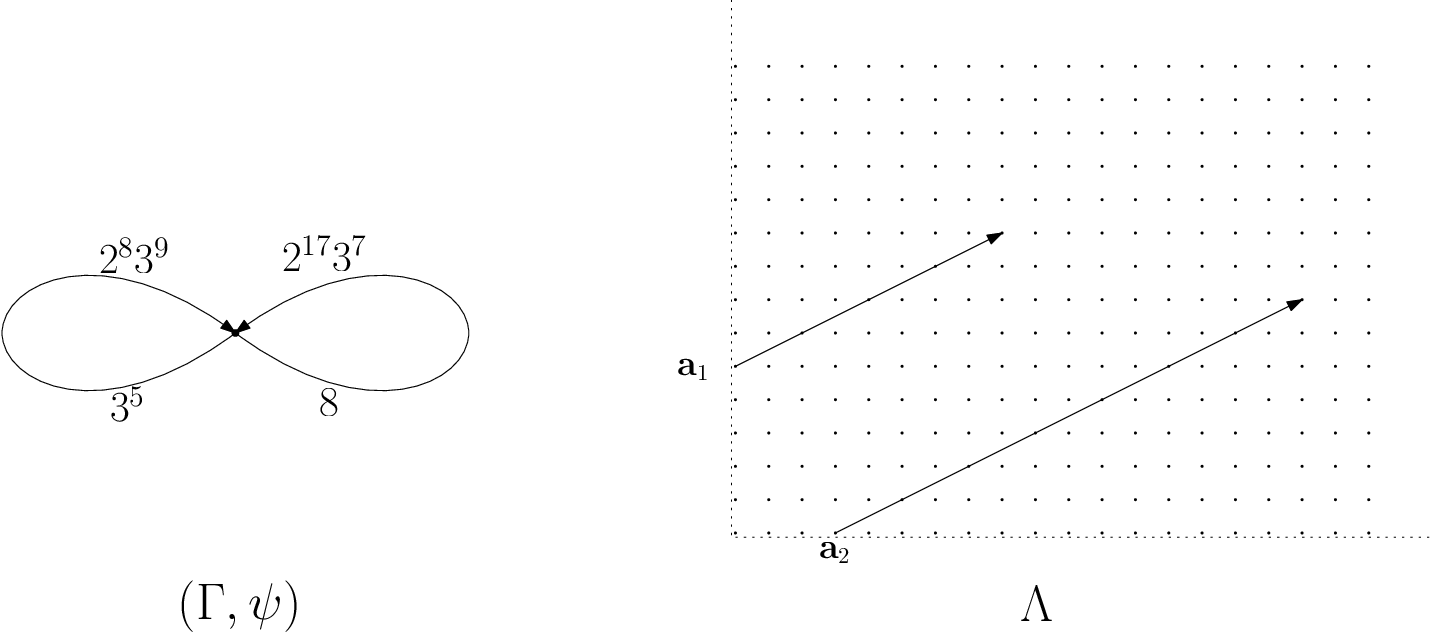}
\centering
\caption{The GBS graph $(\Gamma,\psi)$ of \Cref{ex11} and the corresponding affine representation $\Lambda$.}
\label{fig:ex11}
\end{figure}

It is fairly easy to compute the associated sequence of roots, which is summarized in the following table:

\begin{center}
\begin{tabular}{ || C{0.7cm} | C{0.7cm} | C{0.7cm} | C{0.7cm} | C{0.7cm} | C{0.7cm} | C{0.7cm} | C{0.7cm}}
    $\bv_{-1}$ & $\bv_0$ & $\bv_1$ & $\bv_2$ & $\bv_3$ & $\bv_4$ & $\bv_5$ & \dots \\
    \hline
    $5\bz$ & $4\bz$ & $7\bz$ & $10\bz$ & $13\bz$ & $16\bz$ & $19\bz$ & \dots \\
    \hline
\end{tabular}
\end{center}

\vspace{0.2cm}

We now consider all the configurations of the form
\[
    (C)=\begin{cases}
        \ba_1\edge\ba_1+k_1\bz\\
        \ba_2\edge\ba_2+k_2\bz
    \end{cases}
\]
for $k_1,k_2\ge1$ integers with $(k_1,k_2)=1$, and we want to divide them into isomorphism classes, as in \Cref{cor:rank-1-limit-directions-discrete}. The configuration $(C)$ is full-tree if and only if $k_1\ge2$ and $k_2\ge5$. The configuration $(C)$ is root if and only if $k_1\le k_2+1$ and $k_2\le k_1+4$. Thus, if we want to obtain a sequence of roots infinite on the left, then it eventually be an arithmetic progression of ratio between $-1$ and $+4$ (and with consecutive terms of the progression being coprime). 

There is a unique arithmetic progression of ratio $-1$, giving the sequence of roots

\begin{center}
\begin{tabular}{ C{0.7cm} | C{0.7cm} | C{0.7cm} | C{0.7cm} | C{0.7cm} | C{0.7cm} | C{0.7cm} | C{0.7cm} | C{0.7cm} | C{0.7cm}}
    \dots & $\bv_{-2}$ & $\bv_{-1}$ & $\bv_0$ & $\bv_1$ & $\bv_2$ & $\bv_3$ & $\bv_4$ & $\bv_5$ & \dots \\
    \hline
    \dots & $8\bz$ & $7\bz$ & $6\bz$ & $5\bz$ & $9\bz$ & $13\bz$ & $17\bz$ & $21\bz$ & \dots \\
    \hline
\end{tabular}
\end{center}

There is a unique arithmetic progression of ratio $+1$, giving the sequence of roots

\begin{center}
\begin{tabular}{ || C{0.7cm} | C{0.7cm} | C{0.7cm} | C{0.7cm} | C{0.7cm} | C{0.7cm} | C{0.7cm} | C{0.7cm}}
    $\bv_{-1}$ & $\bv_0$ & $\bv_1$ & $\bv_2$ & $\bv_3$ & $\bv_4$ & $\bv_5$ & \dots \\
    \hline
    $3\bz$ & $4\bz$ & $5\bz$ & $6\bz$ & $7\bz$ & $8\bz$ & $9\bz$ & \dots \\
    \hline
\end{tabular}
\end{center}

There is a unique arithmetic progression of ratio $+2$, giving the sequence of roots

\begin{center}
\begin{tabular}{ || C{0.7cm} | C{0.7cm} | C{0.7cm} | C{0.7cm} | C{0.7cm} | C{0.7cm} | C{0.7cm} | C{0.7cm}}
    $\bv_{-1}$ & $\bv_0$ & $\bv_1$ & $\bv_2$ & $\bv_3$ & $\bv_4$ & $\bv_5$ & \dots \\
    \hline
    $4\bz$ & $3\bz$ & $5\bz$ & $7\bz$ & $9\bz$ & $11\bz$ & $13\bz$ & \dots \\
    \hline
\end{tabular}
\end{center}

There are exactly two arithmetic progression of ratio $+3$, giving the sequences of roots

\begin{center}
\begin{tabular}{ || C{0.7cm} | C{0.7cm} | C{0.7cm} | C{0.7cm} | C{0.7cm} | C{0.7cm} | C{0.7cm} | C{0.7cm}}
    $\bv_{-1}$ & $\bv_0$ & $\bv_1$ & $\bv_2$ & $\bv_3$ & $\bv_4$ & $\bv_5$ & \dots \\
    \hline
    $5\bz$ & $4\bz$ & $7\bz$ & $10\bz$ & $13\bz$ & $16\bz$ & $19\bz$ & \dots \\
    \hline
\end{tabular}
\end{center}

\begin{center}
\begin{tabular}{ || C{0.7cm} | C{0.7cm} | C{0.7cm} | C{0.7cm} | C{0.7cm} | C{0.7cm} | C{0.7cm} | C{0.7cm}}
    $\bv_{-1}$ & $\bv_0$ & $\bv_1$ & $\bv_2$ & $\bv_3$ & $\bv_4$ & $\bv_5$ & \dots \\
    \hline
    $3\bz$ & $2\bz$ & $5\bz$ & $8\bz$ & $11\bz$ & $14\bz$ & $17\bz$ & \dots \\
    \hline
\end{tabular}
\end{center}

There are exactly two arithmetic progression of ratio $+4$. One of them has the same sequence of roots as the progression of ratio $-1$. The other gives the sequence of roots

\begin{center}
\begin{tabular}{ || C{0.7cm} | C{0.7cm} | C{0.7cm} | C{0.7cm} | C{0.7cm} | C{0.7cm} | C{0.7cm} | C{0.7cm}}
    $\bv_{-1}$ & $\bv_0$ & $\bv_1$ & $\bv_2$ & $\bv_3$ & $\bv_4$ & $\bv_5$ & \dots \\
    \hline
    $2\bz$ & $3\bz$ & $7\bz$ & $11\bz$ & $15\bz$ & $19\bz$ & $23\bz$ & \dots \\
    \hline
\end{tabular}
\end{center}

This gives a total of $6$ isomorphism classes of GBSs. There are also other isomorphism classes, but their isomorphism problems do not contain any full-tree configuration (and so are much easier to describe).
\end{ex}

\bibliographystyle{alpha}

\begin{thebibliography}{ACRK25b}
	
	\bibitem[ACRK25a]{ACK-out}
	D.~Ascari, M.~Casals-Ruiz, and I.~Kazachkov.
	\newblock {Automorphisms of graphs of groups}, 2025.
	\newblock preprint.
	
	\bibitem[ACRK25b]{ACK-iso1}
	D.~Ascari, M.~Casals-Ruiz, and I.~Kazachkov.
	\newblock {On the isomorphism problem for cyclic JSJ decompositions: vertex elimination}, 2025.
	\newblock preprint.
	
	\bibitem[ACRK25c]{ACK-iso2}
	D.~Ascari, M.~Casals-Ruiz, and I.~Kazachkov.
	\newblock {On the isomorphism problem for generalized Baumslag-Solitar groups: invariants and flexible configurations}, 2025.
	\newblock preprint.
	
	\bibitem[CF08]{CF08}
	M.~Clay and M.~Forester.
	\newblock On the isomorphism problem for generalized {B}aumslag-{S}olitar groups.
	\newblock {\em Algebr. Geom. Topol.}, 8(4):2289--2322, 2008.
	
	\bibitem[CRKZ21]{CRKZ21}
	M.~Casals-Ruiz, I.~Kazachkov, and A.~Zakharov.
	\newblock Commensurability of {B}aumslag-{S}olitar groups.
	\newblock {\em Indiana Univ. Math. J.}, 70(6):2527--2555, 2021.
	
	\bibitem[DG11]{DG11}
	F.~Dahmani and V.~Guirardel.
	\newblock The isomorphism problem for all hyperbolic groups.
	\newblock {\em Geom. Funct. Anal.}, 21(2):223--300, 2011.
	
	\bibitem[DT19]{DT19}
	F.~Dahmani and N.~Touikan.
	\newblock Deciding isomorphy using {D}ehn fillings, the splitting case.
	\newblock {\em Invent. Math.}, 215(1):81--169, 2019.
	
	\bibitem[Dud17]{Dud17}
	F.~A. Dudkin.
	\newblock The isomorphism problem for generalized {B}aumslag-{S}olitar groups with one mobile edge.
	\newblock {\em Algebra Logika}, 56(3):300--316, 2017.
	
	\bibitem[For06]{For06}
	M.~Forester.
	\newblock Splittings of generalized {B}aumslag-{S}olitar groups.
	\newblock {\em Geom. Dedicata}, 121:43--59, 2006.
	
	\bibitem[Lev07]{Lev07}
	G.~Levitt.
	\newblock On the automorphism group of generalized {B}aumslag-{S}olitar groups.
	\newblock {\em Geom. Topol.}, 11:473--515, 2007.
	
	\bibitem[Sel95]{Sel95}
	Z.~Sela.
	\newblock The isomorphism problem for hyperbolic groups. {I}.
	\newblock {\em Ann. of Math. (2)}, 141(2):217--283, 1995.
	
	\bibitem[Ser77]{Ser77}
	J.~P. Serre.
	\newblock {\em Arbres, amalgames, {${\rm SL}\sb{2}$}}, volume No. 46 of {\em Ast\'erisque}.
	\newblock Soci\'et\'e{} Math\'ematique de France, Paris, 1977.
	\newblock Avec un sommaire anglais, R\'edig\'e{} avec la collaboration de Hyman Bass.
	
	\bibitem[Wan25]{Wan25}
	D.~Wang.
	\newblock The isomorphism problem for small-rose generalized {B}aumslag-{S}olitar groups.
	\newblock {\em J. Algebra}, 661:193--217, 2025.
	
\end{thebibliography}
\footnotesize

\end{document}